\def\cequiv{\raisebox{-1.5mm}{$\;\stackrel{\raisebox{-3.9mm}{=}}{{\sim}}\;$}}
\def\usigma{\undertilde{\sigma}}
\def\utau{\undertilde{\tau}}
\def\uphi{\undertilde{\varphi}}
\def\upsi{\undertilde{\psi}}
\def\ueta{\undertilde{\eta}}
\def\uzeta{\undertilde{\zeta}}
\def\uf{\undertilde{f}}
\def\uu{\undertilde{u}}
\def\uw{\undertilde{w}}
\def\uv{\undertilde{v}}
\def\ux{\undertilde{x}}
\def\curl{{\rm curl}}
\def\dv{{\rm div}}
\newtheorem{theorem}{Theorem}
\newtheorem{remark}[theorem]{Remark}
\newtheorem{proposition}[theorem]{Proposition}
\newtheorem{lemma}[theorem]{Lemma}
\newtheorem{definition}[theorem]{Definition}
\newtheorem{example}[theorem]{Example}
\begin{document}

\title[Order reduced method for fourth order problems]{Regular decomposition and a framework of order reducd methods for fourth order problems}
\author{
{\sc Shuo Zhang}}
 \address{LSEC, Institute of Computational Mathematics and Scientific/Engineering Computing, Academy of Mathematics and Systems Science, Chinese Academy of Sciences. No 55, Zhongguancun Donglu, Beijing, 100190, China PR. Email: szhang@lsec.cc.ac.cn.
}
\subjclass[2000]{35J35,65N30}

\keywords{fourth order problem, order reduced method, regular decomposition}

\begin{abstract}
This paper is devoted to the construction of order reduced method of fourth order problems. A framework is presented such that a problem on a high-regularity space can be deduced in a constructive way to an equivalent problem on three low-regularity spaces which are connected by a regular decomposition, which is corresponding to a decomposition of the figuration of the regularity of the high order space. The framework is fit for various fourth order problems, and the numerical schemes based on the deduced problems can be of lower complicacy.  Two fourth order problems in three dimensional are discussed under the framework. They are each corresponding to a regular decomposition, and thus are discretised based on the discretised analogues of the regular decompositions constructed; optimal error estimates are given. 
\end{abstract}

\maketitle

\section{Introduction}

Fourth order elliptic problems are frequently encountered in applied sciences. Various model problems fall into this category, including many ones arising from elasticity, electromagnetics, magnetohydrodynamics,  acoustics and etc.. Their discretisations are of theoretical and practical importance. 

The fourth order problems in primal formulations have been widely discussed. Many kinds of conforming and nonconforming finite element methods for second order Sobolev spaces are designed. We refer to, e.g., \cite{Ciarlet.P1978} and \cite{Zenivsek.A1973,Zhang.Shangyou2009,Wang.M;Shi.Z;Xu.J2007NM,Wang.M;Shi.Z;Xu.J2007JCM,Wang.M;Xu.J2007,Tai.X;Winther.R2006,Neilan.M2015,Zheng.B;Hu.Q;Xu.J2011} for a few instances. These elements are designed carefully, and work for certain problems. Particularly, the 2D Morley element for biharmonic equation is generalised by \cite{Wang.M;Xu.J2006} to arbitrary dimensions and then by \cite{Wang.M;Xu.J2013} to arbitrary $(-\Delta)^m$ problem in $n$ dimension with $n\geqslant m$. However, these elements are usually high-degree and designed case by case,  especially in high dimensions. Also, the discretised problems often possess complicated structure and are not easy to implement or to solve. 

An alternative way is to transform the fourth order problems to order reduced formulations. Generally, this is to construct a system on low-regularity spaces by introducing auxiliary variables, and then discretize the generated system with numerical schemes. In this paper, we present a framework of construction of order reduced formulations for fourth order problems. We start with a basic observation that a Sobolev space of second order usually consists of functions that are in some first-order Sobolev space and whose first-order derivatives are belong to some first-order space. A framework is established accordingly with constructive presentation and mathematical analysis. In the framework, an original problem is transformed to a system on three spaces which are connected by a stable decomposition, and the well-define-ness of the three spaces, the well-posedness of the generated system and the equivalence between the generated system and the primal problem are guaranteed. The discretisation of the system needs three finite element spaces of low degree which are connected by some stable decomposition. Once such a stable decomposition can be established, not only a discretisation is given, but also an optimal preconditioner for the generated system can be constructed under the framework of fast auxiliary space preconditioning(FASP)(\cite{Xu.J1996as,Hiptmair.R;Xu.J2007,Xu.J2010icm,Zhang.S;Xu.J2014}) at the computational cost of solving two problems defined on two of the discretised spaces, respectively. 

Principally, the framework could be fit for various fourth order problems where a ``configurated" condition is verified; two specific examples are given in this paper. The first example is the bi-Laplacian equation in three dimensional. The equation is one of fundamental model problems in applied mathematics arising in, e.g., the linear elasticity model in the formulation of Galerkin vector (c.f. \cite{Gurtin.M1973}) and in the transmission eigenvalue problem (c.f. \cite{Colton.D;Monk.P1988,Kirsch.A1986}) in acoustics.  The second example is a fourth order curl equation in three dimensional, which arises from MHD and Maxwell transmission eigenvalue problem(c.f. \cite{Cakoni.F;Haddar.H2007,Zheng.B;Hu.Q;Xu.J2011}). The two problems are each connected to a regular decomposition, and are each transformed to a system on three first-order spaces. By constructing discretised analogues of the regular decompositions, we present discretizations for the generated systems, and prove the optimal convergence rate. 

The framework presented does not try to survey existing order reduced elements. Actually, for various fourth order problems, different order reduced methods have been designed case by case. For example, many of the methods are for the biharmonic equation, including the $u\sim\Delta u$ formulation often known as Ciarlet-Raviart's scheme (\cite{Ciarlet.P;Raviart.P1974}), the $u\sim\nabla^2u$ formulation (related to Hellan-Hermann-Johnson scheme, \cite{Hermann.L1967,Johnson.C1973,Hellan.K1967}), the $u\sim\nabla u\sim\nabla^2 u\sim\dv\nabla^2u$ formulation (Behrens-Guzman's scheme, \cite{Behrens.E;Guzman.J2011}) and the two-dimensional stream Stokes formulation (c.f. Section 5.2 of \cite{GiraultRaviart1986}). These methods are practically useful for discretising biharmonic equation. When taking two dimensional biharmonic equation into consideration, the formulation generated under our framework can be viewed different from but relevant to the stream Stokes formulation which is also used in \cite{Zhang.S;Xi.Y;Ji.X2016} for multilevel scheme for biharmonic eigenvalue problem, and different from others.

There are also numerical schemes designed for fourth order problems which aim to reduce the degree of polynomials used, such as the discontinuous Galerkin(dG) method (c.f., e.g.,\cite{Brenner.S;Monk.P;Sun.J2015,Brenner.S;Sung.L2005,Georgoulis.E;Houston.P2009}) and the weak Galerkin(wG) method (c.f., e.g., \cite{Zhang.R;Zhai.Q2015,Mu.L;Wang.J;Wang.Y;Ye.X2013,Wang.C;Wang.J2014}) for biharmonic equations. These methods still consider the approximation of second order spaces element by element; they use kinds of stabilisations for some flexibility, and the bilinear forms are usually mesh-dependent. We do not discuss them much in the present paper. 

Finally we remark that, as the order reduced formulations presented in this paper are equivalent to the primal formulations, it could be natural to expect equivalence between some discretised problem in order reduced formulation and some discretised problem in primal formulation. Also, their comparison and cooperation can be of interests and be discussed in future.

The remaining of the paper is organised as follows. Some notation is given in the remaining part of this section. In Section \ref{sec:abs}, an order reduction process is presented as a framework of the paper. In Section \ref{sec:rds}, two examples of discretized regular decompositions are given. In Sections \ref{sec:mixbl} and \ref{sec:mixfoc}, the framework is used for three dimensional bi-Laplacian equation and fourth order curl equation by the aid of the discretized regular decompositions. Finally, some concluding remarks are given in Section \ref{sec:con}. 

In this paper, we apply the following notations. Let $\Omega$ be a simply connected polyhedron domain, and $\Gamma=\partial\Omega$ be the boundary of $\Omega$. We use $H^2_0(\Omega)$, $H^1_0(\Omega)$, $H_0(\curl,\Omega)$ and $H_0(\dv,\Omega)$ for certain Sobolev spaces as usual, and specifically, denote  $\displaystyle L^2_0(\Omega):=\{w\in L^2(\Omega):\int_\Omega w dx=0\}$, $\undertilde{H}{}^1_0(\Omega):=(H^1_0(\Omega))^3$, $N_0(\curl,\Omega):=\{\ueta\in H_0(\curl,\Omega), (\ueta,\nabla s)=0\ \forall\,s\in H^1_0(\Omega)\}$, $\mathring{H}_0(\dv,\Omega):=\{\utau\in H_0(\dv,\Omega):\dv\utau=0\}$, and $\undertilde{H}{}^1_0(\curl,\Omega):=\{\uv\in H_0(\curl,\Omega):\nabla\times\uv\in \undertilde{H}{}^1_0(\Omega)\}$.  We use $``\undertilde{~}"$ for vector valued quantities in the present paper. We use $(\cdot,\cdot)$ for $L^2$ inner product and $\langle\cdot,\cdot\rangle$ for the duality between a space and its dual. Without ambiguity, $\langle\cdot,\cdot\rangle$ can occasionally be treated as $L^2$ inner product for certain functions. Finally, $\lesssim$, $\gtrsim$, and $\cequiv$ respectively denote $\leqslant$, $\geqslant$, and $=$ up to a constant. The hidden constants depend on the domain, and, when triangulation is involved, they also depend on the shape-regularity of the triangulation, but they do not depend on $h$ or any other mesh parameter.

\section{Order reduced method for high order problem}
\label{sec:abs}

Let $V$ be a Hilbert function space, and $a(\cdot,\cdot)$ be equivalently an inner product on $V$. We consider the variational problem: given $f\in V'$, find $u\in V$, such that 
\begin{equation}\label{eq:primal}
a(u,v)=\langle f,v\rangle,\quad\forall\,v\in V.
\end{equation}

In many applications, $V$ can be some Sobolev space of second order, and \eqref{eq:primal} is a fourth order problem. In this section, we present an order reduction framework for fourth order problems.

\subsection{Configurable triple}
\begin{lemma}
Let $R$, $S$ and $H$ be three Hilbert spaces such that $S\subset H$ continuously, and let  $B$ be a closed operator that maps $R$ into $H$. Define $W:=\{w\in R:Bw\in S\}$. 
\begin{enumerate}
\item $W$ is a Hilbert space with respect to the norm 
\begin{equation}
\|w\|_W:=\|w\|_{R}+\|Bw\|_{S}.
\end{equation}
\item Given $f\in W'$, there are $f_1\in R'$ and $f_2\in S'$, such that $\|f\|_{W'}\geqslant C (\|f_1\|_{R'}+\|f_2\|_{S'})$, and $\langle f,w\rangle=\langle f_1,w\rangle+\langle f_2,Bw\rangle$.
\end{enumerate}
\end{lemma}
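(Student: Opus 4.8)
My plan is to treat the two parts separately, with the bulk of the work being essentially a packaging of standard facts about closed operators on Hilbert spaces.

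For part (1), I would first check that $\|\cdot\|_W$ is genuinely a norm on $W$, which is immediate since $\|w\|_R$ already is one on $R\supset W$. The real content is completeness. Let $(w_n)$ be Cauchy in $\|\cdot\|_W$; then $(w_n)$ is Cauchy in $R$ and $(Bw_n)$ is Cauchy in $S$, hence (since $S\subset H$ continuously) also in $H$. By completeness of $R$ and $S$ there are limits $w\in R$ and $g\in S$ with $w_n\to w$ in $R$ and $Bw_n\to g$ in $S$, hence in $H$. Now I invoke the hypothesis that $B$ is a closed operator $R\to H$: since $w_n\to w$ in $R$ and $Bw_n\to g$ in $H$, we get $w\in\mathrm{dom}(B)$ and $Bw=g$. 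Thus $Bw=g\in S$, so $w\in W$, and $\|w_n-w\|_W=\|w_n-w\|_R+\|Bw_n-g\|_S\to 0$. This gives completeness. (A small point to be careful about: $W$ should be read as contained in $\mathrm{dom}(B)$, so that $Bw$ makes sense for $w\in W$; the definition $W=\{w\in R: Bw\in S\}$ already presupposes this.)

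For part (2), the natural approach is to realise $W$ isometrically as a closed subspace of the product Hilbert space $R\times S$ via the graph-type map $\iota\colon w\mapsto (w,Bw)$, which is bounded and bounded below by the very definition of $\|\cdot\|_W$ (here I use the equivalent Hilbert norm $(\|w\|_R^2+\|Bw\|_S^2)^{1/2}$, comparable to $\|w\|_W$ up to a factor of $2$). Given $f\in W'$, transport it along $\iota$ to a functional on the closed subspace $\iota(W)\subset R\times S$, extend it by Hahn--Banach (or orthogonal projection, since we are in Hilbert space) to a functional $F$ on all of $R\times S$ with $\|F\|\lesssim\|f\|_{W'}$, and then decompose $F$ as $F(r,s)=\langle f_1,r\rangle+\langle f_2,s\rangle$ using $(R\times S)'\cong R'\times S'$, with $\|f_1\|_{R'}+\|f_2\|_{S'}\lesssim\|F\|$. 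Evaluating at $(w,Bw)$ gives $\langle f,w\rangle=\langle f_1,w\rangle+\langle f_2,Bw\rangle$, and the norm bound $\|f\|_{W'}\geqslant C(\|f_1\|_{R'}+\|f_2\|_{S'})$ follows by chaining the constants.

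I expect no serious obstacle here; the only thing requiring genuine care is the closedness argument in part (1), where one must use the hypothesis on $B$ correctly (convergence of $Bw_n$ in $H$, not merely in $S$, is what the closed-graph hypothesis consumes) and be honest about the implicit domain condition $W\subseteq\mathrm{dom}(B)$. In part (2) the constant $C$ is harmless — it is just the product of the coercivity constant of $\iota$ and the norm-equivalence constants between the various $\ell^1$- and $\ell^2$-type norms on the product — and the decomposition $(f_1,f_2)$ is of course far from unique, which is fine since the statement only asserts existence.
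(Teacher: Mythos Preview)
Your proof is correct. Part~(1) is exactly the paper's argument: Cauchy in $\|\cdot\|_W$ gives limits $w\in R$ and $g\in S$, continuity of the inclusion $S\hookrightarrow H$ upgrades convergence $Bw_n\to g$ to convergence in $H$, and closedness of $B$ forces $g=Bw$.

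For part~(2) the paper takes a slightly different route. Rather than embedding $W$ into $R\times S$ and extending by Hahn--Banach, it works intrinsically: fix inner products $a_R$, $a_S$ on $R$, $S$, so that $a(w,v):=a_R(w,v)+a_S(Bw,Bv)$ is an equivalent inner product on $W$; apply Riesz to produce $w_f\in W$ with $\langle f,v\rangle=a(w_f,v)$; then set $f_1:=a_R(w_f,\cdot)\in R'$ and $f_2:=a_S(Bw_f,\cdot)\in S'$. This is more explicit and gives concrete formulas for $f_1,f_2$ in terms of the Riesz representative, which ties directly into the bilinear forms used downstream in the paper's order-reduction framework. Your graph-embedding argument is cleaner conceptually and, if you use Hahn--Banach rather than orthogonal projection, has the mild advantage of not needing the Hilbert structure on $R\times S$ at all. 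In the Hilbert setting with orthogonal-projection extension, the two constructions in fact coincide: the extended functional on $R\times S$ has Riesz representative $(w_f,Bw_f)$, recovering exactly the paper's $f_1,f_2$.
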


\begin{proof}
Obviously, $w\in W$ if and only if $\|w\|_W<\infty$. Now let $\{w_j\}_{1}^\infty$ be a Cauchy sequence with respect to the norm $\|\cdot\|_W$, namely $\|w_i-w_j\|_{R}+\|Bw_i-Bw_j\|_{S}\to 0$ as $i,j\to \infty$, then $\|w_i-w_j\|_{R}\to 0$ and $\|Bw_i-Bw_j\|_{S}\to 0$ as $i,j\to \infty$. By the completeness of $R$ and $S$, there exists a $w_\infty\in R$ and $(Bw)_\infty\in S$, such that $\|w_j-v_\infty\|_{R}+\|Bw_j-(Bv)_\infty\|_{S}\to 0$ as $j\to \infty$. It follows that $\|w_j-w_\infty\|_{R}+\|Bw_j-(Bw)_\infty\|_{H}\to 0$ as $j\to \infty$. As $B$ is a closed operator from $R$ to $H$, $(Bw)_\infty=Bw_\infty$, namely $\|w_j-w_\infty\|_W\to 0$ as $j\to \infty$. 

Let $a_{R/ S}(\cdot,\cdot)$ be the inner product defined on $R/ S$. Then $\frac{1}{2}\|w\|_W^2\leqslant a_R(w,w)+a_S(Bw,Bw)\leqslant \|w\|_W^2$ for $w\in W$. Thus $a(w,v):=a_R(w,v)+a_S(Bw,Bv)$ is equivalently the inner product of $W$. Given $f\in W'$, there exists a $w_f\in W$, such that $\langle f,w\rangle=a(w_f,v)$ for any $v\in W$, and $\|w_f\|_W\leqslant C\|f\|_{W'}$. Obviously $a_R(w_f,\cdot)$ defines and $f_1\in R'$ while $\|f_1\|_{ R'}\leqslant \|w_f\|_{R}$ and $a_S(Bw_f,\cdot)$ defines an $f_2\in S'$ while $\|f_2\|_{S'}\leqslant\|Bw_f\|_{S}$. This completes the proof constructively. 
\end{proof}

\begin{definition}
Given two Hilbert spaces $R$ and $S$ and an operator $B$, if there is a Hilbert space $H$, such that $S\subset H$ continuously, and $B$ is a closed operator that maps $R$ into $H$, then the triple $\{R,S,B\}$ is called a \textbf{configurable} triple with respect to $H$, and $H$ is called the ground space of $\{R,S,B\}$. Given a Hilbert space $W$ equipped with norm $\|\cdot\|_W$, if $W=\{w\in R:Bw\in S\}$, and $\|w\|_W$ is equivalent to $\|w\|_R+\|Bw\|_S$, then $W$ is called to be \textbf{configurated} by the triple $\{R,S,B\}$.
\end{definition}
\begin{remark}
The ``configurable triple" defined here is relevant to the ``compatible" pair defined in \cite{Bergh.J;Lofstrom.J1976}. 
\end{remark}

Denote $BR:=\{Br:r\in R\}.$ As $BR\subset H$ and $S\subset H$, we can define 
$$
Y:=BR+S=\{Br+s:r\in R,\ s\in S\}.
$$
Note that $B$ is closed from $R$ to $H$, and thus $B$ is continuous and $BR$ is closed in $H$. Then, by Lemma 2.3.1 of \cite{Bergh.J;Lofstrom.J1976}, $BR+S$ is complete with respect to the norm
$$
\|y\|_{Y}:=\inf_{r\in R,s\in S, y=Br+s}\|Br\|_{H}+\|s\|_{S}.
$$
By definition, $BR\subset Y$, $S\subset Y$, $\|Br\|_Y\leqslant \|Br\|_{H}$ for $r\in R$ and $\|s\|_{Y}\leqslant \|s\|_{S}$ for $s\in S$. Moreover, since $S\subset H$ continuously, the estimation holds below. 
\begin{lemma}
For any $r\in R$, $\|Br\|_{H}\leqslant C\|Br\|_{Y}$.
\end{lemma}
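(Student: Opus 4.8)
The plan is to obtain the inequality directly from the triangle inequality in $H$ together with the hypothesis that $S\subset H$ continuously; no compactness or open-mapping argument is needed. Let $C_0$ denote the norm of the continuous embedding $S\hookrightarrow H$, so that $\|s\|_H\leqslant C_0\|s\|_S$ for every $s\in S$. Fix $r\in R$ and write $y:=Br\in BR\subset Y$.

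The key observation is that the norm $\|y\|_Y$ is an infimum over all representations $y=Bw+s$ with $w\in R$ and $s\in S$, so I would first establish the estimate for an arbitrary such representation and only then pass to the infimum. For a fixed decomposition $y=Bw+s$, the triangle inequality in $H$ gives
\begin{equation}
\|Br\|_H=\|Bw+s\|_H\leqslant \|Bw\|_H+\|s\|_H\leqslant \|Bw\|_H+C_0\|s\|_S\leqslant \max\{1,C_0\}\bigl(\|Bw\|_H+\|s\|_S\bigr).
\end{equation}
Since the left-hand side does not depend on the chosen decomposition, taking the infimum over all $w\in R$, $s\in S$ with $Br=Bw+s$ yields $\|Br\|_H\leqslant \max\{1,C_0\}\,\|Br\|_Y$, which is the asserted inequality with $C=\max\{1,C_0\}$.

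There is no real obstacle here: the statement is a one-line consequence of the definition of the sum-space norm and the continuity of $S\subset H$. The only point requiring a little care is the logical order — the bound must be proved for each individual representation $Br=Bw+s$ before the infimum is taken, rather than trying to bound $\|Br\|_H$ against a single ``optimal'' decomposition, which need not be attained. (Implicitly this also uses, as already recorded in the text, that $B$ is continuous from $R$ to $H$ so that $\|Bw\|_H$ is finite and the expressions above make sense; that fact is available from the preceding discussion.)
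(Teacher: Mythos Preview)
Your proof is correct and follows essentially the same approach as the paper: triangle inequality in $H$ combined with the continuous embedding $S\hookrightarrow H$. The only cosmetic difference is that the paper selects a single near-optimal decomposition $Br=Br'+s'$ with $\|Br'\|_H+\|s'\|_S\leqslant 2\|Br\|_Y$ and then argues from that, whereas you bound uniformly over all decompositions and pass to the infimum; the content is identical, and your handling of the embedding constant $C_0$ is in fact a bit more careful.
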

\begin{proof}
By definition, $\|Br\|_{Y}\geqslant 1/2( \|Br'\|_{H}+\|s'\|_{S})$ for some $r'\in R$ and $s'\in S$ such that $Br=Br'+s'$. Further $2\|Br\|_{Y}\geqslant \|Br'\|_{H}+\|s'\|_{S}\geqslant \|Br'\|_{H}+\|s'\|_{H}\geqslant \|Br\|_{H}$, which completes the proof. 
\end{proof}

\begin{lemma}
$BR$ is closed in $Y$.
\end{lemma}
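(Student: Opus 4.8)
The goal is to show $BR$ is closed in the space $Y = BR + S$ equipped with the norm $\|y\|_Y = \inf_{y=Br+s}\|Br\|_H + \|s\|_S$. The plan is to take a sequence $\{Br_j\}$ in $BR$ converging to some $y \in Y$ in the $Y$-norm, and produce $r \in R$ with $Br = y$. By Lemma~2.7 (the previous lemma), $Y$-convergence of $\{Br_j\}$ to $y$ controls $\|Br_j - Br_k\|_H$, so $\{Br_j\}$ is Cauchy in $H$; since $B$ is closed from $R$ into $H$ — hence (by the closed graph theorem, $R$ and $H$ being Hilbert) bounded, with $BR$ closed in $H$ — there is $r \in R$ with $\|Br_j - Br\|_H \to 0$. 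It remains to identify the $Y$-limit $y$ with $Br$.

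The key observation is that $Br \in BR \subset Y$, and $\|Br_j - Br\|_Y \leqslant \|Br_j - Br\|_H$ by the definition of the $Y$-norm (take the trivial decomposition with $s=0$). Hence $Br_j \to Br$ in the $Y$-norm as well. Since limits in the normed space $Y$ are unique, $y = Br \in BR$. Therefore every $Y$-convergent sequence drawn from $BR$ has its limit in $BR$, i.e.\ $BR$ is closed in $Y$.

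The only genuine subtlety is the step asserting that $B$ closed from $R$ to $H$ forces $BR$ to be closed \emph{in} $H$ and $B$ to be bounded; this is exactly the remark already made in the text right before the definition of $Y$ (``$B$ is closed from $R$ to $H$, and thus $B$ is continuous and $BR$ is closed in $H$''), so I would simply invoke it. Everything else is a one-line comparison of norms, so there is no real obstacle here — the statement is essentially a packaging of Lemma~2.7 together with the closedness of $BR$ in $H$.
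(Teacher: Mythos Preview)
The paper states this lemma without proof, so there is nothing to compare against. Your argument is correct and is essentially the natural one: the preceding lemma gives $\|Br\|_H\leqslant C\|Br\|_Y$ on $BR$, while the definition of $\|\cdot\|_Y$ gives $\|Br\|_Y\leqslant\|Br\|_H$, so the two norms are equivalent on $BR$; since the text has already recorded that $BR$ is closed in $H$, closedness in $Y$ follows. Your sequential write-up packages exactly this, and the identification of the $Y$-limit with the $H$-limit via $\|Br_j-Br\|_Y\leqslant\|Br_j-Br\|_H$ is the right way to close the loop.
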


\begin{example}
It can be easy to find second order Sobolev spaces that are configurated. For example:
\begin{enumerate}
\item $\{H^1_0(\Omega),\undertilde{H}{}^1_0(\Omega),\nabla\}$ is a configurable triple with respect to $L^2(\Omega)$; $H^{2}_0(\Omega)$ is configurated by $\{H^1_0(\Omega),\undertilde{H}{}^1_0(\Omega),\nabla \}$; $H_0(\curl,\Omega)=\nabla H^1_0(\Omega)+\undertilde{H}{}^1_0(\Omega)$;
\item $\{H_0(\curl,\Omega),\undertilde{H}{}^1_0(\Omega),\curl\}$ is a configurable triple with respect to $L^2(\Omega)$; $\undertilde{H}{}^1_0(\curl,\Omega)$ is configurated by $\{H_0(\curl,\Omega),\undertilde{H}{}^1_0(\Omega),\curl\}$; $H_0(\dv,\Omega)=\curl H_0(\curl,\Omega)+\undertilde{H}{}^1_0(\Omega)$.
\end{enumerate}
\end{example}

\subsection{Order reduction of high order problem}
Below we present an order reduction process of \eqref{eq:primal}. 

Let $\{R,S,B\}$ be a configurable triple, and $V$ be configurated by the triple $\{R,S,B\}$. Assume $a(\cdot,\cdot)$ can be represented as
\begin{equation}
a(u,v):=a_R(u,v)+b(u,Bv)+b(v,Bu)+a_S(Bu,Bv),
\end{equation}
where $a_R(\cdot,\cdot)$ $a_S(\cdot,\cdot)$ are two bounded symmetric semidefinite bilinear forms on $R$ and $S$ and $b(\cdot,\cdot)$ is a bounded bilinear form on $R\times S$, and $f(\cdot)$ is represented as
\begin{equation}
f(v)=\langle f_R,v\rangle+\langle f_S,Bv\rangle,\ \ f_R\in R',\ f_S\in S'.
\end{equation}
This way, the problem \eqref{eq:primal} is then to find $u\in V$, such that 
\begin{equation}\label{eq:vpexp}
a_R(u,v)+b(u,Bv)+b(v,Bu)+a_S(Bu,Bv)=\langle f_R,v\rangle+\langle f_S,Bv\rangle,\ \ \forall\,v\in V.
\end{equation}
We assume $a(\cdot,\cdot)$ be coercive on $V$, but $a_{R/S}(\cdot,\cdot)$ is not necessarily an inner product on $R/S$.
Evidently, it is equivalent to find $(u,\phi)\in V\times S$, such that $Bu=\phi$, and 
\begin{equation}
\label{eq:relaxonce}
a_R(u,v)+b(u,\psi)+b(v,\phi)+a_S(\phi,\psi)=\langle f_R,v\rangle+\langle f_S,\psi\rangle,
\\
\forall\,v\in V,\ \mbox{and}\ \psi\in S,\ \mbox{such\ that}\ Bv=\psi.
\end{equation}

The lemma below is evident.
\begin{lemma}\label{lem:fund}
Given $r\in R$ and $s\in S$, the four items below are equivalent: 
\begin{enumerate}
\item  $Br=s$;
\item $r\in V$ and $\langle l_2,Br-s\rangle=0$ for any $l_2\in S'$;
\item $s\in BR$, and $\langle l_1,Br-s\rangle=0$ for any $l_1\in (BR)'$;
\item $\langle l_+,Br-s\rangle=0$ for any $l_+\in Y'$.
\end{enumerate}
\end{lemma}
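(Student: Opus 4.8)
The plan is to prove the four-fold equivalence by a cyclic chain of implications, say (1)$\Rightarrow$(2)$\Rightarrow$(4)$\Rightarrow$(3)$\Rightarrow$(1), after recording two elementary observations that make the duality pairings meaningful. The first observation is that for $r\in R$ the element $Br$ lies in $H$, hence in $Y=BR+S$, and likewise $s\in S\subset Y$; so $Br-s\in Y$ and all three pairings $\langle l_2,\cdot\rangle$, $\langle l_1,\cdot\rangle$, $\langle l_+,\cdot\rangle$ make sense once we interpret $Br-s$ appropriately. The second observation is the key structural fact: $S\subset Y$ and $BR\subset Y$ are closed subspaces (the latter by the lemma stating $BR$ is closed in $Y$), and by the configuration hypothesis together with the fact that $Y=BR+S$, any $y\in Y$ with $y=0$ forces the components to behave correctly; more precisely, if $Br=s$ in $H$ then $r\in V$ by the very definition $V=\{w\in R:Bw\in S\}$ (since $Br=s\in S$), which is what the equivalence ultimately pivots on.

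For (1)$\Rightarrow$(2): if $Br=s$, then $Br=s\in S$, so $r\in W=V$ by definition of the configurated space, and $Br-s=0$ in $Y$, so every pairing against it vanishes; in particular $\langle l_2,Br-s\rangle=0$ for all $l_2\in S'$. For (2)$\Rightarrow$(4): here the subtlety is that $r\in V$ already gives $Br\in S$, so both $Br$ and $s$ lie in $S$; the difference $Br-s$ is an element of $S$, and $Y'$ restricted to $S$ — via the continuous inclusion $S\subset Y$ and Lemma~2.3.1 of \cite{Bergh.J;Lofstrom.J1976} — contains enough functionals to be norming on $S$ (equivalently, $S'$ is a quotient of $Y'$ under the adjoint of the inclusion, but what we need is only that $\langle l_2,\cdot\rangle$ for $l_2\in S'$ is recovered by restricting some $l_+\in Y'$); hence $\langle l_+,Br-s\rangle=0$ for all $l_+\in Y'$ follows from the hypothesis, provided we know $S\hookrightarrow Y$ has dense-enough dual action. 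Actually the cleanest route is the other direction in the chain, so I will instead run (2)$\Rightarrow$(1) directly and then derive (3) and (4) from (1).

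Thus I would really organise it as: (1)$\Leftrightarrow$(2), (1)$\Leftrightarrow$(3), (1)$\Leftrightarrow$(4). For (2)$\Rightarrow$(1): given $r\in V$, we have $Br\in S$, so $Br-s\in S\subset S'^*$, and a nonzero element of $S$ is detected by some $l_2\in S'$ (Hahn--Banach on the Hilbert — indeed any — space $S$); since $\langle l_2,Br-s\rangle=0$ for all $l_2\in S'$ we conclude $Br-s=0$ in $S$, hence $Br=s$. For (1)$\Rightarrow$(3): $Br=s$ gives $s=Br\in BR$ and $Br-s=0$, so all pairings vanish. For (3)$\Rightarrow$(1): $s\in BR$ means $s=Br'$ for some $r'$, and $Br-s=B(r-r')\in BR\subset Y$; since $BR$ is closed in $Y$ (cited lemma) it is a Banach space in the $Y$-norm, and $(BR)'$ separates its points, so $\langle l_1,Br-s\rangle=0$ for all $l_1\in(BR)'$ forces $Br-s=0$ in $Y$; but on $BR$ the $Y$-norm is equivalent to the $H$-norm by Lemma stating $\|Br\|_H\le C\|Br\|_Y$ (combined with $\|Br\|_Y\le\|Br\|_H$), so $Br=s$ in $H$. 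For (1)$\Leftrightarrow$(4): $Y'$ separates points of $Y$ and $Br-s\in Y$, so $\langle l_+,Br-s\rangle=0$ for all $l_+\in Y'$ iff $Br-s=0$ in $Y$, which by the norm equivalence on the relevant closed subspace is the same as $Br=s$. The main obstacle — really the only non-bookkeeping point — is correctly handling where the difference $Br-s$ lives in each case: in (2) it is a priori only in $S$, in (3) a priori only in $BR$, and making sure the separating dual ($S'$, $(BR)'$, or $Y'$ respectively) is applied to the right space and that vanishing there propagates to $Br=s$ in $H$ via the established norm comparisons; since each of these is settled by the preceding lemmas, the proof is short. The word ``evident'' in the statement is justified.
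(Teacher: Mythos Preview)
Your argument is correct: once you reorganise as three pairwise equivalences with item~(1), each direction is just Hahn--Banach separation in the appropriate space ($S$, $BR$ with its $Y$-norm, or $Y$), combined with the norm comparisons from the preceding lemmas to pass back to equality in $H$. The paper itself gives no proof of this lemma---it simply calls it ``evident''---so there is no alternative approach to compare against; your write-up makes the evident precise.
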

\begin{remark}
Lemma \ref{lem:fund} connects \eqref{eq:relaxonce} to several equivalent variants of the variational problem. The second item of Lemma \ref{lem:fund} corresponds to the primal formulation of the variation problem, which relies on the configuration of the space $V$; the third item presents that an equivalent problem can be constructed on $BR\cap S$, while it relies on the figuration of $BR\cap S$; the fourth item presents an intermediate mechanism between the former two ones: the regularity is lower than the second item, and the space figuration is looser than the third. 
\end{remark}
Then an equivalent formulation of \eqref{eq:relaxonce} is to find $(u,\phi,g_+)\in R\times S\times Y'$, such that, for any $(v,\psi,l_+)\in R\times S\times Y'$,
\begin{equation}
\left\{
\begin{array}{cccl}
\displaystyle a_R(u,v)&+b(v,\phi)&+\langle g_+,Bv\rangle &=\langle f_R,v\rangle,
\\
\displaystyle b(u,\psi)&+a_S(\phi,\psi)&-\langle g_+,\psi\rangle&=\langle f_S,\psi\rangle,
\\
\displaystyle \langle l_+,Bu\rangle&-\langle l_+,\phi\rangle&&=0.
\end{array}
\right.
\end{equation}
Further, let $c(\cdot,\cdot)$ be an inner product defined on $Y$. Given $l_+\in Y'$, there exists a $\zeta_l$, such that $\langle l_+,\eta\rangle=c(\zeta_l,\eta)$ for any $\eta\in Y$, and $\|l_+\|_{Y'}\cequiv \|\zeta_l\|_{Y}$. Thus an equivalent formulation of \eqref{eq:relaxonce} is to find $(u,\phi,\zeta)\in X:=R\times S\times Y$, such that, for $(v,\psi,\eta)\in X$,
\begin{equation}\label{eq:relaxtwice}
\left\{
\begin{array}{cccl}
\displaystyle a_R(u,v)&+b(v,\phi)&+c(Bv,\zeta)&=\langle f_R,v\rangle,
\\
\displaystyle b(u,\psi)&+a_S(\phi,\psi)&-c(\psi,\zeta)&=\langle f_S,\psi\rangle,
\\
\displaystyle c(Bu,\eta)&-c(\phi,\eta)&&=0.
\end{array}
\right.
\end{equation}

A main result of this paper is the theorem below. 
\begin{theorem}\label{thm:abs}
Let $\{R,S,B\}$ be a configurable triple, and $V$ be configurated by the triple $\{R,S,B\}$. Given $f_R\in R'$ and $f_S\in S'$, the problem \eqref{eq:relaxtwice} admits a unique solution $(u,\phi,\zeta)\in X$, and 
$$
\|u\|_{R}+\|\phi\|_{S}+\|\zeta\|_{Y}\cequiv \|f_R\|_{R'}+\|f_S\|_{S'}.
$$
Moreover, $u$ solves the primal problem \eqref{eq:vpexp}. 
\end{theorem}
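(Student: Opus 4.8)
The plan is to recognize \eqref{eq:relaxtwice} as a symmetric saddle-point system and to verify the Babu\v{s}ka--Brezzi conditions on the product space $X = R\times S\times Y$. First I would split the bilinear form on $X$ into its principal part $\mathcal{A}\big((u,\phi),(v,\psi)\big) := a_R(u,v)+b(v,\phi)+b(u,\psi)+a_S(\phi,\psi)$ acting on the first two components, and the coupling form $\mathcal{B}\big((v,\psi),\eta\big) := c(Bv,\eta)-c(\psi,\eta)$ pairing the first two components against the Lagrange multiplier $\zeta\in Y$. Continuity of both forms is immediate: $a_R$, $a_S$, $b$ are bounded by hypothesis, and $c$ is bounded on $Y\times Y$ since it is equivalent to the inner product of $Y$, while $v\mapsto Bv$ is continuous from $R$ into $H$ and hence, by Lemma (the one giving $\|Br\|_H\le C\|Br\|_Y$, combined with boundedness of $B\colon R\to H$), continuous into $Y$ as well.

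The two nontrivial ingredients are the inf--sup condition for $\mathcal{B}$ and coercivity of $\mathcal{A}$ on the kernel. For the inf--sup condition, fix $\eta\in Y$; I would exploit the structure $Y = BR+S$ directly. Since $c$ is an inner product on $Y$, there is a canonical way to realize $\eta$; more concretely, writing $\eta = Br_\eta + s_\eta$ with $\|Br_\eta\|_H+\|s_\eta\|_S\lesssim \|\eta\|_Y$ by definition of the $Y$-norm, I would test $\mathcal{B}$ with the pair $(v,\psi) = (r_\eta, -s_\eta)$ — this is not quite right because $\mathcal{B}$ pairs with $c(\cdot,\eta)$, so instead I would test with a pair chosen so that $Bv - \psi$ represents $\eta$ through $c$. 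The clean route: the map $(v,\psi)\mapsto Bv-\psi$ from $R\times S$ onto $Y=BR+S$ is surjective and open (this is essentially the definition of the sum space and its quotient norm, Lemma 2.3.1 of Bergh--L\"ofstr\"om as already invoked), so it has a bounded right inverse; composing with the Riesz map of $c$ on $Y$ gives, for each $\eta$, a pair $(v,\psi)$ with $\|v\|_R+\|\psi\|_S\lesssim\|\eta\|_Y$ and $\mathcal{B}\big((v,\psi),\eta\big) = c(Bv-\psi,\eta) = \|\eta\|_{c}^2 \gtrsim \|\eta\|_Y^2$. That yields the inf--sup constant. The kernel of $\mathcal{B}$ is $\{(v,\psi)\in R\times S : Bv=\psi\}$, which by the configuration hypothesis is isomorphic to $V$ with equivalent norms; on this kernel $\mathcal{A}$ restricts precisely to $a(v,v)$, which is coercive on $V$ by assumption. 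Hence both Brezzi conditions hold, and the abstract saddle-point theory gives existence, uniqueness, and the stability estimate $\|u\|_R+\|\phi\|_S+\|\zeta\|_Y\cequiv\|f_R\|_{R'}+\|f_S\|_{S'}$.

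It remains to show $u$ solves \eqref{eq:vpexp}. From the third equation of \eqref{eq:relaxtwice}, $c(Bu-\phi,\eta)=0$ for all $\eta\in Y$, and since $c$ is an inner product on $Y$ and $Bu-\phi\in Y$, this forces $\phi=Bu$; in particular $Bu\in S$, so $u\in V$. Substituting $\phi=Bu$ into the first two equations and restricting test functions to $v\in V$ with $\psi=Bv$ (which is legitimate since then $(v,Bv)$ lies in the kernel and the $c$-terms reorganize: adding the first equation tested at $v$ to the second tested at $Bv$, the terms $c(Bv,\zeta)$ and $-c(Bv,\zeta)$ cancel), one recovers exactly \eqref{eq:vpexp}. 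Conversely the solution of \eqref{eq:vpexp}, together with $\phi:=Bu$ and the $\zeta$ furnished by the inf--sup surjectivity applied to the residual functional $v\mapsto \langle f_R,v\rangle - a_R(u,v)-b(v,\phi)$ (which, one checks, annihilates the kernel of $\mathcal{B}$ and hence is represented through $c(B\cdot,\zeta)$), gives a solution of \eqref{eq:relaxtwice}; by uniqueness the two $u$'s coincide.

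I expect the main obstacle to be the inf--sup argument — specifically, making rigorous that the operator $(v,\psi)\mapsto Bv-\psi$ from $R\times S$ to $Y$ is a quotient map with a bounded right inverse, and correctly transporting the quotient norm on $Y$ through the $c$-inner product so that the test pair one selects both has controlled $R\times S$-norm and produces a quantity bounded below by $\|\eta\|_Y^2$. The earlier lemmas ($\|Br\|_H\lesssim\|Br\|_Y$ and the closedness of $BR$ in $Y$) are exactly what is needed to keep the two summands in the quotient representation from interacting badly, so the key is to assemble them carefully rather than to prove anything genuinely new.
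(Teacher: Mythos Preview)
Your proposal is correct and follows essentially the same route as the paper: recast \eqref{eq:relaxtwice} as a saddle-point problem, observe that coercivity on the kernel $\{(v,\psi):Bv=\psi\}\cong V$ is exactly the assumed ellipticity of $a(\cdot,\cdot)$, and reduce the inf--sup condition to the surjectivity-with-norm-control of $(v,\psi)\mapsto Bv-\psi$ from $R\times S$ onto $Y$. The only tactical difference is that where you invoke the open mapping theorem to get a bounded right inverse, the paper writes the inverse down explicitly---given $\eta=p+s$ with $p\in BR$, $s\in S$, $\|p\|_H+\|s\|_S\lesssim\|\eta\|_Y$, it takes $r\in(\ker B)^\perp$ with $Br=p$, so that $\|r\|_R\cequiv\|p\|_H$ by the closed graph/bounded inverse theorem on $(\ker B)^\perp\to BR$---and you also spell out the equivalence with the primal problem via $\phi=Bu$ and cancellation of the $c$-terms, which the paper leaves implicit from the discussion surrounding Lemma~\ref{lem:fund}.
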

\begin{proof}
We only have to verify Brezzi's conditions. The coercivity follows from the ellipticity of $a(\cdot,\cdot)$ on V. It remains to verify the inf-sup condition 
\begin{equation}
\inf_{\eta\in Y\setminus\{\mathbf{0}\}}\sup_{(u,\phi)\in R\times S}\frac{c(Bu-\phi,\eta)}{(\|u\|_{R}+\|\phi\|_{S})\|\eta\|_Y}\geqslant C>0.
\end{equation}
for which it suffices to prove, given $\eta\in Y$, there exists $r\in R$ and $s\in S$, such that $Br+s=Y$, and $\|r\|_{R}+\|s\|_{S}\leqslant C\|\eta\|_Y$.

Actually, as $B$ is closed from $R$ to $H$, its kernel space $ker(B):=\{w\in R:Bw=\mathbf{0}\}$ is closed, and $B$ is an isometric operator between $(ker(B))^\perp$, the orthogonal complete of $ker(B)$ in $R$, and $BR$. Now, by definition of $Y$, given $\eta\in Y$, there exists a $p\in BR$ and $s\in S$, such that $\|p\|_{H}+\|s\|_{S}\leqslant C\|\eta\|_Y$, and $p+s=y.$ Further, there exists a unique $r\in (ker(B))^\perp\subset R$, such that $Br=p$. Then $\|r\|_{R}$ is equal to $\|p\|_{H}$ up to a constant and $Br+s=y$. The proof is completed. 
\end{proof}

\subsection{Discretization of \eqref{eq:relaxtwice}}
By Theorem \ref{thm:abs}, \eqref{eq:relaxtwice} presents an equivalent formulation of \eqref{eq:vpexp}.  To construct an order reduced formulation, people only have to seek a triple $\{R,S,B\}$ and to figure out the space $Y=BR+S$. This helps construct low-order numerical schemes, and a discretised analogue of the stable decomposition $Y=BR+S$ plays a key role. 

We suppose subspaces $R_h\subset R$, $S_h\subset S$ and $Y_h\subset Y$ are respectively closed. Moreover, we suppose there is an operator $\Pi_h$ to $Y_h$, such that 
$Y_h=\Pi_h(BR_h+S_h)$. Then we consider the variational problem: find $(u_h,\phi_h,\zeta_h)\in X_h:=R_h\times S_h\times Y_h$, such that, for any $(v_h,\psi_h,\eta_h)\in X_h$, 
\begin{equation}\label{eq:relaxtwicedis}
\left\{
\begin{array}{cccl}
\displaystyle a_R(u_h,v_h)&+b(v_h,\phi_h)&+c(\Pi_hBv_h,\zeta_h)&=\langle f_R,v_h\rangle,
\\
\displaystyle b(u_h,\psi_h)&+a_S(\phi_h,\psi_h)&-c(\Pi_h\psi_h,\zeta_h)&=\langle f_S,\psi_h\rangle,
\\
\displaystyle c(\Pi_hBu_h,\eta_h)&-c(\Pi_h\phi_h,\eta_h)&&=0.
\end{array}
\right.
\end{equation}
Let $\{X_h\}_{h\to 0}$ be a family of discretised spaces. We propose the hypotheses below on the family: there is a constant $C$, such that 
\begin{enumerate}
\item $\|\Pi_hBr_h\|_Y\leqslant C\|r_h\|_R$, $\|\Pi_hs_h\|_Y\leqslant C\|s_h\|_S$, for $r_h\in R_h$ and $s_h\in S_h$;
\item $\|r_h\|_R^2+\|s_h\|_S^2\leqslant C(a_R(r_h,r_h)+b(r_h,s_h)+b(s_h,r_h)+a_S(s_h,s_h))$ for $(r_h,s_h)\in Z_h:=\{(r_h,s_h)\in R_h\times S_h:c(\Pi(Br_h-s_h),\eta_h)=0,\forall\,\eta_h\in Y_h\}$;
\item $\displaystyle\sup_{y_h\in Y_h\setminus\{\mathbf{0}\}}\inf_{r_h\in R_h,s_h\in S_h,\Pi_h(Br_h+s_h)=y_h}\frac{\|r_h\|_R+\|s_h\|_S}{\|y_h\|_Y}\leqslant C$.
\end{enumerate}
\begin{lemma}\label{lem:absconv}
Provided the hypotheses above, the problem \eqref{eq:relaxtwicedis} admits a unique solution $(u_h,\phi_h,\zeta_h)\in X_h$. Let $(u,\phi,\zeta)$ be the solution of \eqref{eq:relaxtwice}. Then
\begin{multline}\label{eq:absconv}
\|u-u_h\|_R+\|\phi-\phi_h\|_S+\|\zeta-\zeta_h\|_Y\leqslant C\Big[\inf_{(v_h,\psi_h,\eta_h)\in X_h}\|(u-v_h,\phi-\psi_h,\zeta-\eta_h)\|_V
\\
+\sup_{v_h\in R_h}\frac{c(Bv_h-\Pi_hBv_h,\zeta)}{\|v_h\|_R}+\sup_{\psi_h\in S_h}\frac{c(\psi_h-\Pi_h\psi_h,\zeta)}{\|\psi_h\|_S}+\sup_{\eta_h\in Y_h}\frac{c((Bu-\phi)-\Pi_h(Bu-\phi),\eta_h)}{\|\eta_h\|_Y}\Big].
\end{multline}
\end{lemma}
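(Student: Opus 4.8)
The plan is to follow the standard error-analysis route for a mixed (Brezzi-type) discretization, but with the extra twist that the bilinear forms contain the projection $\Pi_h$, so the scheme is \emph{nonconforming} relative to \eqref{eq:relaxtwice} in the third slot. First I would establish well-posedness of \eqref{eq:relaxtwicedis}: since $X_h$ is finite-dimensional (or at any rate a closed subspace with a complete product norm), it suffices to prove uniqueness, i.e. that the homogeneous problem has only the zero solution; for this one invokes hypothesis (2) to control $(u_h,\phi_h)$ on the discrete kernel $Z_h$ and hypothesis (3) (the discrete inf-sup condition) to control $\zeta_h$. Concretely, taking $(v_h,\psi_h)=(u_h,\phi_h)$ and $\eta_h=\zeta_h$ and adding the three equations of the homogeneous system makes the $c(\Pi_hBu_h,\zeta_h)$ and $c(\Pi_h\phi_h,\zeta_h)$ terms cancel, leaving $a_R(u_h,u_h)+b(u_h,\phi_h)+b(\phi_h,u_h)+a_S(\phi_h,\phi_h)=0$; since $(u_h,\phi_h)\in Z_h$, hypothesis (2) forces $u_h=\mathbf 0$, $\phi_h=\mathbf 0$, and then the first (or second) equation reads $c(\Pi_hBv_h,\zeta_h)=0$ for all $v_h\in R_h$ (resp. $c(\Pi_h\psi_h,\zeta_h)=0$ for all $\psi_h\in S_h$), so $c(y_h,\zeta_h)=0$ for all $y_h\in Y_h=\Pi_h(BR_h+S_h)$, whence $\zeta_h=\mathbf 0$. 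The same pair of arguments, phrased with a right-hand side, gives the a priori bound $\|u_h\|_R+\|\phi_h\|_S+\|\zeta_h\|_Y\lesssim \|f_R\|_{R'}+\|f_S\|_{S'}$.

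Next I would derive the error equations. Because \eqref{eq:relaxtwice} holds for all test functions in $X$, in particular for $(v_h,\psi_h,\eta_h)\in X_h\subset X$, I can subtract it from \eqref{eq:relaxtwicedis}. The subtlety is that \eqref{eq:relaxtwicedis} uses $\Pi_hBv_h$, $\Pi_h\psi_h$, $\Pi_hBu_h$, $\Pi_h\phi_h$ where \eqref{eq:relaxtwice} uses $Bv_h$, $\psi_h$, $Bu$, $\phi$; so the subtraction produces consistency (nonconformity) residuals of exactly the four shapes
\[
c(Bv_h-\Pi_hBv_h,\zeta),\quad c(\psi_h-\Pi_h\psi_h,\zeta),\quad c\big((Bu-\phi)-\Pi_h(Bu-\phi),\eta_h\big),
\]
which is why these three terms appear on the right-hand side of \eqref{eq:absconv}. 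Writing $u_h-v_h=:e_u$, $\phi_h-\psi_h=:e_\phi$, $\zeta_h-\eta_h=:e_\zeta$ for an arbitrary $(v_h,\psi_h,\eta_h)\in X_h$, the error equations express the discrete bilinear form applied to $(e_u,e_\phi,e_\zeta)$ against any test triple in terms of (a) the approximation errors $u-v_h$, $\phi-\psi_h$, $\zeta-\eta_h$ paired through the \emph{bounded} forms $a_R,a_S,b,c$, and (b) the four consistency residuals above. Then one applies the discrete stability of \eqref{eq:relaxtwicedis} — which is exactly the combination of hypotheses (1), (2), (3), packaged as a Brezzi inf-sup/coercivity statement on $X_h$ — to bound $\|e_u\|_R+\|e_\phi\|_S+\|e_\zeta\|_Y$ by the right-hand side, and finishes with the triangle inequality $\|u-u_h\|\le\|u-v_h\|+\|e_u\|$ etc., taking the infimum over $(v_h,\psi_h,\eta_h)\in X_h$.

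Two intermediate lemmas would streamline this. The first is a \emph{discrete Brezzi stability} statement: under hypotheses (1)–(3), the bilinear form of \eqref{eq:relaxtwicedis} satisfies a uniform inf-sup condition on $X_h\times X_h$ with the product norm $\|(v,\psi,\eta)\|_V:=\|v\|_R+\|\psi\|_S+\|\eta\|_Y$; here (1) gives boundedness of the off-diagonal $c(\Pi_h\cdot,\cdot)$ couplings, (2) gives kernel-coercivity, and (3) gives the inf-sup for the constraint block — this is the finite-dimensional analogue of the argument already carried out for Theorem \ref{thm:abs}, and mirrors it line by line with $B$ replaced by $\Pi_hB$ and $\mathrm{id}$ by $\Pi_h$. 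The second is the observation that the residual terms are genuinely of ``variational crime'' type and cannot be absorbed into the approximation error, which is why they are kept explicit in \eqref{eq:absconv}; in the concrete applications of Sections \ref{sec:mixbl}–\ref{sec:mixfoc} they will be shown to be of optimal order using the approximation properties of $\Pi_h$.

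The main obstacle I anticipate is \textbf{not} the abstract manipulation but getting the bookkeeping of the $\Pi_h$ terms exactly right so that the error equation has only the three advertised residuals and no hidden extra terms — in particular, one must check that $\Pi_h$ is \emph{self-adjoint-free} in the sense that no identity like $c(\Pi_h a,b)=c(a,\Pi_h b)$ is silently used (it is not assumed), so the residuals must be attached to whichever argument actually carries the $\Pi_h$ in \eqref{eq:relaxtwicedis}. The second potential snag is verifying that hypotheses (1)–(3) really do assemble into a \emph{uniform} (in $h$) Brezzi condition with constants independent of $h$; the uniformity is the whole point, and it hinges on (1) controlling $\|\Pi_h B r_h\|_Y$ and $\|\Pi_h s_h\|_Y$ by $\|r_h\|_R$, $\|s_h\|_S$ with an $h$-independent constant, so that the discrete ``$Y_h$-image'' of $R_h\times S_h$ does not degenerate — everything else is then a routine repetition of the proof of Theorem \ref{thm:abs}.
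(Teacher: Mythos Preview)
Your proposal is correct and follows the same approach as the paper: the paper's proof simply states that hypotheses (1)--(3) verify Brezzi's conditions (giving well-posedness) and that \eqref{eq:absconv} is a Strang-type estimate, referring to Proposition~5.5.6 of \cite{Boffi.D;Brezzi.F;Fortin.M2013}. You have essentially unpacked that reference, spelling out the discrete inf-sup/kernel-coercivity assembly and the consistency-residual bookkeeping that the paper leaves implicit.
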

\begin{proof}
The hypotheses verify Brezzi's conditions, and the well-posedness of the discretised problem follows. \eqref{eq:absconv} is a Strang-type estimate, and we refer to, e.g., Proposition 5.5.6 of \cite{Boffi.D;Brezzi.F;Fortin.M2013}. The proof is completed.
\end{proof}

\begin{remark}
Provided the hypotheses, roughly speaking, \eqref{eq:relaxtwicedis} induces a stable bilinear form on $X_h$, thus an optimal preconditioner can be constructed for \eqref{eq:relaxtwicedis} by the aid of auxiliary problems constructed on $R_h$, $S_h$ and $Y_h$(\cite{Ruesten.T;Winther.R1992}). Moreover, by the stable decomposition (the third of the hypotheses), a fast auxiliary space preconditioner can be constructed for \eqref{eq:relaxtwicedis} with auxiliary problems constructed on $R_h$ and $S_h$(\cite{Xu.J1996as,Hiptmair.R;Xu.J2007,Xu.J2010icm,Zhang.S;Xu.J2014}). 
\end{remark}

\section{Continuous and discretized regular decompositions}
\label{sec:rds}

\subsection{Two stable regular decompositions}

\begin{lemma}\label{lem:capreg}
(\cite{GiraultRaviart1986})For polyhedron domain $\Omega$, $H_0(\curl,\Omega)\cap H_0(\dv,\Omega)=\undertilde{H}{}^1_0(\Omega).$ Moreover, if $\Omega$ is convex, $H_0(\curl,\Omega)\cap H(\dv,\Omega)\subset \undertilde{H}^1(\Omega),$ and $H(\curl,\Omega)\cap H_0(\dv,\Omega)\subset \undertilde{H}^1(\Omega).$
\end{lemma}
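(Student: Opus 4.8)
The plan is to establish the equality $H_0(\curl,\Omega)\cap H_0(\dv,\Omega)=\undertilde{H}{}^1_0(\Omega)$ by two inclusions, and then to treat the convex case by a Helmholtz-type reduction. The inclusion ``$\supset$'' is immediate: if $\uv\in\undertilde{H}{}^1_0(\Omega)$ then $\curl\uv,\dv\uv\in L^2(\Omega)$, and the vanishing of the full trace of $\uv$ on $\Gamma$ forces both its tangential trace and its normal trace to vanish, so $\uv\in H_0(\curl,\Omega)\cap H_0(\dv,\Omega)$. For ``$\subset$'', given $\uv\in H_0(\curl,\Omega)\cap H_0(\dv,\Omega)$ I would extend $\uv$ by zero to $\widetilde\uv$ on $\mathbb{R}^3$. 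The key observation is a compatibility statement for the zero extension: because the tangential trace of $\uv$ vanishes, $\curl\widetilde\uv$ coincides with the zero extension of $\curl\uv$ (no surface distribution appears on $\Gamma$), and because the normal trace vanishes, $\dv\widetilde\uv$ coincides with the zero extension of $\dv\uv$; hence $\widetilde\uv,\curl\widetilde\uv,\dv\widetilde\uv\in L^2(\mathbb{R}^3)$.

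Passing to the Fourier transform and using the pointwise identity
\[
|\xi|^2\,|\widehat{\widetilde\uv}(\xi)|^2=|\xi\times\widehat{\widetilde\uv}(\xi)|^2+|\xi\cdot\widehat{\widetilde\uv}(\xi)|^2 ,\qquad \xi\in\mathbb{R}^3,
\]
together with $\widehat{\curl\widetilde\uv}=i\,\xi\times\widehat{\widetilde\uv}$ and $\widehat{\dv\widetilde\uv}=i\,\xi\cdot\widehat{\widetilde\uv}$, one gets $\xi\mapsto|\xi|\,\widehat{\widetilde\uv}(\xi)\in L^2(\mathbb{R}^3)$, i.e. $\widetilde\uv\in\undertilde{H}{}^1(\mathbb{R}^3)$. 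Restricting to $\Omega$ yields $\uv\in\undertilde{H}{}^1(\Omega)$; since $\widetilde\uv$ vanishes on $\mathbb{R}^3\setminus\overline\Omega$, the trace of $\uv$ on $\Gamma$ is zero, so $\uv\in\undertilde{H}{}^1_0(\Omega)$, and the estimate $\|\uv\|_1\lesssim\|\uv\|_0+\|\curl\uv\|_0+\|\dv\uv\|_0$ follows from the same computation.

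For the convex case only one of the two traces is prescribed, so the zero-extension argument breaks down. Consider $\uv\in H(\curl,\Omega)\cap H_0(\dv,\Omega)$. Since $\uv\cdot n=0$ on $\Gamma$, $\dv\uv\in L^2_0(\Omega)$, so I would solve the Neumann problem $\Delta\phi=\dv\uv$ in $\Omega$, $\partial_n\phi=0$ on $\Gamma$; convexity of the polyhedron provides $\phi\in H^2(\Omega)$ (the $H^2$-regularity of the scalar Laplacian on convex polyhedra). Then $\uw:=\uv-\nabla\phi$ satisfies $\dv\uw=0$, $\uw\cdot n=0$ on $\Gamma$ and $\curl\uw=\curl\uv\in L^2(\Omega)$, while $\nabla\phi\in\undertilde{H}{}^1(\Omega)$. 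It remains to show $\uw\in\undertilde{H}{}^1(\Omega)$: using that $\Omega$ is simply connected with connected boundary, one reduces, by solving a vector Poisson problem $-\Delta\uz=\curl\uw$ with $\dv\uz=0$ and a matching boundary condition and invoking the corresponding $H^2$-regularity on the convex polyhedron, to $\uw=\curl\uz$ with $\uz\in\undertilde{H}{}^2(\Omega)$, whence $\uw\in\undertilde{H}{}^1(\Omega)$ and $\uv\in\undertilde{H}{}^1(\Omega)$. The case $\uv\in H_0(\curl,\Omega)\cap H(\dv,\Omega)$ is symmetric: one subtracts a curl produced by a vector-potential/Dirichlet-type problem to reduce to a curl-free field $\nabla p$ with $p$ solving a Dirichlet problem, and applies $H^2$-regularity of the Dirichlet Laplacian on the convex polyhedron.

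The easy equality is essentially elementary once the zero-extension compatibility is checked, and the Fourier identity does all the work. The main obstacle is the convex case: the Fourier argument is unavailable, and one must genuinely invoke $H^2$-regularity on convex polyhedra (Grisvard) together with a Helmholtz-type splitting that preserves the relevant boundary conditions — in particular verifying that the auxiliary potentials can be chosen with the required regularity and the right traces so that the leftover harmonic field vanishes. I expect that bookkeeping of boundary conditions, rather than any single hard estimate, to be the delicate point.
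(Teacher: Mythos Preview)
The paper does not prove this lemma: it is stated with a citation to Girault--Raviart and no proof is given in the paper itself, so there is no paper-proof to compare against.

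That said, your argument for the equality $H_0(\curl,\Omega)\cap H_0(\dv,\Omega)=\undertilde{H}{}^1_0(\Omega)$ via zero extension and the Fourier identity $|\xi|^2|\widehat{\widetilde\uv}|^2=|\xi\times\widehat{\widetilde\uv}|^2+|\xi\cdot\widehat{\widetilde\uv}|^2$ is correct and standard; it works on any bounded Lipschitz domain, not just polyhedra. For the convex case, your first reduction (subtracting $\nabla\phi$ with $\phi$ solving a Neumann problem, using scalar $H^2$-regularity on convex polyhedra) is fine. The subsequent step, however---producing $\uz\in\undertilde{H}{}^2(\Omega)$ with $\uw=\curl\uz$ by solving a vector Poisson problem ``with a matching boundary condition''---is where the real difficulty hides, as you yourself note: the boundary conditions needed on $\uz$ so that $\curl\uz=\uw$ actually holds (e.g.\ $\uz\times\mathbf{n}=0$, $\dv\uz=0$ on $\Gamma$) are not componentwise Dirichlet, and $H^2$-regularity for the vector Laplacian with such conditions on a convex polyhedron is not an immediate corollary of the scalar theory. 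The classical route taken in Girault--Raviart (and refined for polyhedra by Costabel and by Amrouche--Bernardi--Dauge--Girault) avoids this by proving directly a Gaffney-type identity $\|\nabla\uv\|_0^2=\|\curl\uv\|_0^2+\|\dv\uv\|_0^2+(\text{boundary term})$ for smooth fields with one vanishing trace, the boundary term having the right sign on convex domains, and then passing to the limit by density; this sidesteps the vector-potential regularity issue entirely.
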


\begin{lemma}\label{lem:curlsur} (\cite{Birman.M;Solomyak.M1987,Dhia.A;Hazard.C;Lohrengel.S1999,Pasciak.J;Zhao.J2002,Hiptmair.R;Xu.J2007})
Given $\ueta\in H_0(\curl,\Omega)$, there exists a $\uphi\in\undertilde{H}{}^1_0(\Omega)$, such that $\curl\uphi=\curl\ueta$, and $\|\uphi\|_{1,\Omega}\leqslant C\|\ueta\|_{\curl,\Omega}$, with $C$ a generic positive constant uniform in $H_0(\curl,\Omega)$.
\end{lemma}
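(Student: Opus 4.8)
The plan is to prove the statement by constructing a \emph{regular vector potential} for $\curl\ueta$ with homogeneous trace, in the spirit of the standard regular decompositions of $H_0(\curl,\Omega)$. First I would record the properties of $\uj:=\curl\ueta\in L^2(\Omega)^3$: integrating by parts against $\nabla q$ with $q\in H^1(\Omega)$ and using $\ueta\times\bs n|_\Gamma=0$ gives $(\uj,\nabla q)=0$ for all such $q$, so $\dv\uj=0$ in $\Omega$ and the normal trace of $\uj$ on $\Gamma$ vanishes, i.e.\ $\uj\in\mathring{H}_0(\dv,\Omega)$. Consequently the extension $\tilde\uj$ of $\uj$ by zero to a fixed ball $\mathcal O$ with $\bar\Omega\subset\subset\mathcal O$ is divergence free on $\mathcal O$, with $\|\tilde\uj\|_{0,\mathcal O}=\|\curl\ueta\|_{0,\Omega}$.

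Next I would produce a potential on all of space. Let $\mathcal N$ denote the componentwise Newtonian potential on $\mathbb R^3$ and set $\uw:=\curl(\mathcal N\tilde\uj)$. Using $\dv\tilde\uj=0$ one gets $\curl\uw=-\Delta(\mathcal N\tilde\uj)+\nabla\dv(\mathcal N\tilde\uj)=\tilde\uj$, while the Calderón–Zygmund estimate for the second derivatives of the Newtonian potential yields $\uw\in\undertilde{H}{}^1_{\mrm{loc}}(\mathbb R^3)$ with $\|\uw\|_{1,\mathcal O}\lesssim\|\tilde\uj\|_{0,\mathcal O}=\|\curl\ueta\|_{0,\Omega}$. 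Restricting to $\Omega$ gives $\uw|_\Omega\in\undertilde{H}{}^1(\Omega)$ with $\curl\uw=\curl\ueta$ and $\|\uw\|_{1,\Omega}\lesssim\|\ueta\|_{\curl,\Omega}$; what is still missing is the homogeneous boundary condition.

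The final and most delicate step is the boundary correction. Since $\curl(\ueta-\uw)=0$ on the simply connected domain $\Omega$, I may write $\ueta-\uw=\nabla\chi$ with $\chi\in H^1(\Omega)$ and $\|\nabla\chi\|_{0,\Omega}\lesssim\|\ueta\|_{\curl,\Omega}$; taking tangential traces and using $\ueta\times\bs n|_\Gamma=0$ shows that the tangential trace of $\uw$ equals $-\nabla_\Gamma(\chi|_\Gamma)$, a surface gradient. Hence there is $\rho\in H^2(\Omega)$ with $\rho|_\Gamma=-\chi|_\Gamma$ and $\partial\rho/\partial\bs n|_\Gamma=\uw\cdot\bs n|_\Gamma$ (surjectivity of the trace map $H^2(\Omega)\to H^{3/2}(\Gamma)\times H^{1/2}(\Gamma)$, with norm bound by the traces of $\chi$ and $\uw$, hence by $\|\ueta\|_{\curl,\Omega}$). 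Then $\nabla\rho|_\Gamma=\uw|_\Gamma$, so $\uphi:=\uw-\nabla\rho\in\undertilde{H}{}^1_0(\Omega)$, while $\curl\uphi=\curl\uw=\curl\ueta$; collecting the bounds gives $\|\uphi\|_{1,\Omega}\lesssim\|\ueta\|_{\curl,\Omega}$.

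I expect the matching of the \emph{full} trace (not merely the tangential part) to be the main obstacle: the argument hinges on recognizing the tangential trace of $\uw$ as a surface gradient, which is precisely where the simple connectivity of $\Omega$ enters, and on the controlled $H^2$ lifting. An alternative and arguably shorter route avoids the explicit lifting: build $\uphi$ as the unique element of $N_0(\curl,\Omega)$ with $\curl\uphi=\curl\ueta$ (solvable since, for simply connected $\Omega$, $\curl$ is an isomorphism from $N_0(\curl,\Omega)$ onto $\mathring{H}_0(\dv,\Omega)$); by definition of $N_0(\curl,\Omega)$ one has $\dv\uphi=0$, so $\uphi\in H_0(\curl,\Omega)\cap H_0(\dv,\Omega)=\undertilde{H}{}^1_0(\Omega)$ by Lemma \ref{lem:capreg}, and the bound $\|\uphi\|_{1,\Omega}\lesssim\|\curl\uphi\|_{0,\Omega}=\|\curl\ueta\|_{0,\Omega}$ then follows from the closed-graph norm equivalence underlying Lemma \ref{lem:capreg} together with the Friedrichs inequality on $N_0(\curl,\Omega)$.
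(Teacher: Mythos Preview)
The paper gives no proof of this lemma; it is stated with citations only. So there is nothing to compare your argument against except correctness.

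Your main construction (Newtonian potential plus boundary correction by an $H^2$ lifting) is the right strategy and is close to what one finds in the cited references, but the lifting step as written is too optimistic on a polyhedron. You invoke surjectivity of the trace map $H^2(\Omega)\to H^{3/2}(\Gamma)\times H^{1/2}(\Gamma)$, yet for polyhedral $\Omega$ the trace space of $H^2$ is \emph{not} this product: one needs compatibility conditions along edges and at vertices (cf.\ Grisvard). You would have to argue that the pair $(-\chi|_\Gamma,\uw\cdot\bs n|_\Gamma)$ satisfies these, and that the $H^{3/2}$ control of $\chi|_\Gamma$ from $\nabla_\Gamma\chi\in H^{1/2}$ goes through across the edges. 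This is exactly where the cited proofs spend their effort; it is not a one-line step.

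Your alternative route has a genuine error. From $\uphi\in N_0(\curl,\Omega)$ you only get $(\uphi,\nabla s)=0$ for $s\in H^1_0(\Omega)$, hence $\dv\uphi=0$ in $\Omega$; this does \emph{not} give $\uphi\cdot\bs n=0$ on $\Gamma$, so you cannot place $\uphi$ in $H_0(\dv,\Omega)$ and the identity $H_0(\curl,\Omega)\cap H_0(\dv,\Omega)=\undertilde{H}{}^1_0(\Omega)$ from Lemma~\ref{lem:capreg} does not apply. What you actually have is $\uphi\in H_0(\curl,\Omega)\cap H(\dv,\Omega)$, and that space is contained in $\undertilde{H}{}^1(\Omega)$ only when $\Omega$ is convex (again Lemma~\ref{lem:capreg}), and even then you would only get $\undertilde{H}{}^1$, not $\undertilde{H}{}^1_0$. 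For a non-convex polyhedron $N_0(\curl,\Omega)\not\subset\undertilde{H}{}^1(\Omega)$ in general, so the shortcut cannot work in the generality claimed.
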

\begin{lemma}\label{lem:divsur}
(\cite{GiraultRaviart1986}) Given $\utau\in H_0(\dv,\Omega)$, there exists a $\uphi\in\undertilde{H}{}^1_0(\Omega)$, such that $\dv\uphi=\dv\utau$, and $\|\uphi\|_{1,\Omega}\leqslant C\|\utau\|_{\dv,\Omega}$, with $C$ a generic positive constant uniform in $H_0(\dv,\Omega)$.
\end{lemma}
These two well-known regular decompositions can be derived from Lemmas \ref{lem:curlsur} and \ref{lem:divsur}, respectively. 
\begin{lemma}
A decomposition of $H_0(\curl,\Omega)$ holds that
\begin{equation}\label{eq:rdhcurl}
H_0(\curl,\Omega)=\nabla H^1_0(\Omega)+\undertilde{H}{}^1_0(\Omega).
\end{equation}
Namely, given $\ueta\in H_0(\curl,\Omega)$, there exists a $w\in H^1_0(\Omega)$ and $\uphi\in\undertilde{H}{}^1_0(\Omega)$, such that 
$\ueta=\nabla w+\uphi$ and $\|\ueta\|_{\curl,\Omega}\geqslant C(\|w\|_{1,\Omega}+\|\uphi\|_{1,\Omega})$.

\end{lemma}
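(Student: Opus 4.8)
The plan is to derive the regular decomposition \eqref{eq:rdhcurl} directly from the surjectivity-with-bound statement in Lemma \ref{lem:curlsur}. Given $\ueta\in H_0(\curl,\Omega)$, Lemma \ref{lem:curlsur} produces a $\uphi\in\undertilde{H}{}^1_0(\Omega)$ with $\curl\uphi=\curl\ueta$ and $\|\uphi\|_{1,\Omega}\leqslant C\|\ueta\|_{\curl,\Omega}$. First I would set $\uzeta:=\ueta-\uphi$; then $\uzeta\in H_0(\curl,\Omega)$ and $\curl\uzeta=\mathbf{0}$. Since $\Omega$ is simply connected (as assumed in the Notation paragraph) and $\uzeta\in H_0(\curl,\Omega)$ is curl-free, there exists a scalar potential $w\in H^1_0(\Omega)$ with $\nabla w=\uzeta$; this is the standard exact-sequence fact for the de Rham complex with boundary conditions on a simply connected domain. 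That yields $\ueta=\nabla w+\uphi$, which is the asserted decomposition.

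The remaining task is the norm bound $\|w\|_{1,\Omega}+\|\uphi\|_{1,\Omega}\leqslant C\|\ueta\|_{\curl,\Omega}$. The bound on $\uphi$ is already in hand from Lemma \ref{lem:curlsur}. For $w$, note $\|\nabla w\|_{0,\Omega}=\|\uzeta\|_{0,\Omega}\leqslant\|\ueta\|_{0,\Omega}+\|\uphi\|_{0,\Omega}\lesssim\|\ueta\|_{\curl,\Omega}$, and since $w\in H^1_0(\Omega)$ the Poincaré inequality gives $\|w\|_{1,\Omega}\lesssim\|\nabla w\|_{0,\Omega}\lesssim\|\ueta\|_{\curl,\Omega}$. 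Combining the two estimates gives the claimed inequality (with the direction of the inequality as stated, $\|\ueta\|_{\curl,\Omega}\geqslant C(\|w\|_{1,\Omega}+\|\uphi\|_{1,\Omega})$). The reverse-type bound $\|\ueta\|_{\curl,\Omega}\lesssim\|w\|_{1,\Omega}+\|\uphi\|_{1,\Omega}$, which makes the decomposition genuinely stable, is immediate from $\ueta=\nabla w+\uphi$ and $\curl\ueta=\curl\uphi$.

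I do not expect a serious obstacle here: the statement is essentially a repackaging of Lemma \ref{lem:curlsur} together with the scalar-potential representation of curl-free fields with vanishing tangential trace. The one point that deserves care is the existence of $w\in H^1_0(\Omega)$ — i.e., that a curl-free field in $H_0(\curl,\Omega)$ is a gradient of a function that \emph{itself} lies in $H^1_0(\Omega)$, not merely in $H^1(\Omega)$ — which uses simple connectivity of $\Omega$ and the fact that the tangential trace of $\nabla w$ on $\partial\Omega$ encodes the tangential gradient of $w|_{\partial\Omega}$, forcing $w$ to be (locally) constant on $\partial\Omega$ and hence adjustable to vanish there. This is the standard first-cohomology argument for the de Rham sequence with boundary conditions, and on a simply connected polyhedron it is available off the shelf; I would simply cite it (e.g.\ \cite{GiraultRaviart1986}) rather than reprove it. Everything else is Poincaré and the triangle inequality.
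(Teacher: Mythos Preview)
Your proposal is correct and follows exactly the route the paper indicates: the paper does not spell out a proof but simply remarks that this decomposition ``can be derived from Lemma~\ref{lem:curlsur}'', and your argument is precisely the standard derivation from that lemma together with the exactness of the de~Rham complex with boundary conditions on a simply connected domain. The one point you flag as needing care (that the curl-free remainder in $H_0(\curl,\Omega)$ is the gradient of something in $H^1_0(\Omega)$, not merely $H^1(\Omega)$) is exactly the right thing to cite rather than reprove.
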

\begin{lemma}
A stable decomposition of $H_0(\dv,\Omega)$ holds that 
\begin{equation}\label{eq:rdhdiv}
H_0(\dv,\Omega)=\curl H_0(\curl,\Omega)+\undertilde{H}{}^1_0(\Omega).
\end{equation} 
Namely given $\utau\in H_0(\dv,\Omega)$, there exists a $\uzeta\in H_0(\curl,\Omega)$ and $\uphi\in\undertilde{H}{}^1_0(\Omega)$, such that 
$\utau=\curl\uzeta +\uphi$ and $\|\utau\|_{\dv,\Omega}\geqslant C(\|\uzeta\|_{\curl,\Omega}+\|\uphi\|_{1,\Omega})$.
\end{lemma}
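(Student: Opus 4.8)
The plan is to derive \eqref{eq:rdhdiv} directly from the surjectivity-with-bound result of Lemma \ref{lem:divsur}, in exact parallel with the way the curl decomposition \eqref{eq:rdhcurl} follows from Lemma \ref{lem:curlsur}. So, given $\utau\in H_0(\dv,\Omega)$, the first step is to apply Lemma \ref{lem:divsur} to produce $\uphi\in\undertilde{H}{}^1_0(\Omega)$ with $\dv\uphi=\dv\utau$ and $\|\uphi\|_{1,\Omega}\leqslant C\|\utau\|_{\dv,\Omega}$. Then set $\usigma:=\utau-\uphi$; by construction $\dv\usigma=0$ and $\usigma\in H_0(\dv,\Omega)$, i.e. $\usigma\in\mathring{H}_0(\dv,\Omega)$, and $\|\usigma\|_{\dv,\Omega}=\|\usigma\|_{0,\Omega}\leqslant \|\utau\|_{0,\Omega}+\|\uphi\|_{0,\Omega}\lesssim \|\utau\|_{\dv,\Omega}$.

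The second step is to represent the divergence-free remainder $\usigma$ as a curl. Here I would invoke the standard fact (valid because $\Omega$ is a simply connected polyhedron, and available from the same references as the exact sequence results, e.g. \cite{GiraultRaviart1986}) that on such a domain $\mathring{H}_0(\dv,\Omega)=\curl\,H_0(\curl,\Omega)$, and moreover that one can choose the vector potential stably: there is $\uzeta\in H_0(\curl,\Omega)$ with $\curl\uzeta=\usigma$ and $\|\uzeta\|_{\curl,\Omega}\leqslant C\|\usigma\|_{0,\Omega}$. Concretely one takes $\uzeta$ in the complement $N_0(\curl,\Omega)$ of $\nabla H^1_0(\Omega)$, on which $\curl$ is an isomorphism onto $\mathring H_0(\dv,\Omega)$ with $\|\uzeta\|_{\curl,\Omega}\cequiv\|\curl\uzeta\|_{0,\Omega}$ by the Friedrichs/Poincaré-type inequality for $N_0(\curl,\Omega)$. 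Combining the two steps gives $\utau=\curl\uzeta+\uphi$ with $\|\uzeta\|_{\curl,\Omega}+\|\uphi\|_{1,\Omega}\lesssim\|\utau\|_{\dv,\Omega}$, which is the claimed stable decomposition; the reverse inclusion $\curl H_0(\curl,\Omega)+\undertilde{H}{}^1_0(\Omega)\subset H_0(\dv,\Omega)$ is immediate since $\dv\curl=0$ and $\undertilde{H}{}^1_0(\Omega)\subset H_0(\dv,\Omega)$.

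The main obstacle — really the only non-bookkeeping point — is justifying the stable vector-potential representation of the divergence-free field $\usigma$, i.e. that the curl operator on $N_0(\curl,\Omega)$ is bounded below onto $\mathring H_0(\dv,\Omega)$ with a constant depending only on $\Omega$. This rests on (a) the exactness of the sequence $H^1_0\xrightarrow{\nabla}H_0(\curl)\xrightarrow{\curl}H_0(\dv)\xrightarrow{\dv}L^2_0$ on a simply connected polyhedron, and (b) the compact embedding $N_0(\curl,\Omega)\hookrightarrow\undertilde L^2(\Omega)$ (equivalently $H_0(\curl)\cap H(\dv 0)\hookrightarrow\undertilde L^2$ compactly), which yields the Friedrichs inequality $\|\uzeta\|_{0,\Omega}\lesssim\|\curl\uzeta\|_{0,\Omega}$ on $N_0(\curl,\Omega)$ by the usual compactness-contradiction argument. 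Both are classical for Lipschitz polyhedra; in the write-up I would simply cite them rather than reprove them, since the excerpt already treats the analogous ingredients for Lemmas \ref{lem:curlsur}--\ref{lem:divsur} as known. An alternative, should one prefer to avoid quoting the divergence-free-is-a-curl statement, is to apply Lemma \ref{lem:curlsur}-type reasoning after first peeling off the gradient part, but the direct route above is cleanest and mirrors the preceding lemma's proof structure exactly.
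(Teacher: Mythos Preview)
Your proof is correct and follows exactly the route the paper indicates: the paper does not spell out a proof but simply states that this decomposition ``can be derived from'' Lemma~\ref{lem:divsur}, which is precisely what you do---apply Lemma~\ref{lem:divsur} to peel off $\uphi\in\undertilde{H}{}^1_0(\Omega)$ with matching divergence, then represent the divergence-free remainder as a curl via exactness of the de~Rham complex with homogeneous boundary conditions on the simply connected polyhedron. Your discussion of the Friedrichs inequality on $N_0(\curl,\Omega)$ to secure the stability bound for $\uzeta$ is the right supporting ingredient and is consistent with the facts the paper takes as known (e.g.\ the identity $\mathring{H}_0(\dv,\Omega)=\curl H_0(\curl,\Omega)$ used just before~\eqref{eq:infsuphhdiv}).
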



%
%
\subsection{Subdivisions and finite element spaces}
Let $\mathcal{G}_h$ be a shape-regular tetrahedron subdivision of $\Omega$, such that $\bar\Omega=\cup_{K\in\mathcal{G}_h}\bar K$. Denote by $\mathcal{F}_h$, $\mathcal{F}_h^i$, $\mathcal{E}_h$, $\mathcal{E}_h^i$, $\mathcal{X}_h$ and $\mathcal{X}_h^i$ the set of faces, interior faces, edges, interior edges, vertices and interior vertices, respectively. For any edge $e\in\mathcal{E}_h$, denote by $\undertilde{t}{}_e$ the unit tangential vector along $e$; for any face $f\in\mathcal{F}_h$, denote by $\undertilde{n}{}_f$ the unit normal vector of $f$. Denote by $P_k(\mathcal{G}_h)$ the space of piecewise $k$-th degree polynomials on $\mathcal{G}_h$.
 
For $K$ a tetrahedron, as usual, we use $P_k(K)$ for the set of polynomials on $K$ of degrees not higher than $k$, and $P_k(F)$ for the set of polynomials of degrees not higher than $k$ on a face $F$ of $K$. Denote by $a_i$ and $F_i$ vertices and opposite faces of $K$, $i=1,2,3,4$.  The barycentre coordinates are denoted as usual by $\lambda_i$, $i=1,2,3,4$. 
Denote $q_0=\lambda_1\lambda_2\lambda_3\lambda_4$, and $q_i=\lambda_j\lambda_k\lambda_l$ with $\{j,k,l\}=\{1,2,3,4\}\setminus\{i\}$, $i=1,2,3,4$. Besides, define shape function spaces as
\begin{itemize}
\item $P^e(K):={\rm span}\{\lambda_i\lambda_j,1\leqslant i\neq j\leqslant 4\}$;
\item $P^f(K):={\rm span}\{q_i,1\leqslant i\leqslant 4\}$;
\item $\mathbb{E}(K):=\{\uu+\uv\times\ux:\uu,\uv\in \mathbf{R}^3\}$;
\item $\mathbb{F}(K):=\{\uu+v\ux:\uu\in\mathbb{R}^3,v\in\mathbb{R}\}$.
\end{itemize}
Then define finite element functions/spaces as
\begin{itemize}
\item $\mathsf{L}_h:=\{w\in H^1(\Omega):w|_K\in P_1(K),\ \forall\,K\in \mathcal{G}_h\}$, and $\mathsf{L}_{h0}:=\mathsf{L}_h\cap H^1_0(\Omega)$; $\undertilde{\mathsf{L}}{}_{h(0)}:=(\mathsf{L}_{h(0)})^3$;

\item $\mathsf{L}_h^{e}:=\{w\in H^1(\Omega):w|_K\in P^{e}(K),\ \forall\,K\in \mathcal{G}_h\}$, and $\mathsf{L}_{h0}^{e}:=\mathsf{L}_h^{e}\cap H^1_0(\Omega)$;

\item $\mathsf{L}_h^{f}:=\{w\in H^1(\Omega):w|_K\in P^{f}(K),\ \forall\,K\in \mathcal{G}_h\}$, and $\mathsf{L}_{h0}^{f}:=\mathsf{L}_h^{f}\cap H^1_0(\Omega)$;

\item $b_e\in \mathsf{L}^{e}_h$ such that $b_{e}=0$ on $e'\in\mathcal{E}_h\setminus \{e\}$; $b_f\in\mathsf{L}^{f}_h$ such that $b_{f}=0$ on $f'\in\mathcal{F}_h\setminus\{f\}$;

\item $\undertilde{\mathsf{L}}{}_h^e:={\rm span}\{b_e\undertilde{t}{}_e\}_{e\in \mathcal{E}_h}$, and $\undertilde{\mathsf{L}}{}_{h0}^e=\undertilde{\mathsf{L}}{}_h^e\cap \undertilde{H}{}^1_0(\Omega)$;

\item $\undertilde{\mathsf{L}}{}_{h}^{f}:={\rm span}\{b_f\undertilde{n}{}_f\}_{f\in \mathcal{F}_h}$, and $\undertilde{\mathsf{L}}{}_{h0}^{f}:=\undertilde{\mathsf{L}}{}_{h}^{f}\cap \undertilde{H}{}^1_0(\Omega)$;

\item $\undertilde{\mathsf L}{}_{h0}^{+e}:=\undertilde{\mathsf L}{}_{h0}+\undertilde{\mathsf L}{}_{h0}^e$; $\undertilde{\mathsf L}{}_{h0}^{+f}:=\undertilde{\mathsf L}{}_{h0}+\undertilde{\mathsf L}{}_{h0}^f$;

\item $\mathbb{N}_h:=\{\uw\in H(\curl,\Omega):\uw|_K\in \mathbb{E}(K),\ \forall\,K\in \mathcal{G}_h\}$, and $\mathbb{N}_{h0}:=\mathbb{N}_h\cap H_0(\curl,\Omega)$;

\item $\mathbb{RT}_h:=\{\uw\in H(\dv,\Omega):\uw|_K\in \mathbb{F}(K),\ \forall\,K\in \mathcal{G}_h\}$, and $\mathbb{RT}_{h0}:=\mathbb{RT}_{h}\cap H_0(\dv,\Omega)$.
\end{itemize}

Finally, use $\mathcal{L}^0_{h}$ for the space of piecewise constant, and $\mathcal{L}^0_{h0}:=\mathcal{L}^0_{h}\cap L^2_0(\Omega)$. 

\begin{lemma}\label{lem:sdH1}
There exists a constant $C$, such that, for any $w_1\in\mathsf{L}_{h0}$ and $w_2\in\mathsf{L}^e_{h0}$(or $w_2\in\mathsf{L}^f_{h0}$),
\begin{equation}
\|w_1+w_2\|_{1,\Omega}\cequiv C\left[\|w_1\|_{1,\Omega}+(\sum_{K\in\mathcal{G}_h}h_K^{-2}\|w_2\|_{0,K}^2)^{1/2}\right].
\end{equation}
\end{lemma}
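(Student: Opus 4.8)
The plan is to reduce the global equivalence to three elementary facts on a generic tetrahedron $K\in\mathcal G_h$, passed through the affine scaling to a fixed reference element $\hat K$. Write $w:=w_1+w_2$. The starting observation is that every function in $P^e(K)$ (resp.\ $P^f(K)$) vanishes at the four vertices of $K$; since a $P_1(K)$ function vanishing at all four vertices is zero, one has $P_1(K)\cap P^e(K)=\{0\}$ (resp.\ $P_1(K)\cap P^f(K)=\{0\}$). Hence on each $K$ the splitting $w|_K=w_1|_K+w_2|_K$ is the \emph{unique} decomposition of $w|_K$ into a $P_1(K)$ part and a bubble part, and in particular $w_2|_K$ is the image of $w|_K$ under a fixed linear projection that annihilates all of $P_1(K)$.

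One direction (``$\lesssim$'') is routine. By the triangle inequality $\|w\|_{1,\Omega}\le\|w_1\|_{1,\Omega}+\|w_2\|_{1,\Omega}$, and, transplanting to $\hat K$, the norm equivalence $\|\hat w_2\|_{1,\hat K}\cequiv\|\hat w_2\|_{0,\hat K}$ on the finite-dimensional space $P^e(\hat K)$ (resp.\ $P^f(\hat K)$)—valid because functions there vanish at the vertices, so $|\cdot|_{1,\hat K}$ has trivial kernel on it and is a genuine norm—rescales to $\|w_2\|_{1,K}\cequiv h_K^{-1}\|w_2\|_{0,K}$ on each $K$. Summing and using that $h_K$ is uniformly bounded gives $\|w\|_{1,\Omega}\lesssim\|w_1\|_{1,\Omega}+(\sum_K h_K^{-2}\|w_2\|_{0,K}^2)^{1/2}$.

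For the reverse inequality I would first prove the local bound $|w_2|_{1,K}\lesssim|w|_{1,K}$, with a constant depending only on the shape-regularity. On $\hat K$, $\hat w_2$ is the image of $\hat w$ under the fixed bounded projection onto the bubble factor, which kills $P_1(\hat K)$, so $\|\hat w_2\|_{1,\hat K}\lesssim\|\hat w-\hat p\|_{1,\hat K}$ for every $\hat p\in P_1(\hat K)$; taking $\hat p$ equal to the mean of $\hat w$ and invoking Poincaré--Wirtinger on $\hat K$ yields $\|\hat w_2\|_{1,\hat K}\lesssim|\hat w|_{1,\hat K}$, which rescales to $|w_2|_{1,K}\lesssim|w|_{1,K}$. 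Summing over $K$ gives $|w_2|_{1,\Omega}\lesssim\|w\|_{1,\Omega}$. Combining this with $\|w_2\|_{1,K}\cequiv h_K^{-1}\|w_2\|_{0,K}$ from the previous step—so that $(\sum_K h_K^{-2}\|w_2\|_{0,K}^2)^{1/2}\cequiv|w_2|_{1,\Omega}$ and $\|w_2\|_{1,\Omega}\lesssim|w_2|_{1,\Omega}$, again using $h_K\lesssim1$—one obtains $(\sum_K h_K^{-2}\|w_2\|_{0,K}^2)^{1/2}\lesssim\|w\|_{1,\Omega}$ and then $\|w_1\|_{1,\Omega}\le\|w\|_{1,\Omega}+\|w_2\|_{1,\Omega}\lesssim\|w\|_{1,\Omega}$; adding the two bounds closes this direction.

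The main obstacle is exactly the scaling in this last step: a naive use of the finite-dimensional norm equivalence on $\hat K$, bounding $\hat w_2$ by $\hat w$ in the full $H^1(\hat K)$ norm, would after rescaling leave an uncontrolled term $\sum_K h_K^{-2}\|w\|_{0,K}^2$, which is \emph{not} dominated by $\|w\|_{1,\Omega}^2$. What rescues the argument is that the bubble projection annihilates all of $P_1(\hat K)$, not merely the constants, so the offending $L^2$ contribution can be subtracted off before applying Poincaré--Wirtinger; the rest is bookkeeping with $\|v\|_{0,K}^2\sim h_K^3\|\hat v\|_{0,\hat K}^2$, $|v|_{1,K}^2\sim h_K\,|\hat v|_{1,\hat K}^2$, and the uniform bound on $h_K$.
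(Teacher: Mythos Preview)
Your argument is correct and follows essentially the same route as the paper: a local estimate on each element obtained by scaling to a reference tetrahedron and exploiting that $P_1(K)\cap P^e(K)=\{0\}$ (resp.\ $P^f(K)$), followed by summation, with the easy direction handled by the triangle inequality and an inverse estimate. The paper compresses your projection/Poincar\'e--Wirtinger step into the phrase ``direct calculation leads to $\|\nabla(p+q)\|_{0,K}^2\geqslant C_K(\|\nabla p\|_{0,K}^2+h_K^{-2}\|q\|_{0,K}^2)$''; your discussion of the scaling obstacle and why annihilating all of $P_1$ (not just constants) is needed is precisely the content behind that line.
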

\begin{proof}
Given $K\in \mathcal{G}_h$, direct calculation leads to that 
$$
\|\nabla(p+q)\|_{0,K}^2\geqslant C_K(\|\nabla p\|_{0,K}^2+h_k^{-2}\| q\|_{0,K}^{2}),
$$
for any $p\in P_1(K)$ and $q\in P^e(K)$ or $q\in P^f(K)$ with $C_K$ a constant depending on the regularity of $K$. Make a summation on all cells $K$, and we can obtain 
$$
\|\nabla (w_1+w_2)\|_{0,\Omega}\geqslant C(\|\nabla w_1\|_{0,\Omega}+(\sum_{K\in\mathcal{G}_h}h_K^{-2}\|w_2\|_{0,K}^2)^{1/2}).
$$
The other direction holds by the triangle inequality and the inverse estimate. The proof is completed. 
\end{proof}

Let $\Pi_h^\mathbb{N}$ be the nodal interpolation to $\mathbb{N}_h$ defined such that, for $\ueta$ that makes the operation doable,
$$
\int_e\Pi_h^\mathbb{N}\ueta\cdot\undertilde{t}{}_e=\int_e\ueta\cdot\undertilde{t}{}_e,\ \ \forall\,e\in \mathcal{E}_h,
$$
and let $\Pi_h^\mathbb{RT}$ be the nodal interpolation to $\mathbb{RT}_h$ defined such that, for $\utau$ that makes the operation doable,
$$
\int_f\Pi_h^\mathbb{RT}\utau\cdot\undertilde{n}{}_f=\int_f\utau\cdot\undertilde{n}{}_f,\ \ \forall\,f\in \mathcal{F}_h.
$$
\begin{lemma}\label{lem:HXlemma}(Lemma 5.1 of \cite{Hiptmair.R;Xu.J2007})
\begin{enumerate}
\item For any $\ueta{}_h\in \mathbb{N}_{h0}$, there are $\upsi{}_h\in \undertilde{\mathsf L}{}_{h0}$, $p_h\in \mathsf{L}_{h0}$, and $\tilde{\ueta}{}_h\in\mathbb{N}_{h0}$, such that 
$$
\ueta{}_h=\tilde{\ueta}{}_h+\Pi_h^\mathbb{N}\upsi{}_h+\nabla p_h,
$$
and
$$
\|h^{-1}\tilde{\ueta}{}_h\|_{0,\Omega}^2+\|\upsi{}_h\|_{1,\Omega}^2+\|p_h\|_{1,\Omega}^2\leqslant \|\ueta{}_h\|_{\curl,\Omega}^2.
$$

\item For any $\utau{}_h\in \mathbb{RT}_{h0}$, there are $\upsi{}_h\in \undertilde{\mathsf L}{}_{h0}$, $\ueta{}_h\in \mathbb{N}_{h0}$, and $\tilde{\utau}{}_h\in\mathbb{RT}_{h0}$, such that 
$$
\utau{}_h=\tilde{\utau}{}_h+\Pi_h^\mathbb{RT}\upsi{}_h+\curl \ueta{}_h,
$$
and
$$
\|h^{-1}\tilde{\utau}{}_h\|_{0,\Omega}^2+\|\upsi{}_h\|_{1,\Omega}^2+\|\ueta{}_h\|_{\curl,\Omega}^2\leqslant C \|\utau{}_h\|_{\dv,\Omega}^2.
$$
\end{enumerate}
\end{lemma}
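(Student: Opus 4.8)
The plan is to derive both splittings from the continuous regular decompositions \eqref{eq:rdhcurl} and \eqref{eq:rdhdiv} by post-composing with a \emph{commuting}, graph-norm-bounded quasi-interpolation operator onto the finite element de~Rham complex, and then to trade the interpolated $\undertilde{H}{}^1_0$-component for the much cheaper nodal interpolant $\Pi_h^\mathbb{N}$ (resp.\ $\Pi_h^\mathbb{RT}$) applied to a Scott--Zhang regularization, bounding the mismatch by a Bramble--Hilbert/scaling argument. I describe assertion (1) in detail; assertion (2) is entirely parallel.

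First I would fix a boundary-compatible commuting quasi-interpolation pair $Q_h^{\mathrm{g}}\colon H^1_0(\Omega)\to\mathsf{L}_{h0}$ and $Q_h^{\mathrm{c}}\colon H_0(\curl,\Omega)\to\mathbb{N}_{h0}$ (of the smoothed-projection type) which are projections onto the respective finite element spaces, are bounded in the $H^1$- resp.\ $H(\curl)$-graph norm uniformly in $h$, satisfy $Q_h^{\mathrm{c}}\nabla=\nabla Q_h^{\mathrm{g}}$, and obey the local first-order bounds $\|\uphi-Q_h^{\mathrm{c}}\uphi\|_{0,K}\lesssim h_K\|\uphi\|_{1,\tilde\omega_K}$ on a finite-overlap patch $\tilde\omega_K$ together with $\|Q_h^{\mathrm{g}}w\|_{1,\Omega}\lesssim\|w\|_{1,\Omega}$. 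Given $\ueta_h\in\mathbb{N}_{h0}$, \eqref{eq:rdhcurl} supplies $w\in H^1_0(\Omega)$ and $\uphi\in\undertilde{H}{}^1_0(\Omega)$ with $\ueta_h=\nabla w+\uphi$ and $\|w\|_{1,\Omega}+\|\uphi\|_{1,\Omega}\lesssim\|\ueta_h\|_{\curl,\Omega}$. Since $\ueta_h$ is discrete, $Q_h^{\mathrm{c}}\ueta_h=\ueta_h$, so the commuting property yields $\ueta_h=\nabla p_h+Q_h^{\mathrm{c}}\uphi$ with $p_h:=Q_h^{\mathrm{g}}w\in\mathsf{L}_{h0}$ and $\|p_h\|_{1,\Omega}\lesssim\|\ueta_h\|_{\curl,\Omega}$.

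Next I would let $\upsi_h\in\undertilde{\mathsf L}{}_{h0}$ be the Scott--Zhang interpolant of $\uphi$, so that $\|\upsi_h\|_{1,\Omega}\lesssim\|\uphi\|_{1,\Omega}$ and $\|\uphi-\upsi_h\|_{0,K}\lesssim h_K\|\uphi\|_{1,\tilde\omega_K}$, and set $\tilde{\ueta}{}_h:=\ueta_h-\Pi_h^\mathbb{N}\upsi_h-\nabla p_h=Q_h^{\mathrm{c}}\uphi-\Pi_h^\mathbb{N}\upsi_h\in\mathbb{N}_{h0}$. On each $K$,
\[
\|\tilde{\ueta}{}_h\|_{0,K}\leqslant\|\uphi-Q_h^{\mathrm{c}}\uphi\|_{0,K}+\|\uphi-\upsi_h\|_{0,K}+\|\upsi_h-\Pi_h^\mathbb{N}\upsi_h\|_{0,K}\lesssim h_K\|\uphi\|_{1,\tilde\omega_K},
\]
where the last term is controlled since $\Pi_h^\mathbb{N}$ reproduces constant vector fields on $K$ (as $\mathbb{R}^3\subset\mathbb{E}(K)$) and is uniformly $L^2$-bounded on $P_1(K)^3$ after scaling to the reference cell, whence $\|\upsi_h-\Pi_h^\mathbb{N}\upsi_h\|_{0,K}\lesssim h_K|\upsi_h|_{1,K}$. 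Multiplying by $h_K^{-2}$, summing over $K$ and using the finite overlap of patches gives $\|h^{-1}\tilde{\ueta}{}_h\|_{0,\Omega}\lesssim\|\uphi\|_{1,\Omega}\lesssim\|\ueta_h\|_{\curl,\Omega}$; together with the bounds on $p_h$ and $\upsi_h$ this proves (1). For (2) I would use \eqref{eq:rdhdiv} instead, the commuting pair $Q_h^{\mathrm{c}},Q_h^{\mathrm{d}}$ with $Q_h^{\mathrm{d}}\curl=\curl Q_h^{\mathrm{c}}$ onto $\mathbb{N}_{h0},\mathbb{RT}_{h0}$, write $\utau_h=\curl\uzeta+\uphi$, put $\ueta_h:=Q_h^{\mathrm{c}}\uzeta$ and $\upsi_h:=$ the Scott--Zhang interpolant of $\uphi$, and set $\tilde{\utau}{}_h:=\utau_h-\Pi_h^\mathbb{RT}\upsi_h-\curl\ueta_h=Q_h^{\mathrm{d}}\uphi-\Pi_h^\mathbb{RT}\upsi_h$; since $\mathbb{R}^3\subset\mathbb{F}(K)$ the same scaling bound holds for $\Pi_h^\mathbb{RT}$ and the argument is identical.

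The main obstacle will be the availability of the boundary-compatible commuting quasi-interpolation operators $Q_h^{\mathrm{g}},Q_h^{\mathrm{c}},Q_h^{\mathrm{d}}$ with the \emph{exact} commuting relations, uniform graph-norm stability and local first-order approximation, all respecting the homogeneous traces; producing these (or importing them from the smoothed-projection literature) is the substantive ingredient, and it is exactly the point where a naive argument using only $\Pi_h^\mathbb{N}$, the continuous decomposition and a discrete Helmholtz splitting fails — that route gives $\|\tilde{\ueta}{}_h\|_{\curl,\Omega}\lesssim\|\ueta_h\|_{\curl,\Omega}$ but not the crucial factor $h^{-1}$. I would also remark that the slack term cannot be removed: a dimension count shows $\nabla\mathsf{L}_{h0}+\Pi_h^\mathbb{N}\undertilde{\mathsf L}{}_{h0}$ is a proper subspace of $\mathbb{N}_{h0}$, and $\tilde{\ueta}{}_h$ carries precisely the ``high-frequency'' component, which by the inverse inequality is of size $O(h)$ in $L^2$ per unit $\curl$-norm.
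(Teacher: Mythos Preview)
The paper does not prove this lemma; it is quoted from Hiptmair--Xu (Lemma~5.1 of \cite{Hiptmair.R;Xu.J2007}) and used as a black box. Your argument is correct and is essentially a clean modern rendering of the strategy in that reference: apply the continuous regular decomposition \eqref{eq:rdhcurl} (resp.\ \eqref{eq:rdhdiv}) to the discrete field, pull the scalar/curl potential back via a commuting bounded quasi-interpolation so the identity $Q_h^{\mathrm c}\ueta_h=\ueta_h$ closes the decomposition exactly, replace the $\undertilde{H}{}^1_0$-component by its Scott--Zhang interpolant, and absorb the three $O(h)$ mismatches into $\tilde\ueta_h$. Your triangle-inequality step $\tilde\ueta_h=(Q_h^{\mathrm c}\uphi-\uphi)+(\uphi-\upsi_h)+(\upsi_h-\Pi_h^{\mathbb N}\upsi_h)$ is the right bookkeeping, and the third term is indeed controlled by Lemma~\ref{lem:intcurlpolyn}. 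The only caveat, which you already flag, is that the boundary-compatible smoothed projections with the exact commuting relation and local approximation are a nontrivial import; they are available (Sch\"oberl, Christiansen--Winther, Falk--Winther), but one should cite a version that both preserves homogeneous traces and is a projection on the discrete space.
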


\begin{lemma}\label{lem:intcurlpolyn}
For any integer $s>0$, there is a $C=C(s)>0$, such that 
$$
\|\uphi-\Pi_h^\mathbb{N}\uphi\|_{0,\Omega}\leqslant Ch|\uphi|_{1,\Omega},\ \ \forall\,\uphi\in(P_s(\mathcal{G}_h))^3\cap \undertilde{H}{}^1(\Omega).
$$

\end{lemma}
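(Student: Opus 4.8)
The plan is the usual affine scaling argument on a reference tetrahedron, but with one twist that dictates the form of the hypothesis: in three dimensions the N\'ed\'elec nodal interpolation $\Pi_h^\mathbb{N}$ is \emph{not} bounded on $\undertilde{H}{}^1$, since edge traces of $H^1$ functions are not controlled. Hence the Bramble--Hilbert step cannot be run on $\undertilde{H}{}^1(\widehat{K})$; I would run it instead on the finite-dimensional space $(P_s(\widehat{K}))^3$, on which all norms are equivalent. This is exactly why $\uphi$ must be restricted to $(P_s(\mathcal{G}_h))^3$.

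\emph{Step 1 (reference estimate).} I would fix a reference tetrahedron $\widehat{K}$ and let $\widehat{\Pi}:(P_s(\widehat{K}))^3\to\mathbb{E}(\widehat{K})$ be the operator determined by the six tangential edge moments $\int_{\widehat{e}}(\cdot\,)\cdot\widehat{\ut}_{\widehat{e}}\,d\widehat{s}$. This is well defined (polynomials have edge traces, and $\mathbb{E}(\widehat{K})$, the lowest-order first-kind N\'ed\'elec space with $\dim\mathbb{E}(\widehat{K})=6$, is unisolvent for the six edge moments) and reproduces $\mathbb{E}(\widehat{K})$, in particular every constant vector field. Therefore $L:\widehat{\uphi}\mapsto\widehat{\uphi}-\widehat{\Pi}\widehat{\uphi}$ is a linear map on the finite-dimensional space $(P_s(\widehat{K}))^3$ that vanishes on $\mathbb{R}^3$. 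Since a polynomial with vanishing gradient is constant, $|\cdot|_{1,\widehat{K}}$ is a norm on the quotient $(P_s(\widehat{K}))^3/\mathbb{R}^3$, and norm equivalence on finite-dimensional spaces yields a constant $C(s)$ with
\[
\|\widehat{\uphi}-\widehat{\Pi}\widehat{\uphi}\|_{0,\widehat{K}}\leqslant C(s)\,|\widehat{\uphi}|_{1,\widehat{K}},\qquad\forall\,\widehat{\uphi}\in(P_s(\widehat{K}))^3.
\]

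\emph{Step 2 (transport and assemble).} For $K\in\mathcal{G}_h$ with affine map $F_K:\widehat{K}\to K$, I would pass to the covariant (curl-conforming) Piola transform $\widehat{\uphi}:=(DF_K)^{\mrm{T}}(\uphi\circ F_K)$. Since $F_K$ is affine, $\widehat{\uphi}\in(P_s(\widehat{K}))^3$, and since tangential line integrals are invariant under this transform, $\widehat{\Pi}$ commutes with it, so $\widehat{\Pi_h^\mathbb{N}\uphi}\,|_K=\widehat{\Pi}(\widehat{\uphi|_K})$; here $\uphi\in\undertilde{H}{}^1(\Omega)$ is used so that tangential traces are single-valued and the cellwise interpolants glue into the global $\Pi_h^\mathbb{N}\uphi$. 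Standard affine scaling bounds together with shape regularity, $\|DF_K\|\lesssim h_K$, $\|(DF_K)^{-1}\|\lesssim h_K^{-1}$, $|\det DF_K|\cequiv h_K^3$, then turn Step 1 into
\[
\|\uphi-\Pi_h^\mathbb{N}\uphi\|_{0,K}\lesssim\|(DF_K)^{-1}\|\,\|DF_K\|^{2}\,C(s)\,|\uphi|_{1,K}\lesssim C(s)\,h_K\,|\uphi|_{1,K};
\]
squaring, summing over $K\in\mathcal{G}_h$, and using $h_K\leqslant h$ gives $\|\uphi-\Pi_h^\mathbb{N}\uphi\|_{0,\Omega}\leqslant C(s)\,h\,|\uphi|_{1,\Omega}$, as claimed.

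The main obstacle is precisely Step 1: the familiar template ``interpolation bounded on $H^1(\widehat{K})$ and exact on constants $\Rightarrow$ Bramble--Hilbert'' is unavailable because $\Pi_h^\mathbb{N}$ is unbounded on $\undertilde{H}{}^1$ in three dimensions. Restricting to $(P_s(\widehat{K}))^3$ and invoking finite-dimensional norm equivalence circumvents this, at the price of a constant depending on $s$; the commuting-diagram property of the covariant Piola transform and the affine scaling estimates are then routine.
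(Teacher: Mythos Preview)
Your argument is correct, but it follows a different route from the paper's. The paper does not run a Bramble--Hilbert/norm-equivalence argument on $(P_s(\widehat K))^3$; instead it invokes a known interpolation estimate (citing Boffi--Conforti--Gastaldi) valid for general $\upsi\in\undertilde H{}^1(K)$ with $\curl\upsi\in\undertilde L{}^p(K)$, $p>2$:
\[
\|\upsi-\Pi_h^{\mathbb N}\upsi\|_{0,K}\leqslant C_p\bigl(h_K|\upsi|_{1,K}+h_K^{5/2-3/p}\|\curl\upsi\|_{L^p(K)}\bigr),
\]
and then uses the inverse inequality $\|\curl\uphi\|_{L^p(K)}\leqslant C_s\,h_K^{3/p-3/2}\|\curl\uphi\|_{L^2(K)}$ on the polynomial $\uphi$ to absorb the $L^p$ term back into $h_K|\uphi|_{1,K}$. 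Thus both proofs resolve the same obstruction you identified---that $\Pi_h^{\mathbb N}$ is unbounded on $\undertilde H{}^1$ in three dimensions---but in complementary ways: the paper upgrades the \emph{function regularity} (adding an $L^p$ control of the curl, available for free on polynomials via the inverse estimate), whereas you restrict the \emph{domain} to a finite-dimensional space and exploit norm equivalence there. Your approach is more self-contained (no external estimate needed) and makes the origin of the $s$-dependence transparent; the paper's approach has the virtue of isolating a reusable intermediate bound valid beyond piecewise polynomials. Either way the constant depends on $s$, and your scaling computation with the covariant Piola transform is correct.
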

\begin{proof}
Let $K$ be a tetrahedron of $\mathcal{G}_h$. By scaling argument (c.f. Page 257 of 
\cite{Boffi.D;Conforti.M;Gastaldi.L2006}), given $p>2$, there exists a constant $C_p>0$, such that if $\ueta\in \undertilde{H}^1(K)$ and $\curl\upsi\in \undertilde{L}^p(K)$, then 
$$
\|\upsi-\Pi_h^\mathbb{N}\upsi\|_{0,K}\leqslant C_p(h_K|\upsi|_{1,K}+h_K^{5/2-3/p}\|\curl\upsi\|_{L^p(K)}),\ \ \forall\,K\in \mathcal{G}_h.
$$

Therefore, by inverse inequality, given $\uphi\in (P_s(K))^3$,
\begin{multline}
\|\uphi-\Pi_h^\mathbb{N}\uphi\|_{0,K}\leqslant C_p(h_K|\uphi|_{1,K}+h_K^{5/2-3/p}\|\curl\uphi\|_{L^p(K)})
\\
\leqslant C_p(h_K|\uphi|_{1,K}+C_sh_K^{5/2-3/p}h_K^{3/p-3/2}\|\curl\uphi\|_{L^2(K)})\leqslant C(s)h_K|\uphi|_{1,K}.
\end{multline}
The proof is completed. 
\end{proof}

\subsection{A discretised analogue of the decomposition \eqref{eq:rdhcurl}}

We are going to construct the result below.

\begin{theorem}\label{thm:dsdhcurl}
The decomposition below is stable:
\begin{equation}
\mathbb{N}_{h0}=\nabla \mathsf{L}_{h0}+\Pi_h^\mathbb{N} \undertilde{\mathsf{L}}{}^{+e}_{h0}.
\end{equation}
Precisely, given $\ueta{}_h\in \mathbb{N}_{h0}$, there exist a $w_h\in \mathsf{L}_{h0}$ and a $\uphi{}_h\in\undertilde{\mathsf{L}}{}^{+e}_{h0}$, such that 
\begin{equation}
\|w_h\|_{1,\Omega}+\|\uphi{}_h\|_{1,\Omega}\leqslant C\|\ueta{}_h\|_{\curl,\Omega}\quad\mbox{and}\ \ \ \ueta{}_h=\nabla w_h+\Pi_h^\mathbb{N}\uphi{}_h.
\end{equation}
\end{theorem}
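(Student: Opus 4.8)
The plan is to discretize the continuous decomposition \eqref{eq:rdhcurl} by combining the abstract decomposition machinery of Lemma \ref{lem:HXlemma}(1) with the local bubble corrector from Lemma \ref{lem:sdH1} and the interpolation estimate of Lemma \ref{lem:intcurlpolyn}. Starting from $\ueta_h\in\mathbb{N}_{h0}$, apply the Hiptmair--Xu splitting (Lemma \ref{lem:HXlemma}(1)): write $\ueta_h=\tilde\ueta_h+\Pi_h^\mathbb{N}\upsi_h+\nabla p_h$ with $\upsi_h\in\undertilde{\mathsf L}_{h0}$, $p_h\in\mathsf L_{h0}$, $\tilde\ueta_h\in\mathbb{N}_{h0}$, and the stability bound $\|h^{-1}\tilde\ueta_h\|_{0,\Omega}^2+\|\upsi_h\|_{1,\Omega}^2+\|p_h\|_{1,\Omega}^2\lesssim\|\ueta_h\|_{\curl,\Omega}^2$. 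The candidate output will be $w_h:=p_h\in\mathsf L_{h0}$ and $\uphi_h:=\upsi_h+(\text{a correction in }\undertilde{\mathsf L}_{h0}^e)$, so that $\uphi_h\in\undertilde{\mathsf L}_{h0}^{+e}$ and $\Pi_h^\mathbb{N}\uphi_h$ absorbs the leftover term $\tilde\ueta_h$.

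The key point is therefore to represent $\tilde\ueta_h$ as $\Pi_h^\mathbb{N}$ of something in $\undertilde{\mathsf L}_{h0}^e$ (the tangential edge bubbles). Since $\tilde\ueta_h\in\mathbb{N}_{h0}$ is a lowest-order N\'ed\'elec field, it is determined by its edge degrees of freedom $\int_e\tilde\ueta_h\cdot\undertilde t_e$; I would define $\ubeta_h\in\undertilde{\mathsf L}_{h0}^e$ by choosing, on each interior edge $e$, the coefficient of $b_e\undertilde t_e$ so that $\int_e\ubeta_h\cdot\undertilde t_e=\int_e\tilde\ueta_h\cdot\undertilde t_e$; since $b_{e'}$ vanishes on all edges $e'\neq e$, this is well-defined edge by edge and, crucially, gives $\Pi_h^\mathbb{N}\ubeta_h=\tilde\ueta_h$. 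Then $\ueta_h=\nabla p_h+\Pi_h^\mathbb{N}(\upsi_h+\ubeta_h)$ exactly, and I set $w_h=p_h$, $\uphi_h=\upsi_h+\ubeta_h\in\undertilde{\mathsf L}_{h0}^{+e}$.

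It remains to bound $\|\ubeta_h\|_{1,\Omega}$ by $\|h^{-1}\tilde\ueta_h\|_{0,\Omega}$, since then $\|w_h\|_{1,\Omega}+\|\uphi_h\|_{1,\Omega}\le\|p_h\|_{1,\Omega}+\|\upsi_h\|_{1,\Omega}+\|\ubeta_h\|_{1,\Omega}\lesssim\|\ueta_h\|_{\curl,\Omega}$ by the Hiptmair--Xu estimate. This is a standard scaling argument on the reference tetrahedron: on a single cell $K$, the coefficient of $b_e\undertilde t_e$ is controlled by $|\int_e\tilde\ueta_h\cdot\undertilde t_e|$, which scales like $h_K\cdot h_K^{-3/2}\|\tilde\ueta_h\|_{0,K}$ (edge length times the $L^2$-norm, using an inverse estimate for the lowest-order polynomial $\tilde\ueta_h$), while $\|\nabla(b_e\undertilde t_e)\|_{0,K}\lesssim h_K^{1/2}$; multiplying gives $\|\nabla\ubeta_h\|_{0,K}\lesssim h_K^{-1}\|\tilde\ueta_h\|_{0,K}$, and summing over $K$ yields $\|\ubeta_h\|_{1,\Omega}\lesssim\|h^{-1}\tilde\ueta_h\|_{0,\Omega}$ (the $L^2$-part of $\ubeta_h$ is lower order by the inverse inequality, or one uses Lemma \ref{lem:sdH1} directly for $\|\upsi_h+\ubeta_h\|_{1,\Omega}$). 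The main obstacle I anticipate is purely bookkeeping at the boundary: one must check that $\ubeta_h$ indeed lands in $\undertilde{\mathsf L}_{h0}^e$, i.e., that the edge coefficients on boundary edges vanish — this follows because $\tilde\ueta_h\in\mathbb{N}_{h0}$ has vanishing tangential trace, so $\int_e\tilde\ueta_h\cdot\undertilde t_e=0$ on every boundary edge $e$. Lemma \ref{lem:intcurlpolyn} is not strictly needed for this first decomposition but records the ingredient (polynomial interpolation error) that makes the analogous argument for the $\mathbb{RT}$-case and the later error analysis go through; here everything is exact because $\Pi_h^\mathbb{N}$ is a projection onto $\mathbb{N}_{h0}$ and reproduces $\tilde\ueta_h$.
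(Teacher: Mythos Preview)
Your proposal is correct and follows essentially the same route as the paper: apply the Hiptmair--Xu splitting from Lemma~\ref{lem:HXlemma}(1), then lift the small remainder $\tilde\ueta_h$ to an edge-bubble field $\ubeta_h\in\undertilde{\mathsf L}{}_{h0}^e$ with matching edge moments (the paper calls it $\uphi_h^\eta$), and set $\uphi_h=\upsi_h+\ubeta_h$. The only cosmetic difference is that the paper bounds $\|\uphi_h\|_{1,\Omega}$ via the norm equivalence $\|\tilde\ueta_h\|_{0,\Omega}\cequiv\|\uphi_h^\eta\|_{0,\Omega}$ together with Lemma~\ref{lem:sdH1}, whereas you sketch the direct scaling argument first and then note Lemma~\ref{lem:sdH1} as the alternative---both arrive at the same estimate.
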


\begin{proof}
Given $\ueta{}_h\in \mathbb{N}_{h0}$, by Lemma \ref{lem:HXlemma}, there exist $\tilde{\ueta}{}_h\in\mathbb{N}_{h0}$, $\upsi{}_h\in \undertilde{\mathsf{L}}{}_{h0}$ and $w_h\in \mathsf{L}{}_{h0}$, such that 
$$
h^{-1}\|\tilde{\ueta}{}_h\|_{0,\Omega}+\|\upsi{}_h\|_{1,\Omega}+\|w_h\|_{1,\Omega}\leqslant C\|\ueta{}_h\|_{\curl,\Omega},
$$
and
$$
\ueta{}_h=\tilde{\ueta}{}_h+\Pi_h^\mathbb{N}\upsi{}_h+\nabla w_h.
$$
Let $\uphi{}_h^\eta\in \undertilde{\mathsf{L}}{}_{h}^e$ be such that $\int_e\uphi{}_h^\eta\cdot\undertilde{t}{}_e=\int_e\tilde{\ueta}{}_h\cdot\undertilde{t}{}_e$ for any $e\in \mathcal{E}_h$. Then $\tilde{\ueta}{}_h=\Pi_h^\mathbb{N}\uphi{}_h^\eta$, and $\|\tilde{\ueta}{}_h\|_{0,\Omega}\cequiv \|\uphi{}_h^\eta\|_{0,\Omega}$. This way $\ueta{}_h=\Pi_h^\mathbb{N}(\upsi{}_h+\uphi{}_h^\eta)+\nabla w_h$, and $h^{-1}\|\uphi{}_h^\eta\|_{0,\Omega}+\|\upsi{}_h\|_{1,\Omega}+\|w_h\|_{1,\Omega}\leqslant C\|\ueta{}_h\|_{\curl,\Omega}.$ Setting $\uphi{}_h=\upsi{}_h+\uphi{}_h^\eta\in \undertilde{\mathsf L}{}_{h0}^{+e}$, we have by Lemma \ref{lem:sdH1} that
$$
\ueta{}_h=\Pi_h^\mathbb{N}\uphi{}_h+\nabla w_h,
$$
and
$$
\|\uphi{}_h\|_{1,\Omega}+\|w_h\|_{1,\Omega}\leqslant C\|\ueta{}_h\|_{\curl,\Omega}.
$$
This completes the proof. 
\end{proof}


Note that $\mathring{H}_0(\dv,\Omega)=\curl H_0(\curl,\Omega)=\curl\undertilde{H}{}^1_{0}(\Omega)$, and the inf-sup condition holds below:
\begin{equation}\label{eq:infsuphhdiv}
\inf_{\utau\in\mathring{H}_0(\dv,\Omega)}\sup_{\uphi\in\undertilde{H}{}^1_0(\Omega)}\frac{(\curl\uphi,\utau)}{\|\uphi\|_{1,\Omega}\|\utau\|_{\dv,\Omega}}\geqslant C>0.
\end{equation}

Denote by $\mathring{\mathbb{RT}}_{h0}:=\{\utau\in \mathring{H}_0(\dv,\Omega):\utau|_K\in\mathbb{R}^3\}$; then $\mathring{\mathbb{RT}}_{h0}=\curl \mathbb{N}_{h0}$. A discretized analogue of \eqref{eq:infsuphhdiv} can be constructed. 
\begin{proposition}\label{cor:infsupp2p0}
\begin{equation}
\inf_{\utau{}_h\in \mathring{\mathbb{RT}}_{h0}\setminus\{\mathbf{0}\}}\sup_{\uphi{}_h\in\undertilde{\mathsf{L}}{}_{h0}^{+e}}\frac{(\curl\,\Pi_h^\mathbb{N}\uphi{}_h,\utau{}_h)}{\|\uphi{}_h\|_{1,\Omega}\|\utau{}_h\|_{\dv,\Omega}}\geqslant C>0.
\end{equation}
\end{proposition}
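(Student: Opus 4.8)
The plan is to test against the $\undertilde{H}{}^1$-component furnished by the discrete regular decomposition of Theorem~\ref{thm:dsdhcurl}. Since $\mathring{\mathbb{RT}}_{h0}=\curl\,\mathbb{N}_{h0}$ and every $\utau{}_h\in\mathring{\mathbb{RT}}_{h0}$ is discretely divergence free (so $\|\utau{}_h\|_{\dv,\Omega}=\|\utau{}_h\|_{0,\Omega}$), the whole statement reduces to one construction: given $\utau{}_h\in\mathring{\mathbb{RT}}_{h0}\setminus\{\mathbf{0}\}$, exhibit a test function $\uphi{}_h\in\undertilde{\mathsf L}{}_{h0}^{+e}$ with $\curl\,\Pi_h^\mathbb{N}\uphi{}_h=\utau{}_h$ and $\|\uphi{}_h\|_{1,\Omega}\leqslant C\|\utau{}_h\|_{\dv,\Omega}$. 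Granting this, $\uphi{}_h\neq\mathbf{0}$ (otherwise $\curl\,\Pi_h^\mathbb{N}\uphi{}_h=\mathbf 0\neq\utau{}_h$), and
\begin{equation*}
(\curl\,\Pi_h^\mathbb{N}\uphi{}_h,\utau{}_h)=(\utau{}_h,\utau{}_h)=\|\utau{}_h\|_{\dv,\Omega}^2 ,
\end{equation*}
so the quotient evaluated at $\uphi{}_h$ equals $\|\utau{}_h\|_{\dv,\Omega}/\|\uphi{}_h\|_{1,\Omega}\geqslant 1/C$, which bounds the supremum from below uniformly in $h$ and in $\utau{}_h$; taking the infimum over $\utau{}_h$ then finishes the proof.

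To build $\uphi{}_h$ I would proceed in two steps. First, produce an $h$-uniformly bounded discrete vector potential, i.e.\ an $\ueta{}_h\in\mathbb{N}_{h0}$ with $\curl\ueta{}_h=\utau{}_h$ and $\|\ueta{}_h\|_{\curl,\Omega}\leqslant C\|\utau{}_h\|_{\dv,\Omega}$: pick any $\ueta{}_h^0\in\mathbb{N}_{h0}$ with $\curl\ueta{}_h^0=\utau{}_h$ (possible since $\curl\,\mathbb{N}_{h0}=\mathring{\mathbb{RT}}_{h0}$), set $\ueta{}_h:=\ueta{}_h^0-\nabla q_h$ with $q_h\in\mathsf{L}_{h0}$ the solution of $(\nabla q_h,\nabla p_h)=(\ueta{}_h^0,\nabla p_h)$ for all $p_h\in\mathsf{L}_{h0}$; then $\curl\ueta{}_h=\utau{}_h$, $\ueta{}_h$ is $L^2$-orthogonal to $\nabla\mathsf{L}_{h0}$, and the discrete Poincar\'e--Friedrichs inequality for edge elements (classical; see e.g.\ \cite{Hiptmair.R;Xu.J2007} and the references therein) gives $\|\ueta{}_h\|_{0,\Omega}\leqslant C\|\curl\ueta{}_h\|_{0,\Omega}=C\|\utau{}_h\|_{\dv,\Omega}$, hence $\|\ueta{}_h\|_{\curl,\Omega}\leqslant C\|\utau{}_h\|_{\dv,\Omega}$. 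Second, apply Theorem~\ref{thm:dsdhcurl} to $\ueta{}_h$: there are $w_h\in\mathsf{L}_{h0}$ and $\uphi{}_h\in\undertilde{\mathsf L}{}_{h0}^{+e}$ with $\ueta{}_h=\nabla w_h+\Pi_h^\mathbb{N}\uphi{}_h$ and $\|w_h\|_{1,\Omega}+\|\uphi{}_h\|_{1,\Omega}\leqslant C\|\ueta{}_h\|_{\curl,\Omega}\leqslant C\|\utau{}_h\|_{\dv,\Omega}$. Since $\curl\nabla w_h=\mathbf{0}$, we get $\curl\,\Pi_h^\mathbb{N}\uphi{}_h=\curl(\ueta{}_h-\nabla w_h)=\curl\ueta{}_h=\utau{}_h$, which is exactly the test function required above.

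The only ingredient that is not bookkeeping is the $h$-uniform stable potential in the first step; everything afterwards is the displayed algebraic identity together with the already-proved stability of Theorem~\ref{thm:dsdhcurl}. Equivalently, that step says the surjection $\curl\colon\mathbb{N}_{h0}\to\mathring{\mathbb{RT}}_{h0}$ has a right inverse bounded uniformly in $h$ from $(\mathring{\mathbb{RT}}_{h0},\|\cdot\|_{\dv,\Omega})$ to $(\mathbb{N}_{h0},\|\cdot\|_{\curl,\Omega})$ --- the discrete analogue of the bounded right inverse underlying the continuous inf-sup \eqref{eq:infsuphhdiv}. A formally different but essentially equivalent route would be a Fortin-operator argument transferring \eqref{eq:infsuphhdiv} to the discrete level through a $\curl$-commuting quasi-interpolation onto $\mathbb{N}_{h0}$ followed by Theorem~\ref{thm:dsdhcurl}, but the commuting property needed there makes it no shorter than the direct computation above.
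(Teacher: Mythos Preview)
Your proof is correct and follows essentially the same route as the paper: pick a discrete vector potential $\ueta{}_h\in\mathbb{N}_{h0}$ with $\curl\ueta{}_h=\utau{}_h$, apply Theorem~\ref{thm:dsdhcurl} to extract $\uphi{}_h\in\undertilde{\mathsf L}{}_{h0}^{+e}$ with $\curl\Pi_h^{\mathbb{N}}\uphi{}_h=\utau{}_h$, and then evaluate the quotient. The paper's proof is terser and simply writes the chain
\[
\frac{(\curl\Pi_h^\mathbb{N}\uphi{}_h,\utau{}_h)}{\|\uphi{}_h\|_{1,\Omega}\|\utau{}_h\|_{\dv,\Omega}}\geqslant \frac{(\curl\Pi_h^\mathbb{N}\uphi{}_h,\curl\ueta{}_h)}{\|\uphi{}_h\|_{1,\Omega}\|\ueta{}_h\|_{\curl,\Omega}}\geqslant C,
\]
but the last inequality tacitly requires $\|\ueta{}_h\|_{\curl,\Omega}\leqslant C\|\curl\ueta{}_h\|_{0,\Omega}$, i.e.\ exactly the $h$-uniform bounded right inverse you construct explicitly via orthogonalisation against $\nabla\mathsf{L}_{h0}$ and the discrete Poincar\'e--Friedrichs inequality. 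So your version is the same argument with that implicit step made precise.
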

\begin{proof}
Given $\utau{}_h\in\mathring{\mathbb{RT}}_{h0}\setminus\{\mathbf{0}\}$, there exists a $\ueta{}_h\in\mathbb{N}_{h0}$, such that $\utau{}_h=\curl\ueta{}_h$. Then set $\uphi{}_h\in \undertilde{\mathsf{L}}{}_{h0}^{+e}$, such that $\ueta{}_h=\Pi_h^\mathbb{N}\uphi{}_h+\nabla w_h$ and $\|\ueta{}_h\|_{\curl,\Omega}\geqslant C(\|\uphi{}_h\|_{1,\Omega}+\|w_h\|_{1,h})$ with some $w_h\in \mathsf{L}_{h0}$, and we have 
$$
\frac{(\curl\Pi_h^\mathbb{N}\uphi{}_h,\utau{}_h)}{\|\uphi{}_h\|_{1,\Omega}\|\utau{}_h\|_{\dv,\Omega}}\geqslant \frac{(\curl\Pi_h^\mathbb{N}\uphi{}_h,\curl\ueta{}_h)}{\|\uphi{}_h\|_{1,\Omega}\|\ueta{}_h\|_{\curl,\Omega}}\geqslant C>0.
$$
This completes the proof.
\end{proof}

\subsection{A discretised analogue of the decomposition \eqref{eq:rdhdiv}}

Similarly to Theorem \ref{thm:dsdhcurl}, we can construct the result below.

\begin{theorem}\label{thm:dsdhdiv}
The decomposition below is stable:
\begin{equation}
\mathbb{RT}_{h0}=\curl\mathbb{N}_{h0}+\Pi_h^\mathbb{RT} \undertilde{\mathsf{L}}{}^{+f}_{h0}.
\end{equation}
Precisely, given $\utau{}_h\in \mathbb{RT}_{h0}$, there exists a $\uzeta{}_h\in \mathbb{N}_{h0}$ and a $\uphi{}_h\in\undertilde{\mathsf{L}}{}^{+f}_{h0}$, such that 
\begin{equation}
\|\uzeta{}_h\|_{\curl,\Omega}+\|\uphi{}_h\|_{1,\Omega}\leqslant C\|\utau{}_h\|_{\dv,\Omega}\quad\mbox{and}\ \ \utau{}_h=\curl\uzeta{}_h+\Pi_h^\mathbb{RT}\uphi{}_h.
\end{equation}
\end{theorem}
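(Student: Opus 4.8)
The plan is to follow verbatim the route of the proof of Theorem \ref{thm:dsdhcurl}, now invoking the second part of Lemma \ref{lem:HXlemma} in place of the first and the $\mathsf{L}^f$ case of Lemma \ref{lem:sdH1} in place of the $\mathsf{L}^e$ case. The inclusion ``$\supseteq$'' is the routine half: $\curl\mathbb{N}_{h0}\subset\mathbb{RT}_{h0}$ is the discrete exact sequence property, and $\Pi_h^\mathbb{RT}\undertilde{\mathsf L}{}_{h0}^{+f}\subset\mathbb{RT}_{h0}$ because every function of $\undertilde{\mathsf L}{}_{h0}^{+f}$ lies in $\undertilde H{}^1_0(\Omega)$, hence has vanishing trace on $\Gamma$, so all face moments of $\Pi_h^\mathbb{RT}$ on boundary faces vanish. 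The content of the theorem is therefore the stable splitting, and only the bound $\|\uzeta{}_h\|_{\curl,\Omega}+\|\uphi{}_h\|_{1,\Omega}\leqslant C\|\utau{}_h\|_{\dv,\Omega}$ has to be produced; the converse estimate $\|\utau{}_h\|_{\dv,\Omega}\lesssim\|\uzeta{}_h\|_{\curl,\Omega}+\|\uphi{}_h\|_{1,\Omega}$ is immediate from the boundedness of $\curl$ on $\mathbb{N}_{h0}$ and of $\Pi_h^\mathbb{RT}$ on $\undertilde{\mathsf L}{}_{h0}^{+f}$ together with an inverse estimate.

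For the splitting, given $\utau{}_h\in\mathbb{RT}_{h0}$, I would first apply Lemma \ref{lem:HXlemma}(2) to obtain $\upsi{}_h\in\undertilde{\mathsf L}{}_{h0}$, $\uzeta{}_h:=\ueta{}_h\in\mathbb{N}_{h0}$ and a ``small'' remainder $\tilde{\utau}{}_h\in\mathbb{RT}_{h0}$ with $\utau{}_h=\tilde{\utau}{}_h+\Pi_h^\mathbb{RT}\upsi{}_h+\curl\uzeta{}_h$ and $\|h^{-1}\tilde{\utau}{}_h\|_{0,\Omega}+\|\upsi{}_h\|_{1,\Omega}+\|\uzeta{}_h\|_{\curl,\Omega}\lesssim\|\utau{}_h\|_{\dv,\Omega}$. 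Next I would absorb $\tilde{\utau}{}_h$ into the face-bubble space: let $\uphi{}_h^\tau\in\undertilde{\mathsf L}{}_{h}^{f}$ be determined by $\int_f\uphi{}_h^\tau\cdot\undertilde{n}{}_f=\int_f\tilde{\utau}{}_h\cdot\undertilde{n}{}_f$ for every $f\in\mathcal{F}_h$; since $\tilde{\utau}{}_h$ has vanishing normal trace on $\Gamma$ the boundary coefficients are zero, so in fact $\uphi{}_h^\tau\in\undertilde{\mathsf L}{}_{h0}^{f}$, and by a cell-by-cell scaling (Piola) argument $\tilde{\utau}{}_h=\Pi_h^\mathbb{RT}\uphi{}_h^\tau$ with $\|\tilde{\utau}{}_h\|_{0,\Omega}\cequiv\|\uphi{}_h^\tau\|_{0,\Omega}$. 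Setting $\uphi{}_h:=\upsi{}_h+\uphi{}_h^\tau\in\undertilde{\mathsf L}{}_{h0}^{+f}$ then gives $\utau{}_h=\curl\uzeta{}_h+\Pi_h^\mathbb{RT}\uphi{}_h$, while Lemma \ref{lem:sdH1} (applied componentwise in the $\mathsf{L}^f$ case) yields $\|\uphi{}_h\|_{1,\Omega}\cequiv\|\upsi{}_h\|_{1,\Omega}+\big(\sum_{K\in\mathcal{G}_h}h_K^{-2}\|\uphi{}_h^\tau\|_{0,K}^2\big)^{1/2}\lesssim\|\utau{}_h\|_{\dv,\Omega}$; combined with the bound on $\|\uzeta{}_h\|_{\curl,\Omega}$ this closes the estimate.

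The step I expect to need the most care is the passage from $\tilde{\utau}{}_h$ to $\uphi{}_h^\tau$. Here one must check that on each tetrahedron $K$ the four face moments $\{\int_f(\cdot)\cdot\undertilde{n}{}_f\}$ are unisolvent on $\mathbb{F}(K)$, that they are ``diagonal'' against the face-bubble basis (each $b_{f'}\undertilde{n}{}_{f'}$ contributes only to the moment on $f'$, since $b_{f'}$ vanishes on the other faces), so that a piecewise-$\mathbb{F}(K)$ field sharing these moments with a member of $\undertilde{\mathsf L}{}_h^f$ is exactly its $\Pi_h^\mathbb{RT}$-interpolant, and that the scalings $\int_f b_f\sim h_K^{2}$, $\|b_f\undertilde{n}{}_f\|_{0,K}^2\sim h_K^{3}$ and $\|\cdot\|_{0,K}^2\sim h_K^{-1}\sum_f|\text{flux}_f|^2$ for $\mathbb{RT}$ fields combine to the claimed uniform norm equivalence $\|\tilde{\utau}{}_h\|_{0,K}\cequiv\|\uphi{}_h^\tau\|_{0,K}$. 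None of this is deep, but it is where the bookkeeping lives; everything else is a transcription of the argument for Theorem \ref{thm:dsdhcurl}.
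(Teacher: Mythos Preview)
Your proposal is correct and follows exactly the route the paper intends: the paper gives no explicit proof of Theorem \ref{thm:dsdhdiv} but simply states ``Similarly to Theorem \ref{thm:dsdhcurl}, we can construct the result below,'' and your argument is precisely that transcription---invoking Lemma \ref{lem:HXlemma}(2), absorbing the small remainder $\tilde{\utau}{}_h$ into a face-bubble $\uphi{}_h^\tau\in\undertilde{\mathsf L}{}_{h0}^f$ via face-moment matching with $\|\tilde{\utau}{}_h\|_{0,\Omega}\cequiv\|\uphi{}_h^\tau\|_{0,\Omega}$, and closing with the $\mathsf{L}^f$ case of Lemma \ref{lem:sdH1}. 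Your extra bookkeeping on the scaling argument is more explicit than the paper's one-line assertion in the $H(\curl)$ case, but the approach is identical.
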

Moreover, the inf-sup condition is well known
$$
\inf_{q\in L^2_0(\Omega)}\sup_{\uphi\in\undertilde{H}{}^1_0(\Omega)}\frac{(\dv\uphi,q)}{\|\uphi\|_{1,\Omega}\|q\|_{0,\Omega}}\geqslant C>0.
$$
Also, similar to Proposition \ref{cor:infsupp2p0}, a discretised inf-sup condition can be proved below. 
\begin{proposition}\label{cor:infsupp3p0}
\begin{equation}\label{eq:infsupp3p0}
\inf_{q_h\in \mathcal{L}^0_{h0}}\sup_{\uphi{}_h\in\mathsf{L}_{h0}^{+f}}\frac{(\dv\Pi_h^\mathbb{RT}\uphi{}_h,q_h)}{\|\uphi{}_h\|_{1,\Omega}\|q_h\|_{0,\Omega}}\geqslant C>0.
\end{equation}
\end{proposition}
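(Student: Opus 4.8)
The plan is to mimic exactly the argument used for Proposition \ref{cor:infsupp2p0}, replacing the pair $(\curl, \mathbb{N}_{h0})$ by $(\dv, \mathbb{RT}_{h0})$ and the decomposition of Theorem \ref{thm:dsdhcurl} by that of Theorem \ref{thm:dsdhdiv}. First I would observe the surjectivity statement $\mathcal{L}^0_{h0} = \dv\,\mathbb{RT}_{h0}$: indeed $\dv$ maps $\mathbb{RT}_{h0}$ onto the piecewise constants with zero mean, since $\dv$ of an $\mathbb{RT}$ field is piecewise constant, the mean-value constraint follows from the homogeneous normal boundary condition via the divergence theorem, and surjectivity is the standard commuting-diagram property of the lowest-order Raviart--Thomas space (this is precisely the discrete analogue of $\mathring H_0(\dv,\Omega)^\perp$-type argument, or can be quoted from the classical mixed finite element theory). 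Hence, given $q_h \in \mathcal{L}^0_{h0}\setminus\{\mathbf{0}\}$, there is $\utau{}_h \in \mathbb{RT}_{h0}$ with $\dv\utau{}_h = q_h$ and $\|\utau{}_h\|_{\dv,\Omega} \leqslant C\|q_h\|_{0,\Omega}$ (the bounded right inverse coming again from the standard theory, or from Lemma \ref{lem:divsur} combined with a bounded interpolation onto $\mathbb{RT}_{h0}$).

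Next I would apply Theorem \ref{thm:dsdhdiv} to this $\utau{}_h$: there exist $\uzeta{}_h \in \mathbb{N}_{h0}$ and $\uphi{}_h \in \undertilde{\mathsf{L}}{}^{+f}_{h0}$ with $\utau{}_h = \curl\uzeta{}_h + \Pi_h^\mathbb{RT}\uphi{}_h$ and $\|\uzeta{}_h\|_{\curl,\Omega} + \|\uphi{}_h\|_{1,\Omega} \leqslant C\|\utau{}_h\|_{\dv,\Omega}$. Taking divergence kills the $\curl$ term, so $\dv\,\Pi_h^\mathbb{RT}\uphi{}_h = \dv\utau{}_h = q_h$. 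Then I would estimate
\[
\sup_{\uphi{}_h' \in \mathsf{L}_{h0}^{+f}} \frac{(\dv\,\Pi_h^\mathbb{RT}\uphi{}_h', q_h)}{\|\uphi{}_h'\|_{1,\Omega}\|q_h\|_{0,\Omega}}
\geqslant \frac{(\dv\,\Pi_h^\mathbb{RT}\uphi{}_h, q_h)}{\|\uphi{}_h\|_{1,\Omega}\|q_h\|_{0,\Omega}}
= \frac{\|q_h\|_{0,\Omega}^2}{\|\uphi{}_h\|_{1,\Omega}\|q_h\|_{0,\Omega}}
\geqslant \frac{\|q_h\|_{0,\Omega}}{C\|\utau{}_h\|_{\dv,\Omega}}
\geqslant C' > 0,
\]
using $\|\uphi{}_h\|_{1,\Omega}\leqslant C\|\utau{}_h\|_{\dv,\Omega}\leqslant C\|q_h\|_{0,\Omega}$ in the last two steps. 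Taking the infimum over $q_h$ gives \eqref{eq:infsupp3p0}.

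The only point requiring care — the main obstacle, such as it is — is making precise the surjectivity and bounded right inverse $\mathcal{L}^0_{h0} = \dv\,\mathbb{RT}_{h0}$ with $h$-uniform stability constant; this is where I would either invoke the textbook mixed-method result (the $\dv$-$L^2$ inf-sup for Raviart--Thomas against piecewise constants, which is classical and $h$-uniform) or construct the right inverse explicitly from the continuous lifting of Lemma \ref{lem:divsur} followed by $\Pi_h^\mathbb{RT}$, checking $\dv\,\Pi_h^\mathbb{RT}\uphi = P_h^0\dv\uphi$ via the commuting property and that $\|\Pi_h^\mathbb{RT}\uphi\|_{\dv,\Omega}\lesssim \|\uphi\|_{1,\Omega}$. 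Everything else is a verbatim transcription of the proof of Proposition \ref{cor:infsupp2p0}.
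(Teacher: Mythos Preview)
Your proposal is correct and takes essentially the same approach as the paper, which does not write out a separate proof but simply states that the result is proved ``similar to Proposition \ref{cor:infsupp2p0}''. Your explicit identification of the $h$-uniform bounded right inverse $\dv:\mathbb{RT}_{h0}\to\mathcal{L}^0_{h0}$ as the one point requiring care is exactly the analogue of the (implicitly used) bounded surjection $\curl:\mathbb{N}_{h0}\to\mathring{\mathbb{RT}}_{h0}$ in the paper's proof of Proposition \ref{cor:infsupp2p0}, and the remainder is, as you say, a verbatim transcription.
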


\begin{remark}
In \eqref{eq:infsupp3p0}, by the commutative digram, the interpolation operator $\Pi_h^\mathbb{RT}$ can be removed. Actually, by noting the commutative diagram and the roles of the interpolation operators, we can view both the $\undertilde{L}{}_{h0}^{+e}-\mathbb{RT}_{h0}$ pair constructed in Proposition \ref{cor:infsupp2p0} and $\mathsf{L}_{h0}^{+f}-\mathcal{L}^0_{h0}$ pair constructed in Proposition \ref{cor:infsupp3p0} as 3D analogues of the 2D Bernardi-Raugel pair(\cite{Bernardi.C;Raugel.G1985}) for Stokes problem.
\end{remark}

\section{A mixed element method of the 3D bi-Laplacian equation}
\label{sec:mixbl}

In this section, we study the finite element method for the three dimensional bi-Laplacian equation
\begin{equation}\label{eq:modelbl}
(-\Delta)\left(-\alpha(x)\Delta u\right)=f,
\end{equation} 
with the homogeneous boundary data $u=0,\ \nabla u=\undertilde{0}$ and with $0<\alpha_s<\alpha(x)<\alpha_b$.  

\subsection{Order reduced formulation}

The primal variational formulation is, given $f\in H^{-2}(\Omega)$, to find $u\in H^2_0(\Omega)$, such that 
\begin{equation}
(\alpha\Delta u,\Delta v)=\langle f,v\rangle,\quad\forall\,v\in H^2_0(\Omega).
\end{equation}

Note that $H^{2}_0(\Omega)$ is configurated by $\{H^1_0(\Omega),\undertilde{H}{}^1_0(\Omega),\nabla \}$: $H^2_0(\Omega)=\{w\in H^1_0(\Omega):\nabla u\in\undertilde{H}{}^1_0(\Omega)\}$. We rewrite the model problem as, given $f_1\in H^{-1}(\Omega)$ and $\uf{}_2\in \undertilde{H}{}^{-1}(\Omega)$, to find $u\in H^2_0(\Omega)$, such that  
\begin{equation}\label{eq:primalbl}
(\alpha\Delta u,\Delta v)+(\curl\nabla u,\curl\nabla v)=\langle f_1,v\rangle+\langle\uf{}_2,\nabla v\rangle,\quad\forall\,v\in H^2_0(\Omega).
\end{equation}
Note that we add a mute term $(\curl\nabla u,\curl\nabla v)$ here without any difference. 

By Theorem \ref{thm:abs}, a mixed formulation of \eqref{eq:primalbl} is then to find $(u,\uphi,\uzeta)\in M:=H^1_0(\Omega)\times\undertilde{H}{}^1_0(\Omega)\times H_0(\curl,\Omega)$, such that, for any $(v,\upsi,\ueta)\in M$, 
\begin{equation}\label{eq:mixbl}
\left\{
\begin{array}{ccccl}
&&(-\nabla v,\uzeta)&=&\langle f_1,v\rangle,
\\
&(\alpha\dv\uphi,\dv\upsi)+(\curl\uphi,\curl\upsi)&(\upsi,\uzeta)+(\curl\upsi,\curl\uzeta)&=&\langle\uf{}_2,\upsi\rangle,
\\
(-\nabla u,\ueta)&(\uphi,\ueta)+(\curl\uphi,\curl\ueta)&&=&0.
\end{array}
\right.
\end{equation}

\begin{theorem}
Given $f_1\in H^{-1}(\Omega)$ and $\uf{}_2\in \undertilde{H}{}^{-1}(\Omega)$, the equation \eqref{eq:mixbl} admits a unique solution $(u,\uphi,\uzeta)\in M$, and 
$$
\|u\|_{1,\Omega}+\|\uphi\|_{1,\Omega}+\|\uzeta\|_{\curl,\Omega}\cequiv \|f_1\|_{-1,\Omega}+\|\uf{}_2\|_{-1,\Omega}.
$$ 
Moreover, $u$ solves the primal problem \eqref{eq:primalbl}.
\end{theorem}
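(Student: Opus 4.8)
The plan is to deduce this theorem directly from the abstract machinery of Theorem \ref{thm:abs} by matching the concrete data of \eqref{eq:mixbl} with the abstract system \eqref{eq:relaxtwice}. First I would identify the configurable triple: by Example~2.7(1), $\{R,S,B\}=\{H^1_0(\Omega),\undertilde{H}{}^1_0(\Omega),\nabla\}$ is configurable with ground space $H:=L^2(\Omega)^3$, and $V=H^2_0(\Omega)$ is configurated by it. The ground-level sum space is $Y=\nabla H^1_0(\Omega)+\undertilde{H}{}^1_0(\Omega)=H_0(\curl,\Omega)$ by the regular decomposition \eqref{eq:rdhcurl}, with $\|\cdot\|_Y$ equivalent to $\|\cdot\|_{\curl,\Omega}$; this is exactly the third slot $H_0(\curl,\Omega)$ appearing in $M$. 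So the three spaces match.

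Next I would check that the bilinear and linear forms in \eqref{eq:primalbl} fit the template of \eqref{eq:vpexp}. Writing $a(u,v)=(\alpha\Delta u,\Delta v)+(\curl\nabla u,\curl\nabla v)$ and using $\Delta u=\dv\nabla u$, we set $a_R\equiv 0$, $b\equiv 0$, and $a_S(\uphi,\upsi):=(\alpha\dv\uphi,\dv\upsi)+(\curl\uphi,\curl\upsi)$, which is a bounded symmetric positive semidefinite form on $\undertilde{H}{}^1_0(\Omega)$; the right-hand side is $\langle f_1,v\rangle+\langle\uf{}_2,\nabla v\rangle$, matching $f_R=f_1\in H^{-1}(\Omega)$ and $f_S=\uf{}_2\in\undertilde{H}{}^{-1}(\Omega)$. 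For the inner product $c$ on $Y=H_0(\curl,\Omega)$ I would take $c(\ueta,\uzeta):=(\ueta,\uzeta)+(\curl\ueta,\curl\uzeta)$, the standard $H(\curl)$ inner product, so that the three equations of \eqref{eq:relaxtwice} become precisely the three lines of \eqref{eq:mixbl} (the off-diagonal entries are $c(\nabla v,\uzeta)$, which expand to $(\nabla v,\uzeta)+(\curl\nabla v,\curl\uzeta)=(\nabla v,\uzeta)$ since $\curl\nabla\equiv 0$, consistent with the displayed system). The one genuine hypothesis to discharge is coercivity of $a(\cdot,\cdot)$ on $V=H^2_0(\Omega)$: since $\alpha\geqslant\alpha_s>0$, $a(u,u)\geqslant\alpha_s\|\Delta u\|_{0,\Omega}^2$, and on $H^2_0(\Omega)$ the seminorm $\|\Delta u\|_{0,\Omega}$ controls the full $H^2$ norm (and hence $\|u\|_{1,\Omega}$) by a standard Poincaré/elliptic-regularity argument; I would cite or briefly recall this.

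With all of Theorem \ref{thm:abs}'s hypotheses verified, its conclusion gives existence, uniqueness, and the norm equivalence
\[
\|u\|_{R}+\|\uphi\|_{S}+\|\uzeta\|_{Y}\cequiv\|f_R\|_{R'}+\|f_S\|_{S'},
\]
which, after substituting $R=H^1_0(\Omega)$, $S=\undertilde{H}{}^1_0(\Omega)$, $Y=H_0(\curl,\Omega)$ with $\|\uzeta\|_Y\cequiv\|\uzeta\|_{\curl,\Omega}$, is exactly the asserted estimate; and the "moreover" clause of Theorem \ref{thm:abs} says $u$ solves \eqref{eq:vpexp}, i.e. \eqref{eq:primalbl}, with $\uphi=\nabla u$ recovered from the relaxation.

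I expect the only real content beyond bookkeeping to be two points: first, verifying $Y=H_0(\curl,\Omega)$ with equivalent norm, which is handled by the stable decomposition \eqref{eq:rdhcurl} together with Lemma~2.9 and the closedness statements, so it is already available; and second, the coercivity of $a$ on $H^2_0(\Omega)$, which is the main (though standard) analytic obstacle and should be stated carefully since $a_R$ and $a_S$ are only semidefinite individually — coercivity holds on $V$ but not on $R$ or $S$ separately, which is precisely the situation anticipated in the sentence following \eqref{eq:vpexp}. Everything else is a direct translation of the abstract theorem to the present notation.
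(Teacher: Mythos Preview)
Your proposal is correct and matches the paper's approach: the paper derives \eqref{eq:mixbl} directly from the abstract Theorem~\ref{thm:abs} with exactly the identifications you describe, and then states this theorem without a separate proof. The only cosmetic discrepancy is a sign convention---the paper's $\uzeta$ corresponds to $-\zeta$ in \eqref{eq:relaxtwice}---which is a harmless change of variables and does not affect well-posedness or the norm equivalence.
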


\begin{theorem}
Let $\Omega$ be convex and $\alpha$ be smooth. Then,
\begin{enumerate}
\item if $\uf{}_2=\undertilde{0}$, then $\|u\|_{3,\Omega}+\|\uphi\|_{2,\Omega}\cequiv \|f_1\|_{-1,\Omega}$;
\item if $f_1\in L^2(\Omega)$ and $\uf{}_2=\undertilde{0}$, then $\|\uzeta\|_{1,\Omega}+\|\curl\uzeta\|_{1,\Omega}\cequiv\|f_1\|_{0,\Omega}$.
\end{enumerate}
\end{theorem}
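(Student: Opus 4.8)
The plan is to transfer elliptic regularity from the primal biharmonic problem and then propagate it through the mixed system \eqref{eq:mixbl}. First I would observe that when $\uf{}_2=\undertilde 0$, the function $u$ solves the primal problem \eqref{eq:primalbl}, which (since we added only the mute curl term acting trivially on $\nabla u\in\undertilde H{}^1_0$ — note $\curl\nabla u=\undertilde 0$) is exactly the weak form of $(-\Delta)(-\alpha\Delta u)=f_1$ with $u\in H^2_0(\Omega)$. On a convex polyhedron with $\alpha$ smooth, standard elliptic regularity for this fourth order problem gives $\|u\|_{3,\Omega}\lesssim\|f_1\|_{-1,\Omega}$: indeed, writing $p:=-\alpha\Delta u$, the pair $(u,p)$ solves a coupled Poisson system $-\Delta p=f_1$, $-\alpha\Delta u=p$ with $p\in H^1_0$ (at the level $f_1\in H^{-1}$, $p\in H^1_0$ and $u\in H^3$ by the shift theorem on convex domains, $H^{-1}\to H^1$ for each Laplace solve). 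Then $\uphi=\nabla u$ by the third equation of \eqref{eq:mixbl} tested appropriately (the equivalence statement in Theorem \ref{thm:abs} gives $Bu=\phi$, i.e. $\nabla u=\uphi$), so $\|\uphi\|_{2,\Omega}=\|\nabla u\|_{2,\Omega}\lesssim\|u\|_{3,\Omega}\lesssim\|f_1\|_{-1,\Omega}$. The matching lower bound $\|f_1\|_{-1,\Omega}\lesssim\|u\|_{3,\Omega}+\|\uphi\|_{2,\Omega}$ is immediate from the first equation of \eqref{eq:mixbl}, $\langle f_1,v\rangle=-(\nabla v,\uzeta)$, bounding $\uzeta$ below; alternatively from $f_1=(-\Delta)(-\alpha\Delta u)$ directly. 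This establishes part (1).

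For part (2), assume additionally $f_1\in L^2(\Omega)$. Now the primal solve upgrades: $p=-\alpha\Delta u\in H^2(\Omega)\cap H^1_0(\Omega)$ with $\|p\|_{2,\Omega}\lesssim\|f_1\|_{0,\Omega}$ (convex domain, $L^2$ right-hand side), hence $u\in H^4$ locally but — and this is the point — I only need the $\uzeta$-regularity. From the first equation of \eqref{eq:mixbl}, $(\nabla v,\uzeta)=-\langle f_1,v\rangle=-(f_1,v)$ for all $v\in H^1_0(\Omega)$, so $\uzeta$ has a distributional divergence: $\dv\uzeta=f_1\in L^2(\Omega)$, i.e. $\uzeta\in H_0(\dv,\Omega)$ with $\dv\uzeta=f_1$. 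Combined with $\uzeta\in H_0(\curl,\Omega)$ and the second equation, which after substituting $\uphi=\nabla u$ reads $(\upsi,\uzeta)+(\curl\upsi,\curl\uzeta)=(\alpha\Delta u,\dv\upsi)$... — here I would instead read off $\curl\uzeta$ directly. Testing the second equation of \eqref{eq:mixbl} with $\upsi\in\undertilde H{}^1_0$ and using $\uf{}_2=\undertilde 0$, $\uphi=\nabla u$, $\dv\nabla u=\Delta u$, $\curl\nabla u=\undertilde 0$: $(\upsi,\uzeta)+(\curl\upsi,\curl\uzeta)=(\alpha\Delta u,\dv\upsi)=-(\nabla(\alpha\Delta u),\upsi)=-(\nabla p\cdot(-1),\upsi)$, wait — $(\alpha\Delta u,\dv\upsi)=(-p,\dv\upsi)=(\nabla p,\upsi)$ after integration by parts (valid since $p\in H^1_0$, $\upsi\in\undertilde H{}^1_0$). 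So $\uzeta$ weakly satisfies $\uzeta+\curl\curl\uzeta=\nabla p$ in a suitable sense, with $\nabla p\in H^1(\Omega)$ since $p\in H^2$. I would then invoke Lemma \ref{lem:capreg}: on a convex $\Omega$, a field in $H_0(\curl)\cap H(\dv)$ lies in $\undertilde H^1$; applied after extracting that $\curl\uzeta\in H_0(\curl,\Omega)$ with controlled $\curl\curl\uzeta=\nabla p-\uzeta\in L^2$ and $\dv(\curl\uzeta)=0$, this yields $\curl\uzeta\in\undertilde H^1(\Omega)$ with $\|\curl\uzeta\|_{1,\Omega}\lesssim\|f_1\|_{0,\Omega}$; and since $\uzeta\in H_0(\curl,\Omega)\cap H_0(\dv,\Omega)$ with $\dv\uzeta=f_1$, $\curl\uzeta\in L^2$, Lemma \ref{lem:capreg} gives $\uzeta\in\undertilde H^1_0(\Omega)$ with $\|\uzeta\|_{1,\Omega}\lesssim\|\dv\uzeta\|_{0,\Omega}+\|\curl\uzeta\|_{0,\Omega}\lesssim\|f_1\|_{0,\Omega}$. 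The reverse bound $\|f_1\|_{0,\Omega}=\|\dv\uzeta\|_{0,\Omega}\leqslant\|\uzeta\|_{1,\Omega}$ is trivial, giving the equivalence.

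The main obstacle I anticipate is the rigorous identification of the equation satisfied by $\uzeta$ and justifying the $H^1$-regularity of $\curl\uzeta$: the second equation of \eqref{eq:mixbl} only tests against $\undertilde H{}^1_0(\Omega)$, not all of $H_0(\curl,\Omega)$, so one must first use the already-established $\uphi=\nabla u$ and the smoothness $p\in H^2$ to extend or reinterpret it, and then apply the convex-domain embedding of Lemma \ref{lem:capreg} to the auxiliary field $\curl\uzeta$ — which requires checking $\curl\uzeta\in H_0(\curl,\Omega)$ (not merely $H(\curl)$), a boundary-condition subtlety that follows from $\uzeta\in H_0(\curl)$ but deserves care. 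The smoothness of $\alpha$ enters precisely in passing $(\alpha\Delta u,\dv\upsi)$ to $(\nabla(\alpha\Delta u),\upsi)$ and in the elliptic shift estimate for $p$.
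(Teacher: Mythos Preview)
Your proposal is correct and, for part (1) and for the $\curl\uzeta\in\undertilde{H}^1$ step of part (2), follows exactly the paper's route: biharmonic regularity on the convex polyhedron gives $u\in H^3$ and hence $\uphi=\nabla u\in\undertilde{H}^2$; then the second equation of \eqref{eq:mixbl} yields $\curl\curl\uzeta\in\undertilde{L}^2$, so $\curl\uzeta\in H(\curl,\Omega)\cap H_0(\dv,\Omega)\subset\undertilde{H}^1(\Omega)$ by Lemma~\ref{lem:capreg}.

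Where you diverge from the paper is in establishing $\uzeta\in\undertilde{H}^1$. The paper splits $\uzeta=\nabla w_\zeta+\uzeta{}_1$ with $w_\zeta\in H^1_0(\Omega)$ and $\uzeta{}_1\in N_0(\curl,\Omega)$, gets $w_\zeta\in H^2$ from Poisson regularity via the first equation, and gets $\uzeta{}_1\in\undertilde{H}^1$ from the convex-domain embedding. Your route---reading off $\dv\uzeta=f_1\in L^2$ directly from the first equation and then invoking $H_0(\curl,\Omega)\cap H(\dv,\Omega)\subset\undertilde{H}^1(\Omega)$---is more direct and dispenses with the Helmholtz decomposition altogether; it buys simplicity at no cost.

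Two small corrections that do not affect the argument. First, the first equation only yields $\uzeta\in H(\dv,\Omega)$, not $H_0(\dv,\Omega)$: no information on $\uzeta\cdot\undertilde{n}$ is available, so you obtain $\uzeta\in\undertilde{H}^1(\Omega)$ rather than $\undertilde{H}{}^1_0(\Omega)$, which is all that is claimed. Second, your stated worry about needing $\curl\uzeta\in H_0(\curl,\Omega)$ is unnecessary: the relevant embedding from Lemma~\ref{lem:capreg} is $H(\curl,\Omega)\cap H_0(\dv,\Omega)\subset\undertilde{H}^1(\Omega)$, and $\curl\uzeta\in H_0(\dv,\Omega)$ is automatic from $\uzeta\in H_0(\curl,\Omega)$, so no tangential boundary condition on $\curl\uzeta$ is required.
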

\begin{proof}
The first item follows from the regularity theory of fourth order elliptic equation and the equivalence between \eqref{eq:mixbl} and \eqref{eq:primalbl}. Now we turn to the second item. Decompose $\uzeta=\nabla w_\zeta+\uzeta{}_1$ with $w_\zeta\in H^1_0(\Omega)$ and $\uzeta{}_1\in N_0(\curl,\Omega)$; the decomposition is unique. Then $(\nabla w_\zeta,\nabla v)=(f,v)$ for any $v\in H^1_0(\Omega)$, thus $w_\zeta\in H^1_0(\Omega)\cap H^2(\Omega)$ and $\|w_\zeta\|_{2,\Omega}\cequiv \|f\|_{0,\Omega}$. By Lemma \ref{lem:capreg}, $\uzeta{}_2\in N_0(\curl,\Omega)\subset \undertilde{H}{}^1(\Omega)$, and thus $\uzeta\in \undertilde{H}^1(\Omega)$ and $\|\uzeta\|_{1,\Omega}\cequiv \|f\|_{0,\Omega}$. For any $\upsi\in \undertilde{H}{}^1_0(\Omega)$, $(\curl\uzeta,\curl\upsi)=-(\uzeta,\upsi)-(\alpha\dv\uphi,\dv\upsi)=(\nabla\alpha\dv\uphi-\uzeta,\upsi)$, which leads to that $\curl\curl\uzeta=\nabla\alpha\dv\uphi-\uzeta\in\undertilde{L}{}^2(\Omega)$ and $\curl\uzeta\in H(\curl,\Omega)$. As $\curl\uzeta\in H_0(\dv,\Omega)$, we obtain $\curl\uzeta\in \undertilde{H}^1(\Omega)$, and $\|\curl\uzeta\|_{1,\Omega}\leqslant C\|f\|_{0,\Omega}$. This completes the proof. 
\end{proof}

\subsection{Discretization}

Define $M_h:=\mathsf{L}_{h0}\times\undertilde{\mathsf L}{}^{+e}_{h0}\times\mathbb{N}_{h0}$. A discretization scheme is to find $(u_h,\uphi{}_h,\uzeta{}_h)\in M_h$, such that, for $(v_h,\upsi{}_h,\ueta{}_h)\in M_h$,
\begin{equation}\label{eq:mixdisbl}
\left\{
\begin{array}{ccccl}

& &(-\nabla v_h,\uzeta{}_h) &=&\langle f_1,v_h\rangle 
\\

&(\alpha\dv\uphi{}_h,\dv\upsi{}_h)+(\curl\uphi{}_h,\curl\upsi{}_h)&(\Pi_h^\mathbb{N}\,\upsi{}_h,\uzeta{}_h)+(\curl \,\Pi_h^\mathbb{N}\,\upsi{}_h,\curl\uzeta{}_h) &=&\langle\uf{}_2,\upsi{}_h\rangle
\\
(-\nabla u_h,\ueta{}_h)&((\Pi_h^\mathbb{N}\,\uphi{}_h),\ueta{}_h)+(\curl \,\Pi_h^\mathbb{N}\,\uphi{}_h,\curl\ueta{}_h)&&=&0.
\end{array}
\right.
\end{equation}

The error estimate for the interpolation below is technically important.

\begin{theorem}
Given $f_1\in H^{-1}(\Omega)$ and $\uf{}_2\in\undertilde{H}^{-1}(\Omega)$, the problem \eqref{eq:mixdisbl} admits a unique solution $(u_h,\uphi{}_h,\uzeta{}_h)\in M_h$, and 
$$
\|u_h\|_{1,\Omega}+\|\uphi{}_h\|_{1,\Omega}+\|\uzeta{}_h\|_{\curl,\Omega}\cequiv \|f_1\|_{-1,h}+\|\uf{}_2\|_{-1,h},
$$
where
$$
\|f_1\|_{-1,h}:=\sup_{v_h\in \mathsf{L}{}_{h0}\setminus\{\mathbf{0}\}}\frac{(f,v_h)}{\|v_h\|_{1,\Omega}}, \ \ \mbox{and}\ \  \|\uf{}_2\|_{-1,h}:=\sup_{\upsi{}_h\in\undertilde{\mathsf L}{}_{h0}^{+e}\setminus\{\mathbf{0}\}}\frac{\langle \uf{}_2,\upsi{}_h\rangle }{\|\upsi{}_h\|_{1,\Omega}}.
$$

\end{theorem}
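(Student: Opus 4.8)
The plan is to verify the three hypotheses of Lemma \ref{lem:absconv} for the discrete triple $X_h = \mathsf{L}_{h0}\times\undertilde{\mathsf L}{}^{+e}_{h0}\times\mathbb{N}_{h0}$, with $B=\nabla$, $\Pi_h=\Pi_h^\mathbb{N}$, $R=H^1_0(\Omega)$, $S=\undertilde{H}{}^1_0(\Omega)$, $Y=H_0(\curl,\Omega)$, $c(\cdot,\cdot)=(\cdot,\cdot)_{\curl,\Omega}$, $a_S(\uphi,\upsi)=(\alpha\dv\uphi,\dv\upsi)+(\curl\uphi,\curl\upsi)$, $a_R\equiv0$, and $b(v,\upsi)=-(\nabla v,\upsi)$ (read off by comparing \eqref{eq:mixdisbl} with \eqref{eq:relaxtwicedis}). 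Once the three hypotheses hold, both well-posedness of \eqref{eq:mixdisbl} and the equivalence $\|u_h\|_{1,\Omega}+\|\uphi{}_h\|_{1,\Omega}+\|\uzeta{}_h\|_{\curl,\Omega}\cequiv\|f_1\|_{-1,h}+\|\uf{}_2\|_{-1,h}$ follow from the standard Brezzi theory (the right-hand side seminorms $\|\cdot\|_{-1,h}$ being exactly the dual norms of the discrete test spaces, which is why the continuous $H^{-1}$ norms must be replaced by their discrete analogues).

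First I would check hypothesis (1): the $L^2$-boundedness in $\|\cdot\|_{\curl,\Omega}$ of $\Pi_h^\mathbb{N}\nabla$ on $\mathsf{L}_{h0}$ and of $\Pi_h^\mathbb{N}$ on $\undertilde{\mathsf L}{}^{+e}_{h0}$. For gradients this is immediate because $\Pi_h^\mathbb{N}\nabla v_h = \nabla v_h$ for $v_h\in\mathsf{L}_{h0}$ (commuting diagram, $\nabla v_h$ already lies in $\mathbb{N}_{h0}$), so $\|\Pi_h^\mathbb{N}\nabla v_h\|_{\curl,\Omega}=\|\nabla v_h\|_{0,\Omega}\leqslant\|v_h\|_{1,\Omega}$. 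For $\upsi{}_h\in\undertilde{\mathsf L}{}^{+e}_{h0}$ I would split $\upsi{}_h=\upsi{}_h^L+\upsi{}_h^e$ with $\upsi{}_h^L\in\undertilde{\mathsf L}{}_{h0}$ and $\upsi{}_h^e\in\undertilde{\mathsf L}{}_{h0}^e$, bound $\Pi_h^\mathbb{N}$ on each piece cellwise using a scaling argument together with the inverse inequality and Lemma \ref{lem:intcurlpolyn} (noting $\upsi{}_h$ is piecewise polynomial), and then invoke Lemma \ref{lem:sdH1} to control $\|\upsi{}_h^L\|_{1,\Omega}+(\sum_K h_K^{-2}\|\upsi{}_h^e\|_{0,K}^2)^{1/2}$ by $\|\upsi{}_h\|_{1,\Omega}$. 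Hypothesis (2), the coercivity on the discrete kernel $Z_h$, is where the mute curl term pays off: on $Z_h$ one has $\Pi_h^\mathbb{N}(\nabla u_h-\uphi{}_h)\perp_{\curl}\mathbb{N}_{h0}$, and since $\nabla u_h\in\mathbb{N}_{h0}$ one deduces $\Pi_h^\mathbb{N}\uphi{}_h=\nabla u_h$, whence $\curl\Pi_h^\mathbb{N}\uphi{}_h=0$; then $a_S(\uphi{}_h,\uphi{}_h)=(\alpha\dv\uphi{}_h,\dv\uphi{}_h)+(\curl\uphi{}_h,\curl\uphi{}_h)$, and combining $\|\dv\uphi{}_h\|_{0}^2+\|\curl\uphi{}_h\|_{0}^2\gtrsim\|\uphi{}_h\|_{1,\Omega}^2$ (valid on $\undertilde{H}{}^1_0(\Omega)$) with the Poincar\'e-type bound $\|u_h\|_{1,\Omega}\lesssim\|\nabla u_h\|_0=\|\Pi_h^\mathbb{N}\uphi{}_h\|_0\lesssim\|\uphi{}_h\|_{1,\Omega}$ gives $\|u_h\|_{1,\Omega}^2+\|\uphi{}_h\|_{1,\Omega}^2\lesssim a_S(\uphi{}_h,\uphi{}_h)$.

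The remaining, and I expect hardest, step is hypothesis (3): the discrete stable decomposition $\mathbb{N}_{h0}=\nabla\mathsf{L}_{h0}+\Pi_h^\mathbb{N}\undertilde{\mathsf L}{}^{+e}_{h0}$ with uniformly bounded decomposition constants. But this is precisely the content of Theorem \ref{thm:dsdhcurl}, so I would simply cite it: given $\uzeta{}_h\in\mathbb{N}_{h0}$, apply Theorem \ref{thm:dsdhcurl} to obtain $w_h\in\mathsf{L}_{h0}$ and $\uphi{}_h\in\undertilde{\mathsf L}{}^{+e}_{h0}$ with $\uzeta{}_h=\nabla w_h+\Pi_h^\mathbb{N}\uphi{}_h$ and $\|w_h\|_{1,\Omega}+\|\uphi{}_h\|_{1,\Omega}\lesssim\|\uzeta{}_h\|_{\curl,\Omega}$, which is the supremum bound in hypothesis (3). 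With all three hypotheses established, Brezzi's theorem (via Lemma \ref{lem:absconv}, or directly) yields existence, uniqueness, and the stated norm equivalence; the discrete dual norms $\|f_1\|_{-1,h}$, $\|\uf{}_2\|_{-1,h}$ appear because the inf-sup/coercivity estimates only test against functions in $\mathsf{L}_{h0}$ and $\undertilde{\mathsf L}{}^{+e}_{h0}$ respectively. I would close by remarking that, should one wish the bound in terms of the genuine $H^{-1}$ norms, it suffices that the right-hand side functionals factor through $L^2$, but in general the discrete norms are the natural and sharp ones here.
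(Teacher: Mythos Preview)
Your proposal is correct and follows the same line as the paper's proof: verify the three hypotheses of Lemma~\ref{lem:absconv}, with the inf-sup condition supplied by Theorem~\ref{thm:dsdhcurl}. Two minor remarks: in this problem $b\equiv 0$ (not $-(\nabla v,\upsi)$, which is harmless since you never use $b$), and for hypothesis~(1) the paper applies Lemma~\ref{lem:intcurlpolyn} directly to all of $\undertilde{\mathsf L}{}^{+e}_{h0}\subset(P_2(\mathcal{G}_h))^3\cap\undertilde{H}{}^1(\Omega)$ together with an inverse inequality, avoiding the split via Lemma~\ref{lem:sdH1}.
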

\begin{proof}
We only have to verify the hypotheses of Lemma \ref{lem:absconv}. By Lemma \ref{lem:intcurlpolyn}, $\|\Pi_h^\mathbb{N}\upsi{}_h\|_{0,\Omega}\leqslant C\|\upsi{}_h\|_{0,\Omega}$ and thus $\|\Pi_h^\mathbb{N}\upsi{}_h\|_{\curl,\Omega}\leqslant C\|\upsi{}_h\|_{1,\Omega}$ for $\upsi{}_h\in\undertilde{\mathsf L}{}_{h0}^{+e}$. This verifies the boundedness of all the bilinear forms involved. Moreover, $(\alpha\dv\uphi{}_h,\dv\uphi{}_h)+(\curl\uphi{}_h,\curl\uphi{}_h)\geqslant C(\|\uphi{}_h\|_{1,\Omega}^2+\|u_h\|_{1,\Omega}^2)$ for $(u_h,\uphi{}_h)\in Z_h:=\{(u_h,\uphi{}_h)\in \mathsf{L}_{h0}\times\undertilde{\mathsf L}{}_{h0}^{+e}:((\Pi_h^\mathbb{N}\,\uphi{}_h),\ueta{}_h)-(\nabla u_h,\ueta{}_h)+(\curl \,\Pi_h^\mathbb{N}\,\uphi{}_h,\curl\ueta{}_h)=0,\forall\,\ueta{}_h\in\mathbb{N}_{h0}\}$. The inf-sup condition follows from Theorem \ref{thm:dsdhcurl}. The proof is completed. 
\end{proof}

\begin{lemma}
Let $(u,\uphi,\uzeta)$ and $(u_h,\uphi{}_h,\uzeta{}_h)$ be the solutions of \eqref{eq:mixbl} and \eqref{eq:mixdisbl}, respectively. Provided sufficient smoothness of $(u,\uphi,\uzeta)$, it holds that 
$$
\|u-u_h\|_{1,\Omega}+\|\uphi-\uphi{}_h\|_{1,\Omega}+\|\uzeta-\uzeta{}_h\|_{\curl,\Omega}\leqslant Ch(\|u\|_{2,\Omega}+\|\uphi\|_{2,\Omega}+\|\uzeta\|_{1,\Omega}+\|\curl\uzeta\|_{1,\Omega}).
$$
\end{lemma}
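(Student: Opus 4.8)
The plan is to combine the abstract Strang-type estimate of Lemma~\ref{lem:absconv} with concrete interpolation error bounds for the finite element spaces involved. First I would invoke \eqref{eq:absconv}: since the discretized problem \eqref{eq:mixdisbl} is exactly \eqref{eq:relaxtwicedis} with $R=H^1_0(\Omega)$, $S=\undertilde{H}{}^1_0(\Omega)$, $Y=H_0(\curl,\Omega)$, $B=\nabla$, $\Pi_h=\Pi_h^\mathbb{N}$, and with the bilinear forms as in \eqref{eq:mixbl}, the error is bounded by an approximation term $\inf_{(v_h,\upsi_h,\ueta_h)\in M_h}\|(u-v_h,\uphi-\upsi_h,\uzeta-\ueta_h)\|$ plus three consistency terms measuring the mismatch $(\,\cdot\,-\Pi_h^\mathbb{N}\,\cdot\,)$ tested against $\uzeta$.

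For the approximation term I would pick $v_h$ to be the Lagrange interpolant of $u$ in $\mathsf{L}_{h0}$ (giving $O(h)\|u\|_{2,\Omega}$ in $H^1$), $\ueta_h=\Pi_h^\mathbb{N}\uzeta$ the N\'ed\'elec interpolant (giving $O(h)(\|\uzeta\|_{1,\Omega}+\|\curl\uzeta\|_{1,\Omega})$ in the $\curl$-norm, using that $\curl\uzeta\in\undertilde{H}^1(\Omega)$ and the commuting diagram $\curl\Pi_h^\mathbb{N}=\Pi_h^\mathbb{RT}\curl$), and $\upsi_h$ a quasi-interpolant of $\uphi$ into $\undertilde{\mathsf L}{}_{h0}^{+e}\supset\undertilde{\mathsf L}{}_{h0}$, which yields $O(h)\|\uphi\|_{2,\Omega}$. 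For the three consistency terms, the key observation is that $\Pi_h^\mathbb{N}$ acts as the identity on $\nabla\mathsf{L}_{h0}$ and on $\undertilde{\mathsf L}{}_{h0}^e$ by construction, so on a test function $\upsi_h=\upsi_h^0+\upsi_h^e\in\undertilde{\mathsf L}{}_{h0}^{+e}$ the defect $\upsi_h-\Pi_h^\mathbb{N}\upsi_h$ is supported only on the Lagrange part $\upsi_h^0$, where Lemma~\ref{lem:intcurlpolyn} gives $\|\upsi_h^0-\Pi_h^\mathbb{N}\upsi_h^0\|_{0,\Omega}\leqslant Ch|\upsi_h^0|_{1,\Omega}\leqslant Ch\|\upsi_h\|_{1,\Omega}$; pairing this against $\uzeta$ (or $\curl\uzeta$, handled via the same diagram) and against $Bu-\uphi=\nabla u-\uphi$ contributes only $O(h)$ times the norms of $\uzeta$, $\curl\uzeta$, $u$, $\uphi$, so all consistency terms are absorbed into the stated bound. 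The same bound on $\Pi_h^\mathbb{N}$ restricted to the Lagrange subspace also handles $\sup_{v_h}c(\nabla v_h-\Pi_h^\mathbb{N}\nabla v_h,\uzeta)/\|v_h\|_{1,\Omega}$, which in fact vanishes since $\nabla v_h\in\nabla\mathsf{L}_{h0}$ is reproduced exactly by $\Pi_h^\mathbb{N}$.

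I expect the main obstacle to be the bookkeeping around the split space $\undertilde{\mathsf L}{}_{h0}^{+e}=\undertilde{\mathsf L}{}_{h0}+\undertilde{\mathsf L}{}_{h0}^e$: one must argue that a stable decomposition of the test/trial functions into a Lagrange part and a bubble part exists with control $|\upsi_h^0|_{1,\Omega}+(\sum_K h_K^{-2}\|\upsi_h^e\|_{0,K}^2)^{1/2}\lesssim\|\upsi_h\|_{1,\Omega}$ — this is precisely Lemma~\ref{lem:sdH1} — and then that $\Pi_h^\mathbb{N}$ is uniformly bounded on $\undertilde{\mathsf L}{}_{h0}^{+e}$ in the $\curl$-norm, which follows from Lemma~\ref{lem:intcurlpolyn} applied cellwise (the bubble part being a fixed-degree polynomial with inverse estimates). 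A secondary technical point is ensuring $\curl\uzeta\in\undertilde{H}^1(\Omega)$ so that the N\'ed\'elec interpolation estimate is available; but this is exactly the regularity $\|\curl\uzeta\|_{1,\Omega}\cequiv\|f\|_{0,\Omega}$ proved in the preceding theorem, and it is why that quantity appears on the right-hand side. With these pieces in place the estimate follows by collecting terms.
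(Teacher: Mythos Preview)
Your overall strategy matches the paper's: invoke the abstract Strang estimate of Lemma~\ref{lem:absconv} and bound the approximation and consistency terms separately. The approximation bounds and the observation that the $\nabla v_h$ consistency term vanishes (since $\nabla\mathsf{L}_{h0}\subset\mathbb{N}_{h0}$) are correct.

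There is, however, a genuine gap in your treatment of the consistency term
\[
c(\upsi_h-\Pi_h^\mathbb{N}\upsi_h,\uzeta)=(\upsi_h-\Pi_h^\mathbb{N}\upsi_h,\uzeta)+(\curl(\upsi_h-\Pi_h^\mathbb{N}\upsi_h),\curl\uzeta).
\]
For the $L^2$ part, Lemma~\ref{lem:intcurlpolyn} indeed gives $\|\upsi_h-\Pi_h^\mathbb{N}\upsi_h\|_{0,\Omega}\leqslant Ch|\upsi_h|_{1,\Omega}$, and pairing with $\uzeta$ yields $O(h)$. But for the curl part your phrase ``handled via the same diagram'' does not deliver the bound: the commuting identity $\curl\Pi_h^\mathbb{N}=\Pi_h^\mathbb{RT}\curl$ only rewrites the term as $(\curl\upsi_h-\Pi_h^\mathbb{RT}\curl\upsi_h,\curl\uzeta)$, and the natural estimate $\|\curl\upsi_h-\Pi_h^\mathbb{RT}\curl\upsi_h\|_0\lesssim h\|\curl\upsi_h\|_1$ costs an extra derivative on $\upsi_h$ which the inverse inequality recovers only at order $O(1)$, not $O(h)$. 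The paper instead integrates by parts, moving the curl onto $\uzeta$:
\[
(\curl(\upsi_h-\Pi_h^\mathbb{N}\upsi_h),\curl\uzeta)=(\upsi_h-\Pi_h^\mathbb{N}\upsi_h,\curl\curl\uzeta),
\]
which is legitimate precisely because $\curl\uzeta\in\undertilde{H}^1(\Omega)$, so that $\curl\curl\uzeta\in\undertilde{L}^2(\Omega)$. Then the $L^2$ bound from Lemma~\ref{lem:intcurlpolyn} gives $Ch|\upsi_h|_{1,\Omega}\|\curl\uzeta\|_{1,\Omega}$. This integration by parts is the missing step, and it is exactly why $\|\curl\uzeta\|_{1,\Omega}$ (rather than merely $\|\curl\uzeta\|_{0,\Omega}$) must appear on the right-hand side.

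A smaller correction: $\Pi_h^\mathbb{N}$ is \emph{not} the identity on $\undertilde{\mathsf L}{}_{h0}^e$; the edge-bubble functions $b_e\undertilde{t}{}_e$ are piecewise quadratic and do not lie in the lowest-order N\'ed\'elec space. This does not damage the argument, though, since Lemma~\ref{lem:intcurlpolyn} applies directly to any $\upsi_h\in\undertilde{\mathsf L}{}_{h0}^{+e}\subset(P_2(\mathcal{G}_h))^3\cap\undertilde{H}^1(\Omega)$ without any need to split into Lagrange and bubble parts; this is how the paper proceeds.
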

\begin{proof}
By  Lemma \ref{lem:absconv},  
\begin{multline*}
\|u-u_h\|_{1,\Omega}+\|\uphi-\uphi{}_h\|_{1,\Omega}+\|\uzeta-\uzeta{}_h\|_{\curl,\Omega}
\\
\leqslant C\Big[\inf_{(v_h,\upsi{}_h,\ueta{}_h)\in X_h}(\|u-v_h\|_{1,\Omega}+\|\uphi-\upsi{}_h\|_{1,\Omega}+\|\uzeta-\ueta{}_h\|_{\curl,\Omega}) 
\\
+\sup_{\upsi{}_h\in \undertilde{\mathsf{L}}{}^{+e}_{h0}}\frac{(\upsi-\Pi_h^\mathbb{N}\upsi,\uzeta)+(\curl(\upsi-\Pi_h^\mathbb{N}\upsi),\curl\uzeta)}{\|\upsi{}_h\|_{1,\Omega}}
\\
+\sup_{\ueta{}_h\in\mathbb{N}_{h0}}\frac{(\uphi-\Pi_h^\mathbb{N}\uphi,\ueta{}_h)+(\curl(\uphi-\Pi_h^\mathbb{N}\uphi),\curl\ueta{}_h)}{\|\ueta{}_h\|_{\curl,\Omega}}\Big].
\end{multline*}
The approximation error is controlled in a standard way. By Lemma \ref{lem:intcurlpolyn}, 
\begin{multline*}
(\upsi-\Pi_h^\mathbb{N}\upsi,\uzeta)+(\curl(\upsi-\Pi_h^\mathbb{N}\upsi),\curl\uzeta)=((\upsi-\Pi_h^\mathbb{N}\upsi),\uzeta+\curl\curl\uzeta)
\\
\leqslant \|\upsi-\Pi_h^\mathbb{N}\upsi\|_{0,\Omega}\|\uzeta+\curl\curl\uzeta\|_{0,\Omega}\leqslant Ch|\upsi|_{1,\Omega}(\|\uzeta\|_{1,\Omega}+\|\curl\uzeta\|_{1,\Omega}).
\end{multline*}
By standard error estimate, 
\begin{multline*}
(\uphi-\Pi_h^\mathbb{N}\uphi,\ueta{}_h)+(\curl(\uphi-\Pi_h^\mathbb{N}\uphi),\curl\ueta{}_h)\leqslant \|\uphi-\Pi_h^\mathbb{N}\uphi\|_{\curl,\Omega}\|\ueta{}_h\|_{\curl,\Omega}
\\
\leqslant Ch(\|\uphi\|_{1,\Omega}+\|\curl\uphi\|_{1,\Omega})\|\ueta{}_h\|_{\curl,\Omega}=Ch\|\uphi\|_{1,\Omega}\|\ueta{}_h\|_{\curl,\Omega}.
\end{multline*}
Summing all above completes the proof. 
\end{proof}

\section{A mixed element method for fourth order curl problem}
\label{sec:mixfoc}

In this section, we study the fourth order curl equaltion:
\begin{equation}\label{eq:bvpB}
\left\{
\begin{array}{rl}
\curl^2\mathcal{A}(x)\curl^2\uu+\uu=\uf,&\mbox{in}\ \Omega;
\\
\uu\times\mathbf{n}=\undertilde{0},\ (\curl \uu)\times \mathbf{n}=\undertilde{0} & \mbox{on}\ \partial\Omega,
\end{array}
\right.
\end{equation}
where $\mathcal{A}(x)$ is a symmetric definite bounded matrix fields on $\Omega$.

\subsection{Order reduced formulation}

Its variational problem is (c.f. \cite{Zhang.S2016foc}): given $\uf\in(\undertilde{H}{}^1_0(\curl,\Omega))'$, to find $\uu\in \undertilde{H}{}^1_0(\curl,\Omega)$, such that 
\begin{equation}
(\mathcal{A}\curl^2\uu,\curl^2 \uv)+(\uu,\uv)=\langle\uf,\uv\rangle,\ \ \forall\,\uv\in \undertilde{H}{}^1_0(\curl,\Omega).
\end{equation}
Note that $\undertilde{H}{}^1_0(\curl,\Omega)$ is configurated by $\{H_0(\curl,\Omega),\undertilde{H}{}^1_0(\Omega),\curl\}$; namely, $\undertilde{H}{}^1_0(\curl,\Omega)=\{\uv\in H_0(\curl,\Omega):\curl\uv\in \undertilde{H}{}^1_0(\Omega)\}$. We rewrite the model problem as: given $\uf{}_1\in H_0(\curl,\Omega)'$ and $\uf{}_2\in\undertilde{H}{}^1_0(\Omega)'$, to find $\uu\in \undertilde{H}{}^1_0(\curl,\Omega)$, such that 
\begin{equation}\label{eq:primalfoc}
(\mathcal{A}\curl^2\uu,\curl^2 \uv)+(\dv\,\curl\uu,\dv\,\curl\uv)+(\uu,\uv)
=\langle\uf{}_1,\uv\rangle +\langle\uf{}_2,\curl\uv\rangle,\ \ \forall\,\uv\in \undertilde{H}{}^1_0(\curl,\Omega).
\end{equation}
Again, a mute term $(\dv\,\curl\uu,\dv\,\curl\uv)$ is added here. 

By Theorem \ref{thm:abs}, a mixed formulation of \eqref{eq:primalfoc} is to find $(\uu,\uphi,\usigma)\in N:=H_0(\curl,\Omega)\times \undertilde{H}{}^1_0(\Omega)\times H_0(\dv,\Omega)$, such that, for any $(\uv,\upsi,\utau)\in N$,
\begin{equation}\label{eq:mixfoc}
\left\{
\begin{array}{ccccl}
(\uu,\uv)&&-(\curl\uv,\usigma)&=&\langle\uf{}_1,\uv\rangle,
\\
&(\mathcal{A}\curl \uphi,\curl\upsi)+(\dv\uphi,\dv\upsi)&(\upsi,\usigma)+(\dv\upsi,\dv\usigma)&=&\langle\uf{}_2,\upsi\rangle,
\\
-(\curl\uu,\utau)&(\uphi,\utau)+(\dv\uphi,\dv\utau)&&=&0.
\end{array}
\right.
\end{equation}
\begin{theorem}
Given $\uf{}_1\in (H_0(\curl,\Omega))'$ and $\uf{}_2\in \undertilde{H}{}^{-1}(\Omega)$, the equation \eqref{eq:mixfoc} admits a unique solution $(\uu,\uphi,\usigma)\in N$, and 
$$
\|\uu\|_{0,\Omega}+\|\curl\uu\|_{1,\Omega}+\|\uphi\|_{1,\Omega}+\|\usigma\|_{\dv,\Omega}\cequiv \|f{}_1\|_{(H_0(\curl,\Omega))'}+\|\uf{}_2\|_{-1,\Omega}.
$$ 
Moreover, $\uu$ solves the primal problem \eqref{eq:primalfoc}.
\end{theorem}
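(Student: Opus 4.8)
The plan is to read \eqref{eq:mixfoc} as the concrete instance of the abstract system \eqref{eq:relaxtwice} attached to the configurable triple $\{H_0(\curl,\Omega),\undertilde{H}{}^1_0(\Omega),\curl\}$, and then invoke Theorem \ref{thm:abs}. By Example 2.5(2), this triple is configurable with ground space $\undertilde{L}{}^2(\Omega)$, and $V:=\undertilde{H}{}^1_0(\curl,\Omega)$ is configurated by it, with $\|\uv\|_V\cequiv\|\uv\|_{\curl,\Omega}+\|\curl\uv\|_{1,\Omega}$; moreover the associated space $Y=\curl H_0(\curl,\Omega)+\undertilde{H}{}^1_0(\Omega)$ is exactly $H_0(\dv,\Omega)$, and the stable decomposition \eqref{eq:rdhdiv}, combined with $\dv\curl\equiv 0$ and the triangle inequality, gives $\|\cdot\|_Y\cequiv\|\cdot\|_{\dv,\Omega}$. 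I would take $c(\utau,\usigma):=(\utau,\usigma)+(\dv\utau,\dv\usigma)$ as the inner product on $Y$, and split $a(\cdot,\cdot)$ in the form \eqref{eq:vpexp} by setting $a_R(\uu,\uv)=(\uu,\uv)$, $b\equiv 0$, $a_S(\upsi,\utau)=(\mathcal{A}\curl\upsi,\curl\utau)+(\dv\upsi,\dv\utau)$ on $\undertilde{H}{}^1_0(\Omega)$, $f_R=\uf{}_1$, $f_S=\uf{}_2$. Because $\dv\curl\equiv 0$, the term $c(B\uv,\zeta)$ degenerates to $(\curl\uv,\zeta)$, so \eqref{eq:relaxtwice} coincides with \eqref{eq:mixfoc} after the relabelling $\usigma=-\zeta$ (equivalently, flipping the sign of the test function in the third equation).

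With this dictionary fixed, I would verify the hypotheses of Theorem \ref{thm:abs}. Boundedness of $a_R$, $a_S$, $b$, $c$ and of the right-hand side functionals is immediate. The only substantive point is the coercivity of $a(\cdot,\cdot)$ on $V$: for $\uv\in V$ one has $a(\uv,\uv)=(\mathcal{A}\curl^2\uv,\curl^2\uv)+\|\dv\curl\uv\|_{0,\Omega}^2+\|\uv\|_{0,\Omega}^2$, and since $\curl\uv\in\undertilde{H}{}^1_0(\Omega)$, the identity $|\uw|_{1,\Omega}^2=\|\curl\uw\|_{0,\Omega}^2+\|\dv\uw\|_{0,\Omega}^2$ for $\uw\in\undertilde{H}{}^1_0(\Omega)$ together with the Poincar\'e inequality on $\undertilde{H}{}^1_0(\Omega)$ gives $\|\curl\uv\|_{1,\Omega}^2\lesssim\|\curl^2\uv\|_{0,\Omega}^2+\|\dv\curl\uv\|_{0,\Omega}^2\lesssim a(\uv,\uv)$; adding $\|\uv\|_{0,\Omega}^2\leqslant a(\uv,\uv)$ and $\|\curl\uv\|_{0,\Omega}\leqslant\|\curl\uv\|_{1,\Omega}$ yields $\|\uv\|_V^2\lesssim a(\uv,\uv)$ (this coercivity is also available from \cite{Zhang.S2016foc}). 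Theorem \ref{thm:abs} then delivers the unique solvability of \eqref{eq:mixfoc}, the bound $\|\uu\|_{\curl,\Omega}+\|\uphi\|_{1,\Omega}+\|\usigma\|_{\dv,\Omega}\cequiv\|\uf{}_1\|_{(H_0(\curl,\Omega))'}+\|\uf{}_2\|_{-1,\Omega}$, and the fact that $\uu$ solves \eqref{eq:primalfoc}.

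Finally I would upgrade the a priori bound to the stated form. The third equation of \eqref{eq:mixfoc}, together with $c$ being an inner product on $Y$ (the mechanism of Lemma \ref{lem:fund} that underlies the passage from \eqref{eq:relaxonce} to \eqref{eq:relaxtwice}), forces $\curl\uu=\uphi$; hence $\|\curl\uu\|_{1,\Omega}=\|\uphi\|_{1,\Omega}$ and $\|\uu\|_{\curl,\Omega}+\|\curl\uu\|_{1,\Omega}\cequiv\|\uu\|_{0,\Omega}+\|\curl\uu\|_{1,\Omega}$, which combined with $\|\cdot\|_Y\cequiv\|\cdot\|_{\dv,\Omega}$ produces the displayed equivalence. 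I expect the coercivity estimate — controlling $\|\curl\uv\|_{1,\Omega}$ purely through the two $\curl$-type terms of $a$ via the $\curl$--$\dv$ splitting of the $\undertilde{H}{}^1_0$-seminorm — to be the only step requiring genuine work; everything else is a routine specialization of the already-proved Theorem \ref{thm:abs} and of the stable decomposition \eqref{eq:rdhdiv}.
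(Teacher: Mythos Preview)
Your proposal is correct and follows exactly the route the paper intends: the theorem is stated in the paper without an explicit proof precisely because it is the specialization of Theorem~\ref{thm:abs} to the configurable triple $\{H_0(\curl,\Omega),\undertilde{H}{}^1_0(\Omega),\curl\}$ with $Y=H_0(\dv,\Omega)$, and your identification of $a_R$, $a_S$, $b$, $c$, the sign convention $\usigma=-\zeta$, and the upgrade $\curl\uu=\uphi\Rightarrow\|\curl\uu\|_{1,\Omega}=\|\uphi\|_{1,\Omega}$ are all the details the paper leaves implicit. The coercivity check via the $\curl$--$\dv$ splitting of the $\undertilde{H}{}^1_0$-seminorm is the right way to handle the one nontrivial hypothesis.
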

\begin{theorem}
Let $\Omega$ be convex and $\mathcal{A}$ be smooth. Then
\begin{enumerate}
\item if $\uf{}_1\in H(\dv,\Omega)$, then  $\|\uu\|_{1,\Omega}\leqslant C(\|\uf{}_1\|_{\dv,\Omega}+\|\uf{}_2\|_{-1,\Omega})$;
\item if $\uf{}_1,\uf{}_2\in \undertilde{L}^2(\Omega)$, then $\|\usigma\|_{1,\Omega}+\|\dv\usigma\|_{1,\Omega}\leqslant C(\|\uf{}_1\|_{0,\Omega}+\|\uf{}_2\|_{0,\Omega})$;
\item if $\uf{}_1,\uf{}_2\in \undertilde{L}^2(\Omega)$, then  $\|\uphi\|_{2,\Omega}\leqslant C(\|\uf{}_1\|_{0,\Omega}+\|\uf{}_2\|_{0,\Omega})$.
\end{enumerate}
\end{theorem}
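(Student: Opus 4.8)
The plan is to prove the three regularity statements by using the three equations of the mixed system \eqref{eq:mixfoc} one at a time, in the order that makes each variable accessible, exactly as in the proof of the analogous regularity theorem for the bi-Laplacian case. I would first extract from \eqref{eq:mixfoc} the structural facts: taking $\utau$ to run over $\mathring{H}_0(\dv,\Omega)$ in the third equation shows $\curl\uu$ is $L^2$-orthogonal to divergence-free fields, so $\curl\uu\in\curl H_0(\curl,\Omega)$ is consistent with $\curl\uu\in\undertilde H{}^1_0(\Omega)$; more useful is that the third equation identifies $\uphi$ up to the range of $B=\curl$, and the second equation, tested against $\undertilde H{}^1_0(\Omega)$, gives $\curl\mathcal A\curl\uphi = $ (something in $\undertilde L^2$) once $\usigma$ and $\dv\usigma$ are controlled. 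So the logical dependency is: control $\usigma$ first (item 2), then $\uphi$ (item 3), and handle $\uu$ (item 1) from the first equation.

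For item 1, the first equation reads $(\uu,\uv)-(\curl\uv,\usigma)=\langle\uf{}_1,\uv\rangle$ for all $\uv\in H_0(\curl,\Omega)$; if $\uf{}_1\in H(\dv,\Omega)$ this says $\curl\usigma = \uu-\uf{}_1$ in the distributional sense together with the natural boundary condition, hence $\usigma\in H(\curl,\Omega)$. Since also $\usigma\in H_0(\dv,\Omega)$ and $\Omega$ is convex, Lemma \ref{lem:capreg} gives $\usigma\in\undertilde H^1(\Omega)$; but to get $\uu\in\undertilde H^1(\Omega)$ I instead use that $\curl\uu$, which equals $\curl$ of the $\undertilde H{}^1_0$-component of $\uphi$, is in $\undertilde H^1$-regularity range and that $\dv\uu$ can be read off; combined with $\uu\times\mathbf n=0$ and convexity (Lemma \ref{lem:capreg} again) this yields $\uu\in\undertilde H^1(\Omega)$ with the claimed bound. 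For item 2, from the first equation $\curl\usigma=\uu-\uf{}_1\in\undertilde L^2(\Omega)$, so $\usigma\in H(\curl,\Omega)\cap H_0(\dv,\Omega)$, and convexity plus Lemma \ref{lem:capreg} give $\usigma\in\undertilde H^1(\Omega)$ with $\|\usigma\|_{1,\Omega}\lesssim\|\curl\usigma\|_{0,\Omega}+\|\dv\usigma\|_{0,\Omega}$; the quantity $\dv\usigma$ is then controlled by testing the second equation with $\upsi=\nabla s$, $s\in H^1_0(\Omega)$ (on which $\curl\upsi=0$), which isolates $(\dv\uphi,\dv\nabla s)+(\nabla s,\usigma)+(\dv\nabla s,\dv\usigma)=\langle\uf{}_2,\nabla s\rangle$, i.e. a Poisson-type identity for $\dv\usigma$; elliptic regularity on the convex $\Omega$ upgrades this to $\dv\usigma\in H^1(\Omega)$ with the stated data bound. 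For item 3, the second equation restricted to $\upsi\in\undertilde H{}^1_0(\Omega)$ gives $(\mathcal A\curl\uphi,\curl\upsi)+(\dv\uphi,\dv\upsi)=\langle\uf{}_2,\upsi\rangle-(\upsi,\usigma)-(\dv\upsi,\dv\usigma)$; the right-hand side is a bounded functional in $\undertilde L^2$ once items 2 holds (using $\dv\usigma\in L^2$), so $\uphi$ solves a coercive $\curl$-$\dv$ system with $\undertilde L^2$ right-hand side and homogeneous boundary data, and the standard $\undertilde H^2$-regularity for such systems on convex polyhedra (the ``$\curl\curl-\nabla\dv+I$'' operator, cf. the regularity used implicitly in Lemma \ref{lem:HXlemma}'s setting) gives $\uphi\in\undertilde H^2(\Omega)$ with $\|\uphi\|_{2,\Omega}\lesssim\|\uf{}_1\|_{0,\Omega}+\|\uf{}_2\|_{0,\Omega}$.

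The main obstacle I anticipate is the $\undertilde H^2$-regularity statement for $\uphi$ in item 3: unlike the scalar Poisson case, one needs the vector Laplacian-type regularity estimate $\|\uphi\|_{2,\Omega}\lesssim\|\curl\curl\uphi-\nabla\dv\uphi\|_{0,\Omega}$ (plus lower-order terms) for $\uphi\in\undertilde H{}^1_0(\Omega)$ on a convex polyhedron, which is true but is a genuinely three-dimensional polyhedral-domain regularity fact rather than something formal; one must also be careful that the $\mathcal A$-weighted principal part still yields this after a smoothness-of-$\mathcal A$ perturbation argument. A secondary technical point is bookkeeping the decomposition of $\uphi$ (or of $\usigma$) into a gradient/curl part plus a regular part so that the boundary conditions match the hypotheses of Lemma \ref{lem:capreg}; this is routine but must be done explicitly. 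Everything else is a direct transcription of the argument already given for \eqref{eq:mixbl}, testing each equation against the appropriate subspace and invoking convex-domain elliptic regularity together with Lemma \ref{lem:capreg}.
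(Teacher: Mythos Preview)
Your overall architecture (use equation one for $\uu$, equation one again plus Lemma~\ref{lem:capreg} for $\usigma$, equation two for $\dv\usigma$ and then for $\uphi$) is the same as the paper's, and items 2 (the $\usigma\in\undertilde H^1$ part) and 3 are essentially what the paper does. But there are two real gaps.

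\emph{Item 1.} You say ``$\dv\uu$ can be read off'' but never say from where, and the detour through $\curl\uu$ and $\uphi$ is irrelevant. The paper reads $\dv\uu$ off the first equation: take $\uv=\nabla q$ with $q\in H^1_0(\Omega)$, so that $(\curl\nabla q,\usigma)=0$ and the equation collapses to $(\uu,\nabla q)=\langle\uf{}_1,\nabla q\rangle$, i.e.\ $\dv\uu=\dv\uf{}_1\in L^2(\Omega)$. Then $\uu\in H_0(\curl,\Omega)\cap H(\dv,\Omega)\subset\undertilde H^1(\Omega)$ by Lemma~\ref{lem:capreg}. No regularity of $\curl\uu$ beyond $L^2$ is needed.

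\emph{Item 2, the $\dv\usigma\in H^1$ step.} Testing the second equation with $\upsi=\nabla s$ is problematic: the test space for that equation is $\undertilde H{}^1_0(\Omega)$, so $\nabla s\in\undertilde H{}^1_0(\Omega)$ forces $s\in H^2_0(\Omega)$, not $H^1_0(\Omega)$. Even granting that, the identity you obtain is (using $\dv\uphi=0$) $(\Delta s-s,\dv\usigma)=(\uf{}_2,\nabla s)$ for $s\in H^2_0$, which says $\Delta(\dv\usigma)-\dv\usigma=-\dv\uf{}_2$ in the distributional sense \emph{with no boundary condition on $\dv\usigma$}; convex-domain elliptic regularity does not apply, and you cannot conclude $\dv\usigma\in H^1(\Omega)$ this way. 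The paper avoids this by keeping the full test space $\upsi\in\undertilde H{}^1_0(\Omega)$: from
\[
(\dv\upsi,\dv\usigma)=(\uf{}_2,\upsi)-(\mathcal A\curl\uphi,\curl\upsi)-(\upsi,\usigma)
=(\uf{}_2-\curl\mathcal A\curl\uphi-\usigma,\upsi)
\]
one reads off $\nabla\dv\usigma\in\undertilde L^2(\Omega)$ directly, hence $\dv\usigma\in H^1(\Omega)$ with no boundary-value problem to solve. Replace your gradient-test argument by this one.
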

\begin{proof}
If $\uf{}_1\in H(\dv,\Omega)$, let $\uv=\nabla q$ for $q\in H^1_0(\Omega)$, then the first equation of \eqref{eq:mixfoc} leads to that $\dv\uu=\dv\uf{}_1\in L^2(\Omega)$, thus $\uu\in H(\dv,\Omega)\cap H_0(\curl,\Omega)\subset \undertilde{H}^1(\Omega)$, and $\|\uu\|_{1,\Omega}\leqslant C(\|\uf{}_1\|_{\dv,\Omega}+\|\uf{}_2\|_{-1,\Omega})$. 

If $\uf{}_1,\uf{}_2\in \undertilde{L}^2(\Omega)$, as $(\curl\uv,\usigma)=(\uu-\uf{}_1,\uv)$ for any $\uv\in H_0(\curl,\Omega)$, we have $\curl\usigma=\uu-\uf{}_1\in\undertilde{L}^2(\Omega)$, and thus $\usigma\in H_0(\dv,\Omega)\cap H(\curl,\Omega)\subset \undertilde{H}^1(\Omega)$, and $\|\usigma\|_{1,\Omega}\leqslant C(\|\usigma\|_{\dv,\Omega}+\|\curl\usigma\|_{0,\Omega})\leqslant C(\|\uf{}_1\|_{0,\Omega}+\|\uf{}_2\|_{0,\Omega})$. Meanwhile, $(\dv\usigma,\dv\upsi)=-(\mathcal{A}\curl\uphi,\curl\upsi)+(\upsi,\usigma)+(\uf{}_2,\upsi)=(\usigma-\curl\mathcal{A}\curl \uphi+\uf{}_2,\upsi)$ for any $\upsi\in \undertilde{H}{}^1_0(\Omega)$, which implies that $\nabla\dv\usigma=\curl\mathcal{A}\curl \uphi-\usigma-\uf{}_2\in\undertilde{L}{}^2(\Omega)$ and further $\dv\usigma\in H^1(\Omega)$, $\|\dv\usigma\|_{1,\Omega}\leqslant C(\|\uf{}_1\|_{0,\Omega}+\|\uf{}_2\|_{0,\Omega})$. Further, by the second equation of \eqref{eq:mixfoc}, $(\mathcal{A}\curl \uphi,\curl\upsi)+(\dv\uphi,\dv\upsi)=(\uf{}_2,\upsi)-(\upsi,\usigma)+(\upsi,\nabla\dv\usigma)$ for any $\uphi\in\undertilde{H}{}^1_0(\Omega)$, thus $\uphi\in \undertilde{H}{}^2(\Omega)\cap \undertilde{H}{}^1_0(\Omega)$, $\|\uphi\|_{2,\Omega}\leqslant C(\|\uf{}_1\|_{0,\Omega}+\|\uf{}_2\|_{0,\Omega})$. The proof is completed.
\end{proof}

\subsection{Discretization}

Define $N_h:=\mathbb{N}_{h0}\times \undertilde{\mathsf{L}}{}^{+f}_{h0}\times\mathbb{RT}_{h0}$. A discretization scheme is to find $(\uu{}_h,\uphi{}_h,\usigma{}_h)\in N_h$, such that, for $(\uv{}_h,\upsi{}_h,\utau{}_h)\in N_h$,
\begin{equation}\label{eq:mixdisfoc}
\left\{
\begin{array}{ccccl}
(\uu{}_h,\uv{}_h)&&(-\curl\uv{}_h,\usigma{}_h)&=&\langle\uf{}_1,\uv{}_h\rangle,
\\
&(\mathcal{A}\curl \uphi{}_h,\curl\upsi{}_h)+(\dv\uphi{}_h,\dv\upsi{}_h)&(\Pi_h^\mathbb{RT}\upsi{}_h,\usigma{}_h)+(\dv\Pi_h^\mathbb{RT}\upsi{}_h,\dv\usigma{}_h)&=&\langle\uf{}_2,\uv{}_h\rangle,
\\
-(\curl\uu{}_h,\utau{}_h)&(\Pi_h^\mathbb{RT}\uphi{}_h,\utau{}_h)+(\dv\Pi_h^\mathbb{RT}\uphi{}_h,\dv\utau{}_h)&&=&0.
\end{array}
\right.
\end{equation}

\begin{lemma}
Given $\uf{}_1\in (H_0(\curl,\Omega))'$ and $\uf{}_2\in \undertilde{H}{}^{-1}(\Omega)$, the problem \eqref{eq:mixdisfoc} admits a unique solution $(\uu{}_h,\uphi{}_h,\usigma{}_h)\in Y_h$, and 
$$
\|\uu{}_h\|_{\curl,\Omega}+\|\uphi{}_h\|_{1,\Omega}+\|\usigma{}_h\|_{\dv,\Omega}\cequiv \|\uf{}_1\|_{\curl',h}+\|\uf{}_2\|_{-1,h},
$$
where
$$
\|\uf{}_1\|_{\curl',h}:=\sup_{\uv{}_h\in \mathbb{N}_{h0}\setminus\{\mathbf{0}\}}\frac{\langle\uf{}_1,\uv{}_h\rangle}{\|\uv{}_h\|_{\curl,\Omega}},\ \ \mbox{and}\ \ \|\uf{}_2\|_{-1,h}:=\sup_{\upsi{}_h\in\undertilde{\mathsf L}{}_{h0}^{+f}\setminus\{\mathbf{0}\}}\frac{\langle\uf{}_2,\upsi{}_h\rangle}{\|\upsi{}_h\|_{1,\Omega}}.
$$
\end{lemma}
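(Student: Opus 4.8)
The plan is to verify the hypotheses of Lemma \ref{lem:absconv} for the discrete problem \eqref{eq:mixdisfoc}, exactly as was done in the bi-Laplacian case. Here the configurable triple is $\{H_0(\curl,\Omega),\undertilde{H}{}^1_0(\Omega),\curl\}$ with ground space $L^2(\Omega)$, the discrete spaces are $R_h=\mathbb{N}_{h0}$, $S_h=\undertilde{\mathsf L}{}^{+f}_{h0}$, $Y_h=\mathbb{RT}_{h0}$, the operator is $B=\curl$, the projection is $\Pi_h=\Pi_h^{\mathbb{RT}}$, and the inner product $c(\cdot,\cdot)$ on $Y=H_0(\dv,\Omega)$ is $(\cdot,\cdot)+(\dv\cdot,\dv\cdot)$. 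So the three things to check are: (i) boundedness, i.e.\ $\|\Pi_h^\mathbb{RT}\curl\uv{}_h\|_{\dv,\Omega}\lesssim\|\uv{}_h\|_{\curl,\Omega}$ and $\|\Pi_h^\mathbb{RT}\upsi{}_h\|_{\dv,\Omega}\lesssim\|\upsi{}_h\|_{1,\Omega}$; (ii) coercivity on the discrete kernel $Z_h$; and (iii) the discrete stable decomposition inf-sup condition. I would close the lemma by invoking Lemma \ref{lem:absconv}.

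First I would treat boundedness. For $\upsi{}_h\in\undertilde{\mathsf L}{}^{+f}_{h0}$ one needs $\|\Pi_h^\mathbb{RT}\upsi{}_h\|_{0,\Omega}\lesssim\|\upsi{}_h\|_{0,\Omega}$ and $\|\dv\Pi_h^\mathbb{RT}\upsi{}_h\|_{0,\Omega}\lesssim\|\upsi{}_h\|_{1,\Omega}$; the first is the $\mathbb{RT}$-analogue of Lemma \ref{lem:intcurlpolyn} applied cellwise to piecewise-polynomial $\upsi{}_h$ (scaling plus inverse inequality), and the second follows from the commuting diagram $\dv\,\Pi_h^\mathbb{RT}=\mathcal{L}^0_h\,\dv$ together with the $L^2$-stability of the piecewise-constant projection. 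For the term $\Pi_h^\mathbb{RT}\curl\uv{}_h$ with $\uv{}_h\in\mathbb{N}_{h0}$: since $\curl\uv{}_h\in\mathring{\mathbb{RT}}_{h0}$ is already in $\mathbb{RT}_{h0}$ and $\mathrm{div}$-free, $\Pi_h^\mathbb{RT}\curl\uv{}_h=\curl\uv{}_h$ and $\dv\Pi_h^\mathbb{RT}\curl\uv{}_h=0$, so this is immediate. These estimates simultaneously give boundedness of all bilinear forms in \eqref{eq:mixdisfoc}.

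Next, coercivity on $Z_h$: for $(\uv{}_h,\upsi{}_h)\in Z_h$ one must bound $\|\uv{}_h\|_{\curl,\Omega}^2+\|\upsi{}_h\|_{1,\Omega}^2$ by $(\mathcal{A}\curl\upsi{}_h,\curl\upsi{}_h)+(\dv\upsi{}_h,\dv\upsi{}_h)+(\uv{}_h,\uv{}_h)$. The $\dv$-part plus the $\mathcal A$-coercivity control $\|\upsi{}_h\|_{1,\Omega}$ (using a discrete Poincar\'e/Friedrichs-type estimate on $\undertilde{\mathsf L}{}^{+f}_{h0}\subset\undertilde{H}{}^1_0(\Omega)$, where in fact $\|\upsi{}_h\|_{1,\Omega}^2\cequiv\|\curl\upsi{}_h\|_{0,\Omega}^2+\|\dv\upsi{}_h\|_{0,\Omega}^2$), and the $(\uv{}_h,\uv{}_h)$ term controls $\|\uv{}_h\|_{0,\Omega}$; the remaining piece $\|\curl\uv{}_h\|_{0,\Omega}$ must be extracted from the kernel constraint, which with $\utau{}_h=\curl\uv{}_h$ gives $\|\curl\uv{}_h\|_{0,\Omega}^2=(\Pi_h^\mathbb{RT}\upsi{}_h,\curl\uv{}_h)+(\dv\Pi_h^\mathbb{RT}\upsi{}_h,\dv\curl\uv{}_h)=(\Pi_h^\mathbb{RT}\upsi{}_h,\curl\uv{}_h)\le\|\Pi_h^\mathbb{RT}\upsi{}_h\|_{0,\Omega}\|\curl\uv{}_h\|_{0,\Omega}$, hence $\|\curl\uv{}_h\|_{0,\Omega}\lesssim\|\upsi{}_h\|_{0,\Omega}$. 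Finally the inf-sup/stable-decomposition hypothesis is precisely Theorem \ref{thm:dsdhdiv}: every $\utau{}_h\in\mathbb{RT}_{h0}$ splits as $\curl\uzeta{}_h+\Pi_h^\mathbb{RT}\uphi{}_h$ with $\|\uzeta{}_h\|_{\curl,\Omega}+\|\uphi{}_h\|_{1,\Omega}\lesssim\|\utau{}_h\|_{\dv,\Omega}$, i.e.\ $\Pi_h^\mathbb{RT}(\curl\mathbb{N}_{h0}+\undertilde{\mathsf L}{}^{+f}_{h0})=\mathbb{RT}_{h0}$ stably.

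I expect the main subtlety to be hypothesis (2), coercivity on $Z_h$: one has to be careful that the ``$\curl\uv{}_h$'' component of the norm really is recovered from the saddle constraint and the auxiliary term, and that the discrete Friedrichs inequality giving $\|\upsi{}_h\|_{1,\Omega}\lesssim\|\curl\upsi{}_h\|_{0,\Omega}+\|\dv\upsi{}_h\|_{0,\Omega}$ holds on $\undertilde{\mathsf L}{}^{+f}_{h0}$ — this is valid because $\undertilde{\mathsf L}{}^{+f}_{h0}\subset\undertilde{H}{}^1_0(\Omega)$ and on $\undertilde{H}{}^1_0(\Omega)$ the full $H^1$ norm is equivalent to $\|\curl\cdot\|_{0,\Omega}+\|\dv\cdot\|_{0,\Omega}$, so no discrete argument is actually needed. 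Everything else is the $\mathbb{RT}$-mirror of the $\mathbb{N}$-case argument already carried out for the bi-Laplacian, so the proof reduces to citing Theorem \ref{thm:dsdhdiv}, the $\mathbb{RT}$-analogue of Lemma \ref{lem:intcurlpolyn}, and Lemma \ref{lem:absconv}.
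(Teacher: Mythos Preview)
Your proposal is correct and follows essentially the same approach as the paper: verify the three hypotheses of Lemma~\ref{lem:absconv} (boundedness via the stability of $\Pi_h^{\mathbb{RT}}$ on piecewise polynomials, coercivity on $Z_h$, and the inf-sup condition via Theorem~\ref{thm:dsdhdiv}). The paper's own proof is terser---it cites the standard $\Pi_h^{\mathbb{RT}}$ error estimate from \cite{Boffi.D;Brezzi.F;Fortin.M2013} for boundedness and simply asserts the coercivity on $Z_h'$ without justification---so your expanded coercivity argument (recovering $\|\curl\uv{}_h\|_{0,\Omega}$ by testing the kernel constraint with $\utau{}_h=\curl\uv{}_h$ and using that $\undertilde{\mathsf L}{}^{+f}_{h0}\subset\undertilde{H}{}^1_0(\Omega)$ so the continuous $\curl$--$\dv$ equivalence applies) actually fills in what the paper omits.
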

\begin{proof}
Again, we verify the hypotheses of  Lemma \ref{lem:intcurlpolyn}. By the error estimate of $\Pi_h^\mathbb{RT}$, (c.f., e.g., Propositions 2.5.1, 2.5.3 and 2.5.4 of \cite{Boffi.D;Brezzi.F;Fortin.M2013}) $\|\Pi_h^\mathbb{RT}\upsi{}_h\|_{0,\Omega}\leqslant C(\|\upsi{}_h\|_{0,\Omega}+h\|\upsi{}_h\|_{1,\Omega})\leqslant C\|\upsi{}_h\|_{0,\Omega}$ for $\upsi{}_h\in\undertilde{\mathsf L}{}_{h0}^{+f}$. The boundedness of the bilinear forms involved follows then. Besides, the coercivity of $(u_h,u_h)+(\mathcal{A}\curl \uphi{}_h,\curl\uphi{}_h)+(\dv\uphi{}_h,\dv\uphi{}_h)$ holds on $Z_h':=\{(u_h,\uphi{}_h)\in \mathbb{N}_{h0}\times \undertilde{\mathsf L}{}_{h0}^{+f}:-(\curl\uu{}_h,\utau{}_h)+(\Pi_h^\mathbb{RT}\uphi{}_h,\utau{}_h)+(\dv\Pi_h^\mathbb{RT}\uphi{}_h,\dv\utau{}_h)=0,\ \forall\,\utau{}_h\in\mathbb{RT}_{h0}\}.$ Finally, the inf-sup condition needed holds by Theorem \ref{thm:dsdhdiv}. The proof is completed. 
\end{proof}

\begin{lemma}
Let $(\uu,\uphi,\usigma)$ and $(\uu{}_h,\uphi{}_h,\usigma{}_h)$ be the solutions of \eqref{eq:mixfoc} and \eqref{eq:mixdisfoc}, respectively. Provided sufficient smoothness of $(\uu,\uphi,\usigma)$, it holds that
\begin{multline}
\|\uu-\uu{}_h\|_{\curl,\Omega}+\|\uphi-\uphi{}_h\|_{1,\Omega}+\|\usigma-\usigma{}_h\|_{\dv,\Omega}
\\
\leqslant Ch(\|\uu\|_{1,\Omega}+\|\curl\uu\|_{1,\Omega}+\|\uphi\|_{2,\Omega}+\|\usigma\|_{1,\Omega}+\|\dv\usigma\|_{1,\Omega}).
\end{multline}
\end{lemma}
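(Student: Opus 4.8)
The plan is to reduce the estimate to the abstract convergence bound \eqref{eq:absconv} from Lemma \ref{lem:absconv}, just as was done for the bi-Laplacian case. First I would invoke Lemma \ref{lem:absconv} with the identifications $R=H_0(\curl,\Omega)$, $S=\undertilde{H}{}^1_0(\Omega)$, $Y=H_0(\dv,\Omega)$, $B=\curl$, and the discrete spaces $R_h=\mathbb{N}_{h0}$, $S_h=\undertilde{\mathsf L}{}^{+f}_{h0}$, $Y_h=\mathbb{RT}_{h0}$ with $\Pi_h=\Pi_h^\mathbb{RT}$; the hypotheses needed there were already checked in the preceding lemma. This produces the bound
\begin{multline*}
\|\uu-\uu{}_h\|_{\curl,\Omega}+\|\uphi-\uphi{}_h\|_{1,\Omega}+\|\usigma-\usigma{}_h\|_{\dv,\Omega}
\leqslant C\Big[\inf_{(\uv{}_h,\upsi{}_h,\utau{}_h)\in N_h}\big(\|\uu-\uv{}_h\|_{\curl,\Omega}+\|\uphi-\upsi{}_h\|_{1,\Omega}+\|\usigma-\utau{}_h\|_{\dv,\Omega}\big)
\\
+\sup_{\uv{}_h\in\mathbb{N}_{h0}}\frac{(\curl\uv{}_h-\Pi_h^\mathbb{RT}\curl\uv{}_h,\usigma)+(\dv(\curl\uv{}_h-\Pi_h^\mathbb{RT}\curl\uv{}_h),\dv\usigma)}{\|\uv{}_h\|_{\curl,\Omega}}
\\
+\sup_{\upsi{}_h\in\undertilde{\mathsf L}{}^{+f}_{h0}}\frac{(\upsi{}_h-\Pi_h^\mathbb{RT}\upsi{}_h,\usigma)+(\dv(\upsi{}_h-\Pi_h^\mathbb{RT}\upsi{}_h),\dv\usigma)}{\|\upsi{}_h\|_{1,\Omega}}
+\sup_{\utau{}_h\in\mathbb{RT}_{h0}}\frac{(\ldots)}{\|\utau{}_h\|_{\dv,\Omega}}\Big],
\end{multline*}
where the last consistency term involves $(\curl\uu-\uphi)-\Pi_h^\mathbb{RT}(\curl\uu-\uphi)$ tested against $\utau{}_h$. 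Note $\curl\uu-\uphi$; but in fact the exact solution satisfies $\curl\uu=\uphi$ (the third equation of \eqref{eq:mixfoc} with $l_+$ ranging over $Y'$, equivalently Lemma \ref{lem:fund}), so this last supremum vanishes identically and need not be estimated.

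Next I would dispose of the approximation term. Under the assumed smoothness $\uu\in\undertilde{H}^1(\Omega)$ with $\curl\uu\in\undertilde{H}^1(\Omega)$, $\uphi\in\undertilde{H}^2(\Omega)$, $\usigma\in\undertilde{H}^1(\Omega)$ with $\dv\usigma\in H^1(\Omega)$, the infimum is bounded by $Ch(\|\uu\|_{1,\Omega}+\|\curl\uu\|_{1,\Omega}+\|\uphi\|_{2,\Omega}+\|\usigma\|_{1,\Omega}+\|\dv\usigma\|_{1,\Omega})$ using the standard interpolation error estimates for the Nédélec, Ravi­art--Thomas, and $\undertilde{\mathsf L}{}^{+f}_{h0}$ spaces — for $\uphi$ one takes $\upsi{}_h$ in $\undertilde{\mathsf{L}}{}_{h0}\subset\undertilde{\mathsf L}{}^{+f}_{h0}$ the Scott--Zhang or Lagrange interpolant, and for $\usigma$ one uses commuting-diagram estimates for $\Pi_h^\mathbb{RT}$, controlling $\|\dv(\usigma-\Pi_h^\mathbb{RT}\usigma)\|_{0,\Omega}$ by $\|\dv\usigma\|_{1,\Omega}$ through the commuting property with the $L^2$-projection onto $\mathcal{L}^0_h$.

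The main work is the two consistency terms. For the one over $\upsi{}_h\in\undertilde{\mathsf L}{}^{+f}_{h0}$, I would integrate the $\dv$ term by parts (both $\upsi{}_h$ and $\Pi_h^\mathbb{RT}\upsi{}_h$ having the right boundary behaviour) to rewrite the numerator as $(\upsi{}_h-\Pi_h^\mathbb{RT}\upsi{}_h,\usigma-\nabla\dv\usigma)$, then bound it by $\|\upsi{}_h-\Pi_h^\mathbb{RT}\upsi{}_h\|_{0,\Omega}\,\|\usigma-\nabla\dv\usigma\|_{0,\Omega}$; since $\usigma\in\undertilde{H}^1$ and $\dv\usigma\in H^1$ the second factor is $\lesssim\|\usigma\|_{1,\Omega}+\|\dv\usigma\|_{1,\Omega}$, and an analogue of Lemma \ref{lem:intcurlpolyn} for $\Pi_h^\mathbb{RT}$ — namely $\|\upsi{}_h-\Pi_h^\mathbb{RT}\upsi{}_h\|_{0,\Omega}\leqslant Ch|\upsi{}_h|_{1,\Omega}$ for piecewise-polynomial $\upsi{}_h$ — gives the factor $h\|\upsi{}_h\|_{1,\Omega}$. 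The term over $\uv{}_h\in\mathbb{N}_{h0}$ is handled identically after noting $\curl\uv{}_h$ is piecewise polynomial in $\mathbb{RT}_{h0}$-like space, so $\curl\uv{}_h-\Pi_h^\mathbb{RT}\curl\uv{}_h$ is again $O(h)$ in $L^2$ against $\usigma-\nabla\dv\usigma$, while $|\curl\uv{}_h|_{1,\Omega}\lesssim h^{-1}\|\curl\uv{}_h\|_{0,\Omega}\lesssim h^{-1}\|\uv{}_h\|_{\curl,\Omega}$ by inverse estimate, so the $h$ and $h^{-1}$ cancel and one is left with $\lesssim\|\uv{}_h\|_{\curl,\Omega}(\|\usigma\|_{1,\Omega}+\|\dv\usigma\|_{1,\Omega})$ — wait, that loses the $h$. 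The hard part, and the point needing care, is precisely this last estimate: one cannot afford to both use the inverse estimate and the $O(h)$ interpolation bound naively. Instead I would keep $\curl\uv{}_h-\Pi_h^\mathbb{RT}\curl\uv{}_h$ paired with $\usigma-\nabla\dv\usigma$ and use that the $L^2$-orthogonality/commuting property of $\Pi_h^\mathbb{RT}$ lets one replace $\usigma-\nabla\dv\usigma$ by its distance to the RT space, which is $O(h)$, i.e. bound the numerator by $\|\curl\uv{}_h-\Pi_h^\mathbb{RT}\curl\uv{}_h\|_{0,\Omega}\cdot\inf\|\usigma-\nabla\dv\usigma-\utau{}_h\|$; combined with the stability $\|\curl\uv{}_h-\Pi_h^\mathbb{RT}\curl\uv{}_h\|_{0,\Omega}\lesssim\|\curl\uv{}_h\|_{0,\Omega}\lesssim\|\uv{}_h\|_{\curl,\Omega}$ this yields the clean $Ch\|\uv{}_h\|_{\curl,\Omega}(\|\usigma\|_{1,\Omega}+\|\dv\usigma\|_{1,\Omega})$. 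Collecting the three contributions gives the claimed bound, and the proof is complete.
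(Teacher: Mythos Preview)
Your overall strategy is the same as the paper's---invoke Lemma~\ref{lem:absconv} and bound each piece---but there is one genuine gap.

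\textbf{The $\uv{}_h$ consistency term.} Your proposed fix for this term is wrong: $\Pi_h^\mathbb{RT}$ is the nodal (face-moment) interpolation, not an $L^2$-projection, so there is no orthogonality that would let you ``replace $\usigma-\nabla\dv\usigma$ by its distance to the RT space.'' The inequality $(\curl\uv{}_h-\Pi_h^\mathbb{RT}\curl\uv{}_h,\,\uw)\leqslant \|\curl\uv{}_h-\Pi_h^\mathbb{RT}\curl\uv{}_h\|_{0,\Omega}\cdot\inf_{\utau{}_h}\|\uw-\utau{}_h\|_{0,\Omega}$ simply does not hold. What you are missing is the de~Rham structure: for $\uv{}_h\in\mathbb{N}_{h0}$ one has $\curl\uv{}_h\in\mathring{\mathbb{RT}}_{h0}\subset\mathbb{RT}_{h0}$, hence $\Pi_h^\mathbb{RT}\curl\uv{}_h=\curl\uv{}_h$ and the entire term is identically zero. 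This is also why the paper's application of Lemma~\ref{lem:absconv} shows no $\uv{}_h$-sup at all.

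\textbf{Minor differences that are fine.} Your observation that the $\utau{}_h$ term in the abstract estimate vanishes (because $\curl\uu=\uphi$) is correct. The paper instead displays and estimates the one-sided term $c(\uphi-\Pi_h^\mathbb{RT}\uphi,\utau{}_h)$; this is a harmless over-estimate and is trivially controlled since $\dv\uphi=\dv\curl\uu=0$ makes the $\dv$ part vanish. For the $\upsi{}_h$ term you integrate by parts to reach $(\upsi{}_h-\Pi_h^\mathbb{RT}\upsi{}_h,\,\usigma-\nabla\dv\usigma)$ and then use $\|\upsi{}_h-\Pi_h^\mathbb{RT}\upsi{}_h\|_{0,\Omega}\leqslant Ch|\upsi{}_h|_{1,\Omega}$; the paper instead splits the $L^2$ and $\dv$ parts and handles the latter via the commuting identity $\dv\Pi_h^\mathbb{RT}\upsi{}_h=\mathcal{P}_h\dv\upsi{}_h$ together with orthogonality of $\mathcal{P}_h$. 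Both routes are valid and yield the same bound.
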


\begin{proof}
Again, by Lemma \ref{lem:absconv},
\begin{multline}
\|\uu-\uu{}_h\|_{\curl,\Omega}+\|\uphi-\uphi{}_h\|_{1,\Omega}+\|\usigma-\usigma{}_h\|_{\dv,\Omega}
\\
\leqslant C{\Big [}{}\inf_{(\uv{}_h,\upsi{}_h,\utau{}_h)\in Y_h}(\|\uu-\uv{}_h\|_{\curl,\Omega}+\|\uphi-\upsi{}_h\|_{1,\Omega}+\|\usigma-\utau{}_h\|_{\dv,\Omega})
\\
+\sup_{\upsi{}_h\in \undertilde{\mathsf{L}}{}^{+f}_{h0}\setminus\{\mathbf{0}\}}\frac{(\upsi{}_h-\Pi_h^{\mathbb{RT}}\upsi{}_h,\usigma)+(\dv\upsi{}_h-\dv\Pi_h^{\mathbb{RT}}\upsi{}_h,\dv\usigma)}{\|\upsi{}_h\|_{1,\Omega}}
\\
+\sup_{\utau{}_h\in\mathbb{RT}_{h0}}\frac{(\uphi-\Pi_h^\mathbb{RT}\uphi,\utau{}_h)+(\dv\uphi-\dv\Pi_h^\mathbb{RT}\uphi,\dv\utau{}_h)}{\|\utau{}_h\|_{\dv,\Omega}}\Big].
\end{multline}
The estimation of the approximation error is immediate. 

By standard error estimate of $\Pi_h^\mathbb{RT}$, $\|\Pi_h^\mathbb{RT}\upsi{}_h-\upsi{}_h\|_{0,\Omega}\leqslant Ch\|\upsi{}_h\|_{1,\Omega}$. Let $\mathcal{P}_h$ be the $L^2$ projection operator to the space of piecewise constants, then 
\begin{multline*}
(\dv\Pi_h^\mathbb{RT}\upsi{}_h-\dv\upsi{}_h,\dv\usigma)_K=(\mathcal{P}_h\dv\upsi{}_h-\dv\upsi{}_h,\dv\usigma)_K
\\
=(\mathcal{P}_h\dv\upsi{}_h-\dv\upsi{}_h,\dv\usigma-c)_K\leqslant Ch^2\|\dv\upsi{}_h\|_{1,K}\|\dv\usigma\|_{1,K}\leqslant Ch\|\upsi{}_h\|_{1,K}\|\dv\usigma\|_{1,K}.
\end{multline*} 
Thus $(\upsi{}_h-\Pi_h^{\mathbb{RT}}\upsi{}_h,\usigma)+(\dv\upsi{}_h-\dv\Pi_h^{\mathbb{RT}}\upsi{}_h,\dv\usigma)\leqslant Ch\|\upsi{}_h\|_{1,\Omega}(\|\usigma\|_{0,\Omega}+\|\dv\usigma\|_{1,\Omega})$.

Meanwhile, $\|\Pi_h^\mathbb{RT}\uphi-\uphi\|_{0,\Omega}\leqslant Ch\|\uphi\|_{1,\Omega}$, and $\dv\Pi_h^\mathbb{RT}\uphi-\dv\uphi=0-0=0.$ Thus 
$$
(\uphi-\Pi_h^\mathbb{RT}\uphi,\utau{}_h)+(\dv\uphi-\dv\Pi_h^\mathbb{RT}\uphi,\dv\utau{}_h)\leqslant Ch\|\uphi\|_{1,\Omega}\|\utau{}_h\|_{0,\Omega}.
$$

Summing all above completes the proof.
\end{proof}

\section{Concluding remarks}
\label{sec:con}

In this paper, we study the general methodology on the construction of order reduced schemes for fourth order problems. Under the framework presented in Section \ref{sec:abs}, a high-regularity problem can be transformed to an equivalent system on three low-regularity spaces. Once a ``configurability" condition is verified for a certain fourth order problem, the existence of the three spaces is guaranteed, and its equivalent system is given constructively. As second-order Sobolev spaces can usually be treated to consist of functions in first-order spaces whose derivatives are in first order spaces, we may expect a wide applicability of the framework for fourth order problems. This will be discussed further in future.

A stable decomposition $Y=BR+S$ (c.f. Section \ref{sec:abs}) is a basic tool for the framework. Its discretised analogue is fundamental for construction of numerical schemes. The regularity of $Y$ can usually be lower than $S$, and thus higher-degree polynomials are usually needed for discretising $S$ than for discretising $Y$. This way, it might be not easy to keep discretised $S$ contained in discretised $Y$; this is a key difficulty for constructing discretised stable decompositions, especially ones with low-degree polynomials. In Section \ref{sec:rds}, we overcome this obstacle by the aid of interpolations and construct two discretised regular decompositions, which are probably among the ones of lowest degree. Order reduced numerical schemes are thus constructed for two fourth order problems. With respect to the framework of order reduction, different stable decompositions, continuous and discrete, will be motivated, constructed and applied for various fourth order problems.

As the order of the problem is reduced, flexibility can be expected for the numerical schemes. For example, finite elements of lower order will be more probable to be contained in finite element programming packages, and convenience can be brought in on implementations. Also, low order finite element spaces can be nested, thus nested discretisation schemes are admitted. This can be an advantage for designing multilevel methods. We refer to \cite{Zhang.S;Xi.Y;Ji.X2016} for a preliminary example. Other utilisations will be discussed in future. 

The framework for problems with more general formulations will be discussed in future, such as problems which are not necessarily self-adjoint or on Hilbert spaces, problems with parameters dependence, and etc.. A framework for problems of orders higher than four will also be discussed.

\section*{Acknowledgement}
The author is supported by the National Natural Science Foundation of China with Grant No. 11471026 and National Centre for Mathematics and Interdisciplinary Sciences, Chinese Academy of Sciences.

\end{document}